\DeclarePairedDelimiter\floor{\lfloor}{\rfloor}
\newtheorem{conjecture}{ Conjecture}[section]
\newtheorem{theorem}[conjecture]{ Theorem}
\newtheorem{lemma}[conjecture]{ Lemma}
\newtheorem{corollary}[conjecture]{ Corollary}
\newtheorem{proposition}[conjecture]{ Proposition}
\theoremstyle{definition}
\newtheorem{remark}[conjecture]{ Remark}
\newtheorem{definition}[conjecture]{ Definition}
\newtheorem{example}[conjecture]{ Example}
\providecommand\max{\text{\rm max}}
\begin{document}

\title{Mutually Abelian-Bordered Binary Words}

\author{Anuran Maity$^1$ \and K. V. Krishna$^1$}

\address
	{$^1$Department of Mathematics\\ 
	 Indian Institute of Technology Guwahati, 
	  Guwahati, India}

      \email{anuran.maity@gmail.com; kvk@iitg.ac.in}


%
%

\keywords{Abelian-bordered words, mutually bordered pairs, lattice paths.}

	\begin{abstract}
		A word is said to be bordered if it contains a nonempty proper prefix that is also a suffix. A pair of words
		$(u, v)$ is said to be mutually bordered if there exists a word that is a nonempty proper prefix of $u$ and suffix of $v$, and there exists a word that is a nonempty proper suffix of $u$ and prefix of $v$. Recently, Gabric studied the number of mutually bordered pairs. In this work, we extend the concept of mutually bordered pairs to abelian setting, and determine the number of  mutually abelian-bordered pairs of binary words using lattice paths. We also find the number of unbordered pairs in this context.
	\end{abstract}
\maketitle

	\section{Introduction}
	
	Two words $x$ and $y$ over an alphabet $\Sigma$ are said to be abelian equivalent, denoted by $x \sim_{\textup{abl}} y$,  if $|x|_a = |y|_a$ for all $a \in \Sigma$, where $|x|_a$ denotes the number of occurrences of $a$ in $x$.  Note that if $x \sim_{\textup{abl}} y$ then $|x| = |y|$, i.e., they are of the same length. For example, $baba \sim_{\textup{abl}} abba$.  The abelian equivalence relation forms the foundation for abelian combinatorics on words. 
	A natural goal of abelian combinatorics on words is to extend the key results from the classical non-commutative setting to the abelian context.
	In recent years, substantial research has been conducted in this direction (see, e.g., \cite{aberkane2004number}, \cite{avgustinovich2016weak},  \cite{carpi1998number}, \cite{cassaigne2011avoiding}, \cite{currie2001avoiding},   \cite{currie2012fixed}, \cite{currie2008long},
	\cite{fici2017abelian}, \cite{fici2019abelian}, \cite{peltomaki2020avoiding},
	\cite{richmond2008counting}). 
	In this paper, we contribute to this line of research by addressing a problem related to the abelian setting of mutually bordered words.
	
	A word $w$ is said to be bordered if it has a nonempty proper prefix which is also its suffix; otherwise, $w$ is called unbordered. A word $w$ is said to have an abelian-border if it has a nonempty proper prefix that is abelian equivalent to a suffix; in this case, $w$ is called an abelian-bordered word. If a word is not abelian-bordered, it is called abelian-unbordered. The notion of bordered as well as abelian-bordered words have been extensively studied in the literature (see, e.g., \cite{charlier2016abelian}, \cite{christodoulakis2014abelian}, \cite{ehrenfeucht1979periodicity}, \cite{harju2004border}, \cite{harju2004periodicity},  \cite{holub2017fully}, \cite{nielsen1973note}, \cite{rampersad2013number}). In particular, Nielsen \cite{nielsen1973note} analyzed the number of length $n$ unbordered words.  Inspired by the applications in digital communication, in  \cite{gabric2022mutual},  Gabric extended the results of Nielsen to a generalized notion called mutually (un)bordered pairs of words.
	A pair of words $(u, v)$ is said to be mutually bordered if there exists a word that is a nonempty proper prefix of $u$ and suffix of $v$, and there exists a word that is a nonempty proper suffix of $u$ and prefix of $v$. In this paper, we extend the work of Gabric to the abelian setting. For the concepts and notations that are not defined in this paper, one may refer to \cite{lothaire}. 
	
	\begin{definition}
		We say a pair of words $(x, y)$ is an internal abelian-border of $(u, v)$  
		if $x$ is a nonempty proper suffix of $u$ and  $y$ is a proper prefix of $v$ such that $x \sim_{\textup{abl}} y$. Similarly, we say the  pair $(x, y)$ is an external abelian-border of $(u, v)$ 
		if $x$ is a nonempty proper prefix of $u$ and  $y$ is a proper suffix of $v$ such that $x \sim_{\textup{abl}} y$. A pair of words $(u, v)$ is said to be mutually abelian-bordered if
		$(u, v)$ has both internal abelian-border and external abelian-border. 
	\end{definition}
	  	
	\begin{definition}
		If a pair of words $(u, v)$ has neither an internal abelian-border nor an external abelian-border, then $(u, v)$ is said to be mutually abelian-unbordered pair of words. A pair of words $(u, v)$ is said to be internal abelian-bordered if $(u, v)$ has an internal abelian-border but does not have an external abelian-border.
		Similarly, a pair $(u, v)$ is said to be  external abelian-bordered if $(u, v)$ has an external abelian-border but does not have an internal abelian-border.
	\end{definition}
	
	\begin{example}
		The pair $(a abab, aabb a)$ is mutually abelian-bordered,  as $(a, a)$ and $(abab, aabb)$ are its external abelian- and internal abelian-borders, respectively.
		The pair $(aabb, abbb)$ is internal abelian-bordered, as $(abb, abb)$ is its internal abelian-border and it has no external abelian-border. 
		The pair $(abbb, aabb)$ is external abelian-bordered, as  $(abb, abb)$ is its external abelian-border and it has no internal abelian-border. Further, the pair
		$(aaa, bbb)$ is mutually abelian-unbordered as it has no internal and external abelian-borders.
	\end{example}

	 We write $\textup{sb}(u, v)$ to denote the shortest internal abelian-border of $(u, v)$. Further, if $\textup{sb}(u, v) = (x, y)$, then we use $\textup{lsb}(u, v)$ to denote $|x|$, the length of shortest internal abelian-border of $(u, v)$. Note that $(x, y)$ is an internal abelian-border of $(u, v)$ if and only if $(y, x)$ is an external abelian-border of $(v, u)$. 
	
	\begin{remark}
		\begin{enumerate}
			\item $\textup{sb}(v, u)$ is the shortest external abelian-border of $(u, v)$.
			\item A pair $(u, v)$ does not have an internal abelian-border iff $\textup{sb}(u, v) = (\lambda, \lambda)$.
			\item For $u, v \in \Sigma^n$, if $\textup{lsb}(u, v)=i$ and $\textup{lsb}(v, u)=j$, then $i\leq n-1$, $j \leq n-1$ and $i+j \leq 2n-2$.
			\item If $\textup{sb}(u, v)=(x, y)$, then $(x, y)$ is mutually abelian-unbordered.
			\item If $(x, y)$ is the shortest external abelian-border of $(u, v)$, then $(x, y)$ is mutually abelian-unbordered.
		\end{enumerate}
	\end{remark}
	
	In this paper, we consider $\Sigma = \{a, b\}$ and if a pair of equal length words over $\Sigma^*$ is mutually abelian-bordered, in short, we refer it as an MAB pair. If the pair is mutually abelian-unbordered, then it is referred as an MAU pair. In this work, we compute the number of MAB as well as MAU pairs. 

    \section{Conputing MAB pairs}
	
	The number of MAB pairs $(u, v)$ with $|u| = |v| = n$ is denoted by $\mathcal{M}(n)$. It is evident that $\mathcal{M}(1)=0$.
	We compute $\mathcal{M}(n)$ for $n \geq 2$. For an MAB pair $(u, v)$, we write $i = \textup{lsb}(u, v)$ and $j = \textup{lsb}(v, u)$, and both are always positive. 
	If the borders are disjoint in the words, i.e., $i + j \leq n$, we denote the corresponding number with $\mathcal{M}^{\textup{d}}(n)$. 
	Similarly, if the borders are overlapping in the words, i.e., $i + j > n$, then the corresponding number is denoted by  $\mathcal{M}^{\textup{o}}(n)$, so that $\mathcal{M}(n) = \mathcal{M}^{\textup{d}}(n) + \mathcal{M}^{\textup{o}}(n)$. We compute $\mathcal{M}^{\textup{d}}(n)$ and $\mathcal{M}^{\textup{o}}(n)$ in sections \ref{subsection1} and \ref{subsection2}, respectively. 
	
	\subsection{Computing $\mathcal{M}^{\textup{d}}(n)$}\label{subsection1}
	
	We first discuss the structure of MAB pairs $(u, v)$ such that $i + j \leq n$ in the following lemma. We omit its proof as it is straightforward from the definitions.  
	
	\begin{lemma}\label{less1}
		For an MAB pair $(u, v)$ with $|u|=|v|=n \geq 2$, $i + j \leq n$ iff $u= \alpha x \beta$ and $v= \beta' x' \alpha'$ for some $\alpha, \beta, \beta', \alpha' \in \Sigma^+$ and $x, x' \in \Sigma^*$ with $\beta \sim_{\textup{abl}} \beta'$, $\alpha \sim_{\textup{abl}} \alpha'$, $|\alpha|=j$, $|\beta|=i$, and both $(\beta, \beta')$ and $(\alpha, \alpha')$ are MAU pairs. 
	\end{lemma}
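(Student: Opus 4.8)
The plan is to prove both directions directly from the definitions, exploiting the hypothesis $i+j\le n$ to guarantee that the internal and external abelian-borders occupy disjoint regions of $u$ and of $v$. For the forward direction, suppose $(u,v)$ is an MAB pair with $\textup{lsb}(u,v)=i$ and $\textup{lsb}(v,u)=j$. By definition of $\textup{sb}(u,v)$, the shortest internal abelian-border is a pair $(\beta,\beta')$ where $\beta$ is the length-$i$ suffix of $u$, $\beta'$ is a length-$i$ prefix of $v$, and $\beta\sim_{\textup{abl}}\beta'$; by Remark 1.4(4), $(\beta,\beta')$ is an MAU pair. Similarly, by Remark 1.4(1), the shortest external abelian-border of $(u,v)$ is $\textup{sb}(v,u)=(\alpha,\alpha')$ where $\alpha$ is the length-$j$ prefix of $u$, $\alpha'$ is a length-$j$ suffix of $v$, $\alpha\sim_{\textup{abl}}\alpha'$, and $(\alpha,\alpha')$ is an MAU pair by Remark 1.4(5). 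Since $i+j\le n$, the prefix $\alpha$ of length $j$ and the suffix $\beta$ of length $i$ do not overlap in $u$, so we may write $u=\alpha x\beta$ with $x\in\Sigma^*$ of length $n-i-j\ge 0$; likewise $v=\beta' x'\alpha'$ with $|x'|=n-i-j$. Finally $\alpha,\beta,\alpha',\beta'\in\Sigma^+$ because $i,j\ge 1$, and $|\alpha|=|\alpha'|=j$, $|\beta|=|\beta'|=i$, as required.

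For the converse, assume $u=\alpha x\beta$, $v=\beta' x'\alpha'$ with the stated abelian-equivalences, length conditions, and MAU hypotheses. Then $\beta$ is a nonempty suffix of $u$, it is proper since $|\beta|=i\le n-1$ (as $\alpha$ contributes at least one letter), and $\beta'$ is a proper prefix of $v$ with $\beta\sim_{\textup{abl}}\beta'$; hence $(\beta,\beta')$ is an internal abelian-border of $(u,v)$. Symmetrically $(\alpha,\alpha')$ is an external abelian-border, so $(u,v)$ is an MAB pair. It remains to check that the lengths of the \emph{shortest} internal and external abelian-borders are exactly $i$ and $j$ and that they are disjoint. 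Because $(\beta,\beta')$ is an MAU pair, it has no internal abelian-border, which I would argue forces $(\beta,\beta')$ to be the shortest internal abelian-border of $(u,v)$: any strictly shorter internal abelian-border $(x_0,y_0)$ of $(u,v)$ would have $x_0$ a suffix of $\beta$ and $y_0$ a prefix of $\beta'$ (using $|x_0|<i\le$ the relevant lengths), making $(x_0,y_0)$ an internal abelian-border of $(\beta,\beta')$ — a contradiction. Thus $\textup{lsb}(u,v)=i$, and by the same reasoning applied to $(\alpha,\alpha')$ via Remark 1.4(1), $\textup{lsb}(v,u)=j$; since $|u|=|\alpha|+|x|+|\beta|=j+(n-i-j)+i=n$ forces $|x|=n-i-j\ge 0$, we get $i+j\le n$, so the borders are disjoint.

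The step I expect to be the main obstacle is the minimality argument in the converse: showing that the exhibited borders $(\beta,\beta')$ and $(\alpha,\alpha')$ are genuinely the \emph{shortest} ones, rather than merely \emph{some} abelian-borders. This requires carefully tracking how a hypothetical shorter internal abelian-border of $(u,v)$ sits inside the suffix $\beta$ of $u$ and the prefix $\beta'$ of $v$ — which is where the MAU hypothesis on $(\beta,\beta')$ is used — and similarly for $(\alpha,\alpha')$ on the external side. Once that bookkeeping is in place, the equivalence is immediate, which is presumably why the authors describe the proof as straightforward and omit it; I would include the short minimality paragraph for completeness.
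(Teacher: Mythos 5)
Your proof is correct and is precisely the straightforward definitional argument the paper alludes to when it omits the proof of Lemma \ref{less1}: extract the shortest internal and external abelian-borders via Remark 1.4, use $i+j\le n$ to see they occupy disjoint factors, and invoke Remark 1.4(4)--(5) for the MAU claims. One small observation: since $i$ and $j$ are already fixed as $\textup{lsb}(u,v)$ and $\textup{lsb}(v,u)$ of the given MAB pair and the decomposition stipulates $|\alpha|=j$, $|\beta|=i$, the converse follows from the length count $n=|\alpha|+|x|+|\beta|=j+|x|+i\ge i+j$ alone, so your minimality paragraph is not strictly needed for the stated equivalence --- but it is harmless and in fact records the useful point (implicitly relied on in Theorem \ref{mdn}) that the decomposition determines $i$ and $j$.
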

	

	We recall the following result on the number of binary words of length $m$ with the shortest abelian-border of length $k$, denoted by
	$D(m, k)$.
	\begin{theorem}[\cite{blanchet2022dyck}]\label{numthe}
		For $m \ge 2$, $D(m, k) =\frac{1}{k}{2k-2 \choose k-1}  2^{m-2k+1}$.
	\end{theorem}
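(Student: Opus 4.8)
The plan is to encode each word counted by $D(m,k)$ as a weighted lattice path of length $k$ together with an arbitrary ``middle'' word, and then to count the paths. Throughout one assumes $1\le k$ and $2k\le m$: if $2k>m$ then $D(m,k)=0$, because an abelian-border of length $k>m/2$ forces the length-$k$ prefix and suffix to overlap, and cancelling their common central factor leaves an abelian-border of length $m-k<k$, contradicting minimality (when $2k=m$ the middle word is empty, consistent with the factor $2^{m-2k}=1$).

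Fix $w=w_1\cdots w_m$ with $2k\le m$, so the index sets $\{1,\dots,k\}$ and $\{m-k+1,\dots,m\}$ are disjoint and, for each $\ell\le k$, the length-$\ell$ prefix and length-$\ell$ suffix are disjoint proper factors. Set $h_\ell=|w_1\cdots w_\ell|_a-|w_{m-\ell+1}\cdots w_m|_a$ for $0\le\ell\le k$. Since two equal-length binary words are abelian equivalent iff they have the same number of $a$'s, the condition that $w$ has shortest abelian-border exactly $k$ is precisely $h_0=h_k=0$ and $h_\ell\ne 0$ for $1\le\ell\le k-1$. The increment $h_\ell-h_{\ell-1}$ depends only on $(w_\ell,w_{m-\ell+1})$, being $+1$ for $(a,b)$, $-1$ for $(b,a)$, and $0$ for each of $(a,a)$ and $(b,b)$. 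Hence the data $(w_\ell,w_{m-\ell+1})_{\ell=1}^{k}$ is the same as a lattice path of length $k$ from $0$ to $0$ with steps in $\{+1,0,-1\}$, nonzero at every interior time, with a $2$-coloring of each level step (the colors recording $(a,a)$ versus $(b,b)$); and $w_{k+1}\cdots w_{m-k}$ is unconstrained, contributing $2^{m-2k}$. Thus $D(m,k)=N_k\,2^{m-2k}$, where $N_k$ counts these $2$-colored paths.

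To compute $N_k$: negating all $\pm1$ steps (equivalently, swapping $a\leftrightarrow b$ in the suffix positions) is a weight-preserving involution that swaps the paths staying strictly positive in the interior with those staying strictly negative, and since a unit-step integer path cannot cross $0$ there are no other possibilities; so $N_k=2N_k^{+}$, where $N_k^{+}$ counts length-$k$ paths from $0$ to $0$ staying strictly positive in the interior. For $k=1$ this is read as $N_1^{+}=1$ (the lone level step), so $N_1=2=2C_0$. For $k\ge 2$ such a path opens with an up-step and closes with a down-step, and removing these leaves a $2$-colored Motzkin path of length $k-2$ on the nonnegative integers (unit up/down steps, weight-$2$ level steps). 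The number of the latter is the Catalan number $C_{k-1}=\tfrac1k\binom{2k-2}{k-1}$; this is where the Dyck-path description of \cite{blanchet2022dyck} is invoked, and it follows either from the functional equation $M(x)=1+2xM(x)+x^2M(x)^2$ for the corresponding generating function, which simplifies to $M(x)=(C(x)-1)/x$ for the Catalan series $C$, or from an explicit bijection with Dyck paths of semilength $k-1$. Hence $N_k=2C_{k-1}$ for all $k\ge1$, and $D(m,k)=2C_{k-1}\,2^{m-2k}=\tfrac1k\binom{2k-2}{k-1}2^{m-2k+1}$.

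The routine parts are the dictionary between the abelian-border condition and the lattice-path condition and the disjointness bookkeeping; the step I would treat most carefully — and the real content, already present in \cite{blanchet2022dyck} — is the identification $N_k^{+}=C_{k-1}$, that is, making the Dyck-path bijection precise.
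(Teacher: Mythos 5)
The paper does not prove this statement at all: Theorem \ref{numthe} is quoted verbatim from \cite{blanchet2022dyck} as a known result, so there is no internal proof to compare against. Your argument is a correct, self-contained proof, and it runs along the same lines as the lattice-path/Dyck-path approach of the cited source: the reduction of the shortest-abelian-border condition to the walk $h_\ell=|w_1\cdots w_\ell|_a-|w_{m-\ell+1}\cdots w_m|_a$ with $h_0=h_k=0$ and $h_\ell\neq 0$ in the interior is right, the free middle factor correctly contributes $2^{m-2k}$, and the identification of the interior-avoiding bicolored Motzkin loops with $2C_{k-1}$ (via stripping the forced first up-step and last down-step and using that bicolored Motzkin paths of length $n$ are counted by $C_{n+1}$) is a standard and valid computation. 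You also correctly supply the hypothesis $2k\le m$ that the quoted statement leaves implicit, together with the cancellation argument showing $D(m,k)=0$ otherwise; this is worth making explicit since the displayed formula is nonzero for some $2k>m$. Two minor quibbles: the parenthetical ``swapping $a\leftrightarrow b$ in the suffix positions'' does not actually negate the increments (the pair $(a,b)$ would become $(a,a)$, a level step); the correct word-level involution is to complement both coordinates of each pair, or to swap $w_\ell$ with $w_{m-\ell+1}$ --- but since you only need the path-level sign-flip involution, this does not affect the count. Also, the $k=1$ base case is handled by fiat rather than by the decomposition, which is fine but could be stated as the vacuous case of the interior condition.
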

	
	\begin{theorem}\label{mdn}
		For $n \geq 2$, 
		$\displaystyle \mathcal{M}^{\textup{d}}(n) = \sum_{i=1}^{n-1} \sum_{j=1}^{n-i} D(2n-2i, j) D(2i, i).$
	\end{theorem}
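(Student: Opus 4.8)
The plan is to factor each qualifying pair into two independent pieces, one counted by $D(2i,i)$ and one counted by $\sum_{j}D(2n-2i,j)$. For an MAB pair $(u,v)$ with $|u|=|v|=n$, $i=\textup{lsb}(u,v)$, $j=\textup{lsb}(v,u)$ and $i+j\le n$, write $u=\alpha x\beta$, $v=\beta'x'\alpha'$ as in Lemma~\ref{less1} and associate to it
\[
  P=\beta\beta'\ \ (\text{length }2i), \qquad W=(\alpha x)(x'\alpha')\ \ (\text{length }2n-2i),
\]
so $P$ concatenates the last $i$ letters of $u$ with the first $i$ letters of $v$, and $W$ concatenates the first $n-i$ letters of $u$ with the last $n-i$ letters of $v$. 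Splitting $W$ and $P$ in the middle recovers $u$ and $v$, so it suffices to describe which $(W,P)$ occur and count those.

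The structural input I would prove first is: \emph{if $\beta\sim_{\textup{abl}}\beta'$ with $|\beta|=|\beta'|$ and $(\beta,\beta')$ has an internal abelian-border, then it has an external abelian-border}. Indeed, if $\beta=\beta_1\beta_2$, $\beta'=\beta_1'\beta_2'$ with $\beta_2\sim_{\textup{abl}}\beta_1'$ and $0<|\beta_2|<|\beta|$, then cancelling letter counts in $\beta\sim_{\textup{abl}}\beta'$ yields $\beta_1\sim_{\textup{abl}}\beta_2'$, an external abelian-border. Hence, for $\beta\sim_{\textup{abl}}\beta'$ of length $i$, the pair $(\beta,\beta')$ is MAU iff it has no external abelian-border iff the length-$2i$ word $\beta\beta'$ has shortest abelian-border exactly $i$: the abelian-borders of $\beta\beta'$ of length $k<i$ are exactly the external abelian-borders of $(\beta,\beta')$ of length $k$, while $\beta\beta'$ always has one of length $i$. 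I would also use the dual elementary fact that \emph{every abelian-bordered word of even length $2\ell$ has shortest abelian-border at most $\ell$}, since an abelian-border of length $k$ with $\ell<k<2\ell$ forces, again by cancelling letter counts of the shared portion, an abelian-border of length $2\ell-k<\ell$.

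Next I would check the correspondence in both directions. Forward: Lemma~\ref{less1} gives $\beta\sim_{\textup{abl}}\beta'$, $\alpha\sim_{\textup{abl}}\alpha'$, with $(\beta,\beta')$, $(\alpha,\alpha')$ MAU; thus $P=\beta\beta'$ has shortest abelian-border $i$, and $W$ has shortest abelian-border $j$ — it has one of length $j$ because (using $j\le n-i$) $\alpha,\alpha'$ are the length-$j$ prefix and suffix of $W$ and $\alpha\sim_{\textup{abl}}\alpha'$, and a shorter abelian-border of $W$ would restrict to an external abelian-border of $(u,v)$ of length $<j$, contradicting $j=\textup{lsb}(v,u)$. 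Backward: given $i\in\{1,\dots,n-1\}$, an abelian-bordered word $W$ of length $2n-2i$ (so its shortest abelian-border $j$ satisfies $j\le n-i$), and a word $P$ of length $2i$ with shortest abelian-border $i$, split $W=u'v'$, $P=\beta\beta'$ in the middle and set $u=u'\beta$, $v=\beta'v'$; then $\beta\sim_{\textup{abl}}\beta'$ gives an internal abelian-border of length $i$, the length-$j$ prefix/suffix of $W$ give an external abelian-border of length $j$ (so $(u,v)$ is MAB), $(\beta,\beta')$ MAU forces $\textup{lsb}(u,v)=i$, $W$ having no shorter abelian-border forces $\textup{lsb}(v,u)=j$, and $i+j\le i+(n-i)=n$. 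Tracking middle splits shows the two maps are inverse. Finally, for each $i$ the admissible $(W,P)$ number $\big(\sum_{j=1}^{n-i}D(2n-2i,j)\big)D(2i,i)$, the inner sum counting abelian-bordered words of length $2n-2i$ by the dual fact; as $i,j\ge1$ and $i+j\le n$ force $1\le i\le n-1$, summing over $i$ gives $\mathcal{M}^{\textup{d}}(n)=\sum_{i=1}^{n-1}\sum_{j=1}^{n-i}D(2n-2i,j)\,D(2i,i)$.

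The main obstacle is certifying that the backward map lands exactly in the target set: that $\textup{lsb}(u,v)$ is $i$ rather than something smaller (this is where ``internal abelian-border $+$ equal abelian content $\Rightarrow$ external abelian-border'' is indispensable), and that the range $\{1,\dots,n-i\}$ of $j$ coincides with ``$W$ abelian-bordered'' (the dual fact). The rest is routine manipulation of prefixes, suffixes and letter counts.
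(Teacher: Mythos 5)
Your proposal is correct and follows essentially the same route as the paper: the same decomposition of $uv$ into $\beta\beta'$ (counted by $D(2i,i)$) and $\alpha x x'\alpha'$ (counted by $\sum_{j}D(2n-2i,j)$), with the product summed over $i$. You merely supply details the paper leaves implicit — the equivalence between ``$(\beta,\beta')$ MAU'' and ``$\beta\beta'$ has shortest abelian-border exactly $i$,'' the bound $j\le n-i$ for even-length abelian-bordered words, and the verification that the correspondence is a bijection — all of which check out.
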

	
	\begin{proof}
		Let $(u, v)$ be an MAB pair with $i + j \leq n$. In view of Lemma \ref{less1}, $uv= \alpha x \beta \beta' x' \alpha'$ and note that $\beta \beta'$ is an abelian-bordered word with the shortest abelian-border of length $|\beta|$. The number of such words in $uv$ is $D(2i, i)$. Also, $\alpha x x' \alpha'$ is an abelian-bordered word with the shortest abelian-border of length $|\alpha|$ in $uv$. Now the number of words of the form  $\alpha x x' \alpha'$ is $D(2n-2i, j)$. Hence, the total number of desired pairs $(u, v)$ is $\displaystyle \sum_{i=1}^{n-1} \sum_{j=1}^{n-i} D(2n-2i, j) D(2i, i)$. 
	\end{proof}
	We have verified Theorem \ref{mdn} for some initial values through a computer program, and the results are given in Table \ref{tab:sample_table}. 
	
	
	\begin{table}[h]
		\centering
		\begin{tabular}{|c|c|c|c|c|c|c|c|c|c|c|c|c|}
			\hline
			$n$ & 1 & 2 & 3 & 4 & 5 & 6 & 7 & 8 & 9 & 10 & 11 & 12 \\
			\hline
			$\mathcal{M}^{\textup{d}}(n)$ & 0 & 4 & 24 & 116 & 520 & 2248 & 9520 & 39796 & 164904 & 679064 & 2783440 & 11368904   \\
			\hline
		\end{tabular}
		\caption{Values of $\mathcal{M}^{\textup{d}}(n)$ for $1 \leq n \leq 12$ }
		\label{tab:sample_table}
	\end{table}
	

	\subsection{ Computing $\mathcal{M}^{\textup{o}}(n)$}\label{subsection2}

	We first present a straightforward property on the structure of MAB pairs $(u, v)$ such that $i + j > n$ in the following lemma.
	

	\begin{lemma}\label{gret}
		For an MAB pair $(u, v)$ with $|u|=|v|=n \geq 3$, $i+j > n$ iff $u= {\alpha} {\gamma} {\beta}$ and $v= {\beta'} {\gamma'} {\alpha'} $ for some ${\alpha}, {\gamma}, {\beta}, {\beta'}, {\gamma'}, {\alpha'}  \in \Sigma^+$  with ${\alpha} {\gamma} \sim_{\textup{abl}} {\gamma'} {\alpha'}$, ${\gamma} {\beta} \sim_{\textup{abl}} {\beta'} {\gamma'}$, $|{\alpha} {\gamma} |=j$, $|{\gamma} {\beta} |=i$, and both $({\gamma} {\beta}, {\beta'} {\gamma'})$ and $({\alpha} {\gamma}, {\gamma'}{\alpha'})$ are MAU pairs.
	\end{lemma}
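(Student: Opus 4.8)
The plan is to prove Lemma \ref{gret} by unwinding the definitions of internal and external abelian-borders in the overlapping regime $i+j>n$, in close parallel with the disjoint case handled in Lemma \ref{less1}, but tracking carefully how the two borders overlap inside $u$ and inside $v$. First I would set up notation: since $i=\textup{lsb}(u,v)$ is the length of the shortest internal abelian-border, there is a decomposition $u=\rho_1\sigma$ with $|\sigma|=i$ and $\sigma\sim_{\textup{abl}} v[1..i]$; since $j=\textup{lsb}(v,u)$ (which by the Remark equals the length of the shortest external abelian-border of $(u,v)$), there is a decomposition $u=\tau\rho_2$ with $|\tau|=j$ and $\tau\sim_{\textup{abl}} v[n-j+1..n]$. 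When $i+j>n$ the suffix of length $i$ and the prefix of length $j$ of $u$ overlap in a block of length $i+j-n$; call this overlap block $\gamma$, so that $u=\alpha\gamma\beta$ where $|\alpha|=j-(i+j-n)=n-i$, $|\gamma|=i+j-n$, $|\beta|=n-j$. Thus $\alpha\gamma$ is the length-$j$ prefix and $\gamma\beta$ is the length-$i$ suffix. By the same reasoning applied to $v$, write $v=\beta'\gamma'\alpha'$ with $|\beta'|=n-i$, $|\gamma'|=i+j-n$, $|\alpha'|=n-j$, so that $\gamma'\alpha'$ is the length-$j$ suffix of $v$ and $\beta'\gamma'$ is the length-$i$ prefix of $v$. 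All six factors are nonempty because $1\le i,j\le n-1$ forces $i+j-n\ge 1$ only when $i+j>n$ — which is exactly our hypothesis — and $n-i,n-j\ge 1$.

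Next I would translate the border conditions into abelian equivalences of these factors. The internal abelian-border condition says the length-$i$ suffix of $u$ is abelian equivalent to the length-$i$ prefix of $v$, i.e.\ $\gamma\beta\sim_{\textup{abl}}\beta'\gamma'$. The external abelian-border condition says the length-$j$ prefix of $u$ is abelian equivalent to the length-$j$ suffix of $v$, i.e.\ $\alpha\gamma\sim_{\textup{abl}}\gamma'\alpha'$. For the MAU claims: by Remark (4), $\textup{sb}(u,v)=(\gamma\beta,\beta'\gamma')$ is itself a mutually abelian-unbordered pair, and by Remark (5) the shortest external abelian-border $(\alpha\gamma,\gamma'\alpha')$ is mutually abelian-unbordered as well. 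This gives one direction. For the converse, suppose $u=\alpha\gamma\beta$, $v=\beta'\gamma'\alpha'$ with the stated length constraints, abelian equivalences, and MAU conditions. Then $(\gamma\beta,\beta'\gamma')$ is an internal abelian-border of $(u,v)$ of length $i$ and $(\alpha\gamma,\gamma'\alpha')$ is an external abelian-border of length $j$, so $(u,v)$ is MAB; the MAU hypotheses on these two pairs guarantee that no shorter internal (resp.\ external) abelian-border exists, because any strictly shorter internal abelian-border of $(u,v)$ would restrict to an internal abelian-border of the pair $(\gamma\beta,\beta'\gamma')$, contradicting that it is MAU — and similarly for the external side — so in fact $\textup{lsb}(u,v)=i$ and $\textup{lsb}(v,u)=j$, and $i+j>n$ because $|\gamma|=i+j-n\ge 1$.

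The main obstacle I anticipate is the converse direction's minimality argument: one must argue that a shorter internal abelian-border of $(u,v)$, say of length $i'<i$, genuinely ``sits inside'' the factor $\gamma\beta$ on the $u$-side and $\beta'\gamma'$ on the $v$-side so that it contradicts the MAU hypothesis on $(\gamma\beta,\beta'\gamma')$. This is automatic when $i'\le i$ because the length-$i'$ suffix of $u$ is the length-$i'$ suffix of $\gamma\beta$ and the length-$i'$ prefix of $v$ is the length-$i'$ prefix of $\beta'\gamma'$; the only subtlety is that we need $i'$ to be a proper length, i.e.\ $i'<|\gamma\beta|=i$, which holds, and we need the prefix of $v$ of length $i'$ to be a \emph{proper} prefix of $\beta'\gamma'$, which again holds since $i'<i$. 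A symmetric remark handles the external side. Once this is laid out, the lemma follows, and as the excerpt notes, the statement is largely a matter of bookkeeping; I would therefore present the overlap decomposition explicitly (perhaps with a one-line picture describing the lengths $n-i$, $i+j-n$, $n-j$) and then state that the equivalences and MAU conditions are immediate from Remarks (4) and (5) together with the definitions, keeping the written proof short.
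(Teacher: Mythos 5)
The paper gives no proof of Lemma \ref{gret} at all --- it is introduced as ``a straightforward property'' and left to the reader, exactly as Lemma \ref{less1} is. Your write-up is precisely the intended unwinding of the definitions (overlap decomposition, the two abelian equivalences, Remark items (4) and (5) for the MAU claims, and the restriction argument for minimality in the converse), and the logic is sound. One slip to fix: you state $|\beta'|=n-i$ and $|\alpha'|=n-j$, which contradicts your own (correct) identification of $\beta'\gamma'$ as the length-$i$ prefix and $\gamma'\alpha'$ as the length-$j$ suffix of $v$; the correct lengths are $|\beta'|=i-(i+j-n)=n-j$ and $|\alpha'|=j-(i+j-n)=n-i$, matching the paper's convention $|\alpha|=|\alpha'|=n-i$, $|\beta|=|\beta'|=n-j$. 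Since everything downstream in your argument uses the prefix/suffix identifications rather than these numerical values, nothing else breaks, but the stated lengths as written are wrong whenever $i\neq j$.
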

	
	To calculate $\mathcal{M}^{\textup{o}}(n)$, we use the relationship between binary words and the lattice paths.
	A \textit{lattice path} is a sequence of one unit steps between two points in $\mathds{Z}^2$ such that each step is towards right (east) or up (north). We depict lattice paths in $XY$-plane by considering the lattice points, i.e., points with integer coordinates. Representing each right move by one letter and a move upward by another letter, and vice versa, we can see that there is a bijection between lattice paths and binary words. Fig. \ref{entrypre}  gives an example of lattice paths and corresponding words. We use the following notations throughout the text.

	
	\begin{enumerate}
		\item For a binary word $w$, we denote its corresponding lattice path which starts at a point $A$ by $p_A(w)$. Given a lattice path $p$ starting from a point $A$, the corresponding binary word  is denoted by $w_A(p)$; if the end point $B$ is also specified, then the word is denoted by $w_A^B(p)$. 
		
		\item Let $p$ and $q$ be two lattice paths.  The set of all lattice points common to them is denoted by $p \cap q$. If $q$ starts where $p$ ends, then $pq$ denotes the path which starts at the starting point of $p$ and ends at the ending point of $q$.   
		
		\item If $A$ is point in $XY$-plane, then $X(A)$ and $Y(A)$ denote the $X$- and $Y$-coordinates of $A$, respectively.

		
		
		\item For $A = (X_1, Y_1), B=(X_2, Y_2) \in \mathds{Z}^2$, the number of lattice paths from $A$ to $B$ is denoted by $N_{A}^{B}$. Let $\Delta(X)=X_2 - X_1 $ and $\Delta(Y) = Y_2 - Y_1$. If both $\Delta(X),  \Delta(Y) \ge 0$, then $N_{A}^{B} = { X_2-X_1 + Y_2 - Y_1 \choose X_2 - X_1}$; else,  $N_{A}^{B} =0$. 
		
		\item For distinct lattice points $A, B$ and $C$, let $N_{A, B}^{C}$ denote the number of pairs of lattice paths $(p, q)$ such that $p$ is from $A$ to $C$, $q$ is from $B$ to $C$, and $p \cap q = \{ C\}$. 
		Also,  we write $N_{A}^{B, C}$ to denote the number of pairs of lattice paths $(p', q')$ such that $p'$ is from $A$ to $B$, $q'$ is from $A$ to $C$, and $p' \cap q' = \{ A\}$. 
		
		
		\item Let $A, B$ and $C$ be distinct lattice points such that $X(A)<X(B)$. We write ${}_{A}\mathcal{N}_{B}^{C}$ to denote the set of all triplets of lattice paths $(p, q, p')$ such that $p$ is from $A$ to $C$, $q$ is from $B$ to $C$, $p'$ is from $B$ corresponding to the word $w_{A}^{C}(p)$, $p\cap q = \{C\}$, and $q \cap p' = \{B\}$.   Further, let $|{}_{A}\mathcal{N}_{B}^{C}| = {}_{A}{N}_{B}^{C}$. 
		
		\item  Let $L$ be a straight line intersecting the two distinct lattice paths $p$ and $p'$. We define the \emph{internal points} on $L$ with respect to $p$ and $p'$ as those lattice points on $L$ which lie in between $p$ and $p'$, but not on either $p$ or $p'$. We denote the set of all such points by $\mathtt{IP}_{p,p'}(L)$.
		Also, we write $\mathtt{IP}_{p,p'}(L ~\&~ X = m)$ to denote the set of all internal points $(X_1, Y_1)$ on $L$ such that $X_1 \leq m$.
	\end{enumerate}

	We recall the relation between abelian-border of a word and lattice paths.
	\begin{theorem}[\cite{rampersad2013number}] \label{Lpth1}
		A binary word $w$ has an abelian-border of length $k$ iff for any point $A$ in $\mathds{Z}^2$, the lattice paths $p_A(w)$ and $p_A({w^R})$ meet after $k$ steps.
	\end{theorem}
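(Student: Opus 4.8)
The plan is to unwind the word--path dictionary and reduce the geometric ``meeting'' condition to an equality of Parikh vectors. Here ``$p_A(w)$ and $p_A(w^R)$ meet after $k$ steps'' is read as: the lattice point reached after the first $k$ steps of $p_A(w)$ coincides with the one reached after the first $k$ steps of $p_A(w^R)$. Fix the encoding in which the letter $a$ is a right step and $b$ an up step; the reverse convention, and swapping the roles of $a$ and $b$, give symmetric arguments, so it suffices to treat this one case. Write $w = c_1 c_2 \cdots c_n$ with each $c_i \in \Sigma$ and $n = |w|$, so that $w^R = c_n c_{n-1} \cdots c_1$, and for a binary word $z$ let $\pi(z) = (|z|_a, |z|_b) \in \mathbb{Z}^2$ denote its Parikh vector. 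Then the point reached after the first $k$ steps of $p_A(w)$ is $A + \pi(c_1 \cdots c_k)$, and the point reached after the first $k$ steps of $p_A(w^R)$ is $A + \pi(c_n c_{n-1} \cdots c_{n-k+1})$.

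Next I would record the two elementary facts that drive the argument. Let $p = c_1 \cdots c_k$ be the length-$k$ prefix of $w$ and $s = c_{n-k+1} \cdots c_n$ its length-$k$ suffix. First, reversal preserves Parikh vectors, so $\pi(c_n c_{n-1} \cdots c_{n-k+1}) = \pi(s)$; hence $p_A(w)$ and $p_A(w^R)$ meet after $k$ steps if and only if $A + \pi(p) = A + \pi(s)$, equivalently $\pi(p) = \pi(s)$, a condition that does not involve $A$ (so the quantifier ``for any point $A$'' is harmless, and ``for some $A$'' and ``for all $A$'' coincide). Second, since $|p| = |s| = k$ we have $|p|_a + |p|_b = |s|_a + |s|_b$, so $|p|_a = |s|_a$ holds if and only if $|p|_b = |s|_b$ holds, and either equality alone is equivalent to $p \sim_{\textup{abl}} s$. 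Combining these, ``$p_A(w)$ and $p_A(w^R)$ meet after $k$ steps'' is equivalent to $p \sim_{\textup{abl}} s$. Restricting to $1 \le k \le n-1$, the range in which $p$ is a nonempty proper prefix and $s$ a suffix of $w$, the relation $p \sim_{\textup{abl}} s$ is by definition the assertion that $w$ has an abelian-border of length $k$, which settles both directions.

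There is essentially no hard step here: the whole proof is a translation of definitions through the binary-word/lattice-path bijection. The only points needing care are making the encoding convention explicit and noting that the statement is independent of which convention is chosen, the Parikh-invariance of reversal used in the first fact, and the binary-alphabet remark in the second fact, which is exactly what upgrades the obvious necessity of the meeting condition to sufficiency. One should also explicitly exclude the degenerate parameters $k = 0$ and $k = n$, which correspond to the empty prefix and to the whole word and hence to no abelian-border.
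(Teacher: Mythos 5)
Your argument is correct: reading ``meet after $k$ steps'' as equality of the points reached after $k$ steps, the reduction to $\pi(p)=\pi(s)$ for the length-$k$ prefix $p$ and suffix $s$, together with the observation that on a binary alphabet equal length plus one matching letter-count forces abelian equivalence, is exactly the content of the statement. The paper itself gives no proof (Theorem \ref{Lpth1} is recalled from the cited reference), and your definition-unwinding is the standard argument one would supply, so there is nothing to flag beyond the care you already took with the encoding convention and the range $1\le k\le n-1$.
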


	\begin{remark}\label{onb}
		We have the following straightforward observations from Theorem \ref{Lpth1}.
		\begin{enumerate}
			\item \label{onb1}
			The length of the shortest abelian-border of a word $w$ is $k$ iff for any lattice point $A$, the paths $p_A(w)$ and $p_A({w^R})$ meet again only after $k$ steps.
			
			\item \label{onb2}
			Let $u, v \in \Sigma^+$ and $A \in \mathds{Z}^2$. The lattice paths $p_A(u)$ and $p_A({v})$  intersect after $k$ steps iff there exist $u' \in \text{Pref}_k(u)$ and $v' \in \text{Pref}_k(v)$ such that $u' \sim_{\textup{abl}} v'$.
		\end{enumerate}
	\end{remark} 
	
	\begin{example}
		Let $w= abab a aabb$. Clearly, $w$ has abelian-borders of lengths $4$ and $5$.
		As shown in Fig. \ref{entrypre}, $p_O(w)$ and $p_O({w^R})$ intersect at $A$ and $B$ (except the end points) after $4$ and $5$ steps, respectively. Note that these paths meet for second time at $A$ after $4$ steps, and length of the shortest abelian-border of $w$ is $4$.

	\end{example}
	
	\begin{figure}[h]
		\centering
		\begin{tikzpicture}
			\draw[step=0.5cm,gray,very thin] (0,0) grid (3,3); 
			\draw[line width=0.5mm]  (0,0) -- (0,0) node[below] {$O$};

			\draw[line width=0.5mm]  (0,0) -- (0.5,0) node[right] {};
			\draw[line width=0.5mm]  (0.5,0) -- (0.5,0.5) node[right] {};
			\draw[line width=0.5mm]  (0.5,0.5) -- (1,0.5) node[right] {};
			\draw[line width=0.5mm]  (1,0.5) -- (1,1) node[right] {};
			\draw[line width=0.5mm]  (1,1) -- (1.5,1) node[right] {};
			\draw[line width=0.5mm]  (1.5,1) -- (2,1) node[right] {};
			\draw[line width=0.5mm]  (2,1) -- (2.5,1) node[right] {};
			\draw[line width=0.5mm]  (2.5,1) -- (2.5,1.5) node[right] {};
			\draw[line width=0.5mm]  (2.5,1.5) -- (2.5,2) node[right] {};
			\draw[red, line width=1mm]  (0,0) -- (0,0.5) node[right] {};
			\draw[red, line width=1mm]  (0,0.5) -- (0,1) node[right] {};
			\draw[red, line width=1mm]  (0,1) -- (0.5,1) node[right] {};
			\draw[red, line width=1mm]  (0.5,1) -- (1,1) node[right] {};
			\draw[red, line width=1mm]  (1,1) -- (1.5,1) node[right] {};
			\draw[red, line width=1mm]  (1.5,1) -- (1.5,1.5) node[right] {};
			\draw[red, line width=1mm]  (1.5,1.5) -- (2,1.5) node[right] {};
			\draw[red, line width=1mm]  (2,1.5) -- (2,2) node[right] {};
			\draw[red, line width=1mm]  (2,2) -- (2.5,2) node[right] {};
			\draw[line width=0.5mm]  (2.5,2) -- (2.5,2) node[right] {$C$};

			\fill[] (1,1) circle (2pt);
			\draw[line width=0.5mm]  (1,1) -- (1,1) node[above] {$A$};
			\fill[] (1.5,1) circle (2pt);
			\draw[line width=0.5mm]  (1.5,1) -- (1.5,1) node[below] {$B$};
			
		\end{tikzpicture}
		\caption{For $w=ababaaabb$, lattice paths $p_O(w)$ (thin) and  $p_O({w^R})$ (thick).}
		\label{entrypre}
	\end{figure}

	\begin{remark}\label{bijec}
		The function $f: {\Sigma}^{n} \rightarrow P_{A}(n)$ defined by $f(w) = p_A({w})$ is a bijection, where $P_{A}(n)$ is the set of all lattice paths from a point $A = (k, l)$ to the line $X+Y= k+l+n$. 
	\end{remark}

	
	\begin{figure}[htbp]
		\centering
			\includegraphics[width=\linewidth]{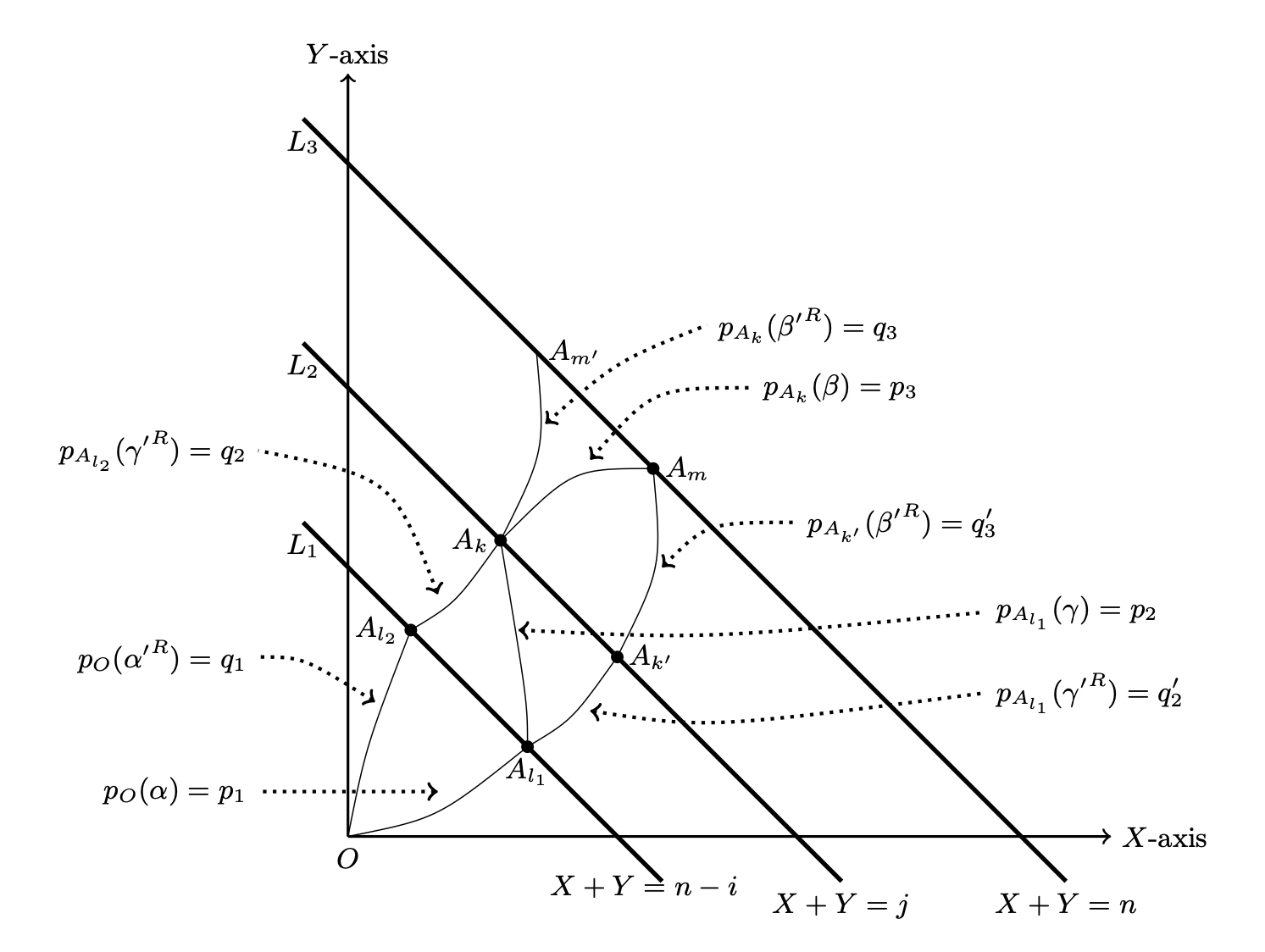}
			\caption{Lattice paths $p_O({\alpha {\gamma} {\beta}})$ and $p_O({{\alpha'}^R {\gamma'}^R {\beta'}^R})$}.
			\label{yytr}
	\end{figure}
	
\subsubsection{Strategy of Computation}
	We now describe the strategy of counting $\mathcal{M}^{\textup{o}}(n)$. Let $(u, v)$ be an arbitrary MAB pair with where $|u|=|v|=n$. In view of Lemma \ref{gret}, we have the following structure of $(u, v)$: $u=\alpha {\gamma} {\beta}$ and $v = {\beta'} {\gamma'} {\alpha'}$, for some $\alpha, {\gamma}, {\beta}, {\beta'}, {\gamma'}, {\alpha'} \in \Sigma^+$, and $\textup{sb}(u, v)= ({\gamma}{\beta}, {\beta'}{\gamma'})$, $\textup{sb}(v, u)= ({\gamma'}{\alpha'}, \alpha {\gamma}) $ with  $\textup{lsb}(u, v)= i$, $\textup{lsb}(v, u)= j $ so that $|{\gamma}| = |{\gamma'}| = i+j-n$,  $|\alpha|=|{\alpha'}| = n-i$, and   $|{\beta}|=|{\beta'}| = n-j$. In the rest of Section \ref{subsection2}, we use these notations along with the corresponding geometric elements described in the following.

	Consider the lines $L_1: X+Y = n-i$, $L_2: X+Y = j$ and $L_3: X+Y = n$, and the lattice paths $p = p_O({\alpha {\gamma} {\beta}})$ and $q = p_O({{\alpha'}^R {\gamma'}^R {\beta'}^R})$ from the origin $O = (0, 0)$ in $XY$-plane (For instance, see Fig. \ref{yytr}). Suppose $p$ intersects $L_1$, $L_2$ and $L_3$ at the points $A_{l_1}$, $A_k$, and $A_{m}$, respectively. Note that, since $\alpha {\gamma} \sim_{\textup{abl}} {\alpha'}^R{\gamma'}^R$, both the paths $p$ and $q$ intersect the line $L_2$ at the same point $A_k$. Suppose $q$ intersects $L_1$  and $L_3$ at the points $A_{l_2}$  and $A_{m'}$, respectively.  
	
	Note that the path $p$ has three parts, say $p_1, p_2$ and $p_3$, corresponding to $\alpha, \gamma$, and ${\beta}$, respectively, so that $p = p_1p_2p_3$. Also, the path $q$ has three parts, say $q_1, q_2$ and $q_3$, corresponding to ${\alpha'}^R$,  ${\gamma'}^R$, and ${\beta'}^R$, respectively, so that $q = q_1q_2q_3$. 
	
	Let $A_{l_1} = (l_1, n-i-l_1)$, $A_{l_2} = (l_2, n-i-l_2)$, $A_k = (k, j-k)$, $A_m = (m, n-m)$ and $A_{m'} = (m', n-m')$, where $l_1 \neq l_2$, $0 \leq l_1, l_2 \leq n-i$. 
	Observe that 
	$m' = l_2 + |{\gamma'}^R {\beta'}^R|_a$, 
	$n-m' = n-i-l_2 + |{\gamma'}^R {\beta'}^R|_b$,  
	$m = l_1 + |{\gamma} {\beta}|_a$, and
	$n-m = n- i- l_1 + |{\gamma} {\beta}|_b$. 
	Since ${\gamma} {\beta} \sim_{abl} {\gamma'}^R {\beta'}^R$, 
	$X(A_{m}) - X(A_{m'}) = m - m' = l_1 - l_2 = X(A_{l_1}) - X(A_{l_2})$ and 
	$Y(A_{m'}) - Y(A_{m}) =  Y(A_{l_2}) - Y(A_{l_1})$.
	Thus, the lattice path $p_{A_{l_1}}({{\gamma'}^R {\beta'}^R})$ intersects the line $L_3$ at $A_m$. 
	Suppose $p_{A_{l_1}}({{\gamma'}^R {\beta'}^R})$ intersects $L_2$ at $A_{k'}$, say $(k', j-k')$. Let $q_2' = p_{A_{l_1}}({\gamma'}^R)$ and $q_3' = p_{A_{k'}}({\beta'}^R)$.
	Since ${\gamma} {\beta} \sim_{abl} {\beta'} {\gamma'}$ and $({\gamma} {\beta}, {\beta'} {\gamma'})$ is an MAU pair, 
	$p_2p_3 \cap q_2'q_3' = \{A_{l_1}, A_m \}$.
	Similarly, $p_1p_2 \cap q_1q_2 = \{O, A_k \}$.
	Now by Remark \ref{bijec}, the number of pairs of words $(\alpha {\gamma} {\beta},  {\alpha'}^R {\gamma'}^R {\beta'}^R)$ equals the number of pairs of lattice paths $(p, q)$.
	To count the number of pairs of lattice paths $(p, q)$, we first observe the following:
	\begin{itemize}
		\item  for each pair of paths $(p, q)$, there exist pairs  $(p_1p_2, q_1q_2)$ and $(p_2p_3, q_2'q_3')$ such that 
		$p_1p_2 \cap q_1q_2 = \{O, A_k\}$ and  
		$ p_2p_3 \cap q_2'q_3' = \{A_{l_1}, A_m\}$.
		\item for pairs of paths $(p_1p_2, q_1q_2)$ and $(p_2p_3, q_2'q_3')$ such that 
		$p_1p_2 \cap q_1q_2 = \{O, A_k\}$ and  
		$ p_2p_3 \cap q_2'q_3' = \{A_{l_1}, A_m\}$, 
		there exists a pair of paths $(p, q)$ such that 
		$p_1p_2$ is an initial part of $p$, $p_2p_3$ is a terminal part of $p$,
		$q_1q_2$ is an initial part of $q$, and  
		$ q_2q_3$ is a terminal part of $q$. 
	\end{itemize}
	Thus, $\mathcal{M}^{\textup{o}}(n)$ is the number of pairs of paths $(p_1p_2, q_1q_2)$ and $(p_2p_3, q_2'q_3')$ such that 
	$p_1p_2 \cap q_1q_2 = \{O, A_k\}$ and  
	$ p_2p_3 \cap q_2'q_3' = \{A_{l_1}, A_m\}$.
	 
	First consider $l_1 > l_2$   (see Fig. \ref{yytr}).
	Since we can move only horizontally or vertically along a lattice path,
	$l_1 \leq k \leq l_2 + i+j-n$ and $ k' \leq m \leq k +n-j$. Now we calculate the following:	
	(i) The number of pairs of paths $(p_1, q_1)$ such that $p_1 \cap q_1 = \{ O\}$, i.e., $N_O^{A_{l_1}, A_{l_2}}$,  (ii)
	the number of triplets of paths $(q_2, p_2, q_2')$  such that $q_2 \cap p_2 = \{A_k\}$, and $p_2 \cap q_2' = \{A_{l_1} \}$, i.e., ${}_{A_{l_2}}{{N}}_{A_{l_1}}^{A_k} $, and (iii) the number of pairs of paths $(p_3, q_3')$ such that $p_3 \cap q_3' = \{A_m \}$, i.e., $N_{A_{k}, A_{k'}}^{A_m}$.
	
	Then the number $ (N_O^{A_{l_1}, A_{l_2}}) ({}_{A_{l_2}}{{N}}_{A_{l_1}}^{A_k}) (N_{A_{k}, A_{k'}}^{A_m})$ gives us the required number of paths  for fixed $i, j, l_1, l_2, k, m$. Thus, for fixed $i, j$, and for all $0 \leq l_2 < l_1 \leq n-i$, the number of required paths is
	\begin{center}
		\small
		$S = \displaystyle \sum_{l_2} \sum_{l_1 > l_2} \sum_{k=l_1}^{(l_2 + i +j-n)} \sum_{m= k'}^{(k +n-j)} (N_O^{A_{l_1}, A_{l_2}}) ({}_{A_{l_2}}{{N}}_{A_{l_1}}^{A_k}) (N_{A_{k}, A_{k'}}^{A_m}). $
	\end{center}
	For fixed $i, j$, when $l_1 < l_2$, the required number of paths is also $S$ by  symmetry. Hence,  
	$\displaystyle \mathcal{M}^{\text o}(n)= 2\sum_{j} \sum_{i} \sum_{l_2} \sum_{l_1>l_2} \sum_{k=l_1}^{(l_2 + i +j-n)} \sum_{m= k'}^{(k +n-j)} (N_O^{A_{l_1}, A_{l_2}}) ({}_{A_{l_2}}{{N}}_{A_{l_1}}^{A_k}) (N_{A_{k}, A_{k'}}^{A_m}).$
	
	\subsubsection{Computation} We first compute $N_O^{A_{l_1}, A_{l_2}} $ and $ N_{A_{k}, A_{k'}}^{A_m}$. Then 
	we compute $\mathcal{M}^{\textup{o}}(n)$ for each of cases $|{\gamma}|=1$, $|{\gamma}|=2$, and $|{\gamma}|\geq 3$. In these calculations, we need the points $E_1 = (l_1, n-i-l_1+1)$, the one just above $A_{l_1}$, and $E_2 = (k, j-k-1)$, the one just below $A_k$ (see Fig. \ref{fig3n1} for reference).


	We directly obtain the values of $N_{O}^{A_{l_1}, A_{l_2}}$ and $N_{A_k, A_{k'}}^{A_{m}}$ from \cite[Theorem 2]{gessel1996counting}, as stated in the following proposition.
	
	\begin{proposition}\label{niwe}
		For $l_1 - l_2 \geq 1$,  
		\begin{center}
			$N_{O}^{A_{l_1}, A_{l_2}} =   \frac{l_1 - l_2}{n-i} {n-i \choose l_1} {n-i \choose l_2}\; \text{ and }\; N_{A_k, A_{k'}}^{A_{m}} = \frac{l_1 - l_2}{n-j} {n-j \choose m-k} {n-j \choose m-k'}.$
		\end{center}
	\end{proposition}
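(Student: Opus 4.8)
The plan is to recognize both quantities as instances of the classical reflection-principle count for pairs of non-intersecting lattice paths, and then verify that the geometric data $(O, A_{l_1}, A_{l_2})$ and $(A_k, A_{k'}, A_m)$ produce exactly the displacement vectors appearing in \cite[Theorem 2]{gessel1996counting}. Recall that the Lindstr\"om--Gessel--Viennot / reflection count for the number of pairs of paths $(p',q')$ with $p'$ from $A$ to $B$, $q'$ from $A$ to $C$, and $p'\cap q' = \{A\}$ (when $X(B)$, $X(C)$ straddle appropriately) has the product-minus-swap form, which collapses to a single determinant $\det\begin{pmatrix}\binom{s}{b}&\binom{s}{c}\\\binom{s}{b-1}&\binom{s}{c-1}\end{pmatrix}$ and simplifies to $\frac{c-b}{s}\binom{s}{b}\binom{s}{c}$ whenever the total number of steps $s$ is the same for all relevant sub-paths. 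So the work is entirely in bookkeeping: identify the common step count $s$ and the two ``column'' parameters in each of the two cases.

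First I would treat $N_O^{A_{l_1},A_{l_2}}$. Here $p_1 = p_{O}(\alpha)$ ends at $A_{l_1}=(l_1,\,n-i-l_1)$ and $q_1 = p_O({\alpha'}^R)$ ends at $A_{l_2}=(l_2,\,n-i-l_2)$; both are paths of length $|\alpha|=|\alpha'|=n-i$ from $O$ to the line $L_1: X+Y=n-i$, so the common step count is $s=n-i$ and the endpoints are determined by their $X$-coordinates $l_1$ and $l_2$. Since $l_1>l_2$ (the case under consideration) and $p_1\cap q_1 = \{O\}$, this is precisely the configuration of \cite[Theorem 2]{gessel1996counting} with the two paths leaving a common vertex and never meeting again, so $N_O^{A_{l_1},A_{l_2}} = \dfrac{l_1-l_2}{n-i}\dbinom{n-i}{l_1}\dbinom{n-i}{l_2}$. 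The only point requiring a sentence of justification is that the condition $p_1\cap q_1=\{O\}$ forces $p_1$ to stay weakly ``above'' (or consistently to one side of) $q_1$ after the first step, which is exactly the non-crossing hypothesis of the cited theorem; since the two endpoints differ, a pair of monotone lattice paths with the same step count that do not meet again is automatically such a non-crossing pair.

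Next I would treat $N_{A_k,A_{k'}}^{A_m}$, the number of pairs $(p_3,q_3')$ with $p_3$ from $A_k=(k,j-k)$ to $A_m=(m,n-m)$, $q_3'$ from $A_{k'}=(k',j-k')$ to $A_m$, and $p_3\cap q_3'=\{A_m\}$. Here $p_3=p_{A_k}(\beta)$ has $|\beta|=n-j$ steps and $q_3'=p_{A_{k'}}({\beta'}^R)$ has $|\beta'|=n-j$ steps, so again the common step count is $s=n-j$, and both paths terminate at $A_m$ on $L_3$. This is the time-reversed version of the previous configuration: two equal-length monotone paths arriving at a common point without having met before. The relevant column parameters are the horizontal displacements $m-k$ for $p_3$ and $m-k'$ for $q_3'$; the reflection count gives $\frac{|(m-k)-(m-k')|}{n-j}\binom{n-j}{m-k}\binom{n-j}{m-k'} = \frac{k'-k}{n-j}\binom{n-j}{m-k}\binom{n-j}{m-k'}$. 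Finally I would check the sign/ordering: from the earlier analysis $X(A_{k'})-X(A_k)=k'-k$ equals $X(A_m)-X(A_{m'})$ which equals $l_1-l_2>0$ (this identity is recorded in the paragraph preceding Proposition~\ref{niwe}), so $k'-k=l_1-l_2$ and the numerator can be rewritten as $l_1-l_2$, giving the stated formula $N_{A_k,A_{k'}}^{A_m}=\frac{l_1-l_2}{n-j}\binom{n-j}{m-k}\binom{n-j}{m-k'}$.

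The main obstacle is not any computation but making sure the hypotheses of \cite[Theorem 2]{gessel1996counting} match: one must confirm (a) that requiring the paths to meet \emph{only} at the designated common vertex is the same as the non-intersecting condition in the cited theorem after peeling off that vertex, and (b) that the relevant $X$-coordinates are in the order the theorem expects, so that the count is $\frac{l_1-l_2}{n-i}\binom{n-i}{l_1}\binom{n-i}{l_2}$ rather than its negative or zero. Both follow from the observation that under $l_1>l_2$ the path $p_1$ must take its first step north and $q_1$ its first step east (otherwise they would share a second point immediately), reducing each quantity to a standard non-crossing pair count on a shifted grid; then the determinant evaluation $\binom{s}{b}\binom{s}{c} - \binom{s}{b+1}\binom{s}{c-1} = \frac{c-b}{s}\binom{s}{b}\binom{s}{c}$ is a routine identity. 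With $s=n-i$, $(b,c)=(l_2,l_1)$ in the first case and $s=n-j$, $(b,c)=(m-k',m-k)$ (using $k'>k$) in the second, the two displayed formulas follow.
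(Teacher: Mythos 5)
Your approach is the same as the paper's: the paper offers no argument for this proposition beyond the sentence preceding it, which says the two values are read off directly from \cite[Theorem 2]{gessel1996counting}, and your parameter bookkeeping is correct --- common step count $s=n-i$ with endpoint abscissae $l_2<l_1$ in the first case, and common step count $s=n-j$ with displacements $m-k'<m-k$ and $k'-k=l_1-l_2$ (which follows since $q_2'$ is the translate of $q_2$ by $A_{l_1}-A_{l_2}$) in the second, together with the time-reversal reduction of the second count to the first. Two details in your justification are wrong as written, though neither affects the conclusion. First, when $l_1>l_2$ it is $p_1$, the path ending at $A_{l_1}$ (the endpoint with the \emph{larger} abscissa), that must take its first step east and $q_1$ that must take its first step north, not the other way around. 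Second, the ``routine identity'' you invoke, $\binom{s}{b}\binom{s}{c}-\binom{s}{b+1}\binom{s}{c-1}=\frac{c-b}{s}\binom{s}{b}\binom{s}{c}$, is false (take $s=3$, $b=1$, $c=2$: the left side is $0$ while the right side is $3$), and the $2\times 2$ determinant you display in your first paragraph does not evaluate to the stated quantity either. After peeling off the forced first steps, the Lindstr\"om--Gessel--Viennot determinant is over paths of length $s-1$ starting at $(1,0)$ and $(0,1)$, and the correct evaluation is
\[
\binom{s-1}{b}\binom{s-1}{c-1}-\binom{s-1}{b-1}\binom{s-1}{c}=\frac{c-b}{s}\binom{s}{b}\binom{s}{c},
\]
which does yield both displayed formulas with $(s,b,c)=(n-i,\,l_2,\,l_1)$ and $(s,b,c)=(n-j,\,m-k',\,m-k)$ respectively.
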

	
	\begin{lemma}\label{prop120}
		$||{\alpha}|_c-|{\alpha'}|_c|=1 ~\forall ~c \in \{a, b\}$  
		iff $i+j-n =1$.
	\end{lemma}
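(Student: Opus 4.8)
The plan is to rephrase the condition on $\alpha,\alpha'$ as a condition on the overlap factor $\gamma$, and then to play the minimality of $i$ against that of $j$. Write $g=i+j-n$; by Lemma~\ref{gret} we have $|\gamma|=|\gamma'|=g\ge 1$, $|\alpha|=|\alpha'|=n-i$, $|\beta|=|\beta'|=n-j$, $\alpha\gamma\sim_{\textup{abl}}\gamma'\alpha'$ and $\gamma\beta\sim_{\textup{abl}}\beta'\gamma'$. Comparing letter counts in these two equivalences gives, for each $c\in\{a,b\}$,
\[
|\alpha|_c-|\alpha'|_c \;=\; |\gamma'|_c-|\gamma|_c \;=\; |\beta|_c-|\beta'|_c .
\]
Since $|\gamma|_a+|\gamma|_b=|\gamma'|_a+|\gamma'|_b=g$, the numbers $|\gamma'|_a-|\gamma|_a$ and $|\gamma'|_b-|\gamma|_b$ are negatives of one another, so $\bigl|\,|\alpha|_a-|\alpha'|_a\,\bigr|=\bigl|\,|\alpha|_b-|\alpha'|_b\,\bigr|$; call this common value $\delta$, and note $0\le\delta\le g$. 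The lemma then amounts to the claim $\delta=1\iff g=1$.

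First I would show $\delta\ge 1$, i.e.\ $\gamma\not\sim_{\textup{abl}}\gamma'$. If instead $\gamma\sim_{\textup{abl}}\gamma'$, then cancelling $\gamma$ against $\gamma'$ in the displayed identities forces $\alpha\sim_{\textup{abl}}\alpha'$ and $\beta\sim_{\textup{abl}}\beta'$; but then $(\beta,\beta')$ is an internal abelian-border of $(u,v)$ of length $n-j<i$ (the strict inequality uses $g\ge 1$), contradicting $\textup{lsb}(u,v)=i$. Together with $\delta\le g$ this settles the easy direction: if $g=1$ then $\delta=1$.

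For the converse, suppose $\delta=1$. Interchanging the two letters if necessary (which preserves every hypothesis and both of $i,j$), we may assume $|\gamma'|_a=|\gamma|_a+1$, hence also $|\beta|_a=|\beta'|_a+1$ and $|\alpha|_a=|\alpha'|_a+1$. For $0\le\ell\le n$ put
\[
f(\ell)=|\mathrm{suff}_\ell(u)|_a-|\mathrm{pref}_\ell(v)|_a,\qquad h(\ell)=|\mathrm{pref}_\ell(u)|_a-|\mathrm{suff}_\ell(v)|_a,
\]
with $\mathrm{pref}_\ell,\mathrm{suff}_\ell$ the prefix and suffix of length $\ell$. Since $u,v$ are fixed, $(u,v)$ has an internal (resp.\ external) abelian-border of length $\ell$ exactly when $f(\ell)=0$ (resp.\ $h(\ell)=0$), so by minimality $f(i)=0$ while $f$ has no zero in $[1,i-1]$, and $h(j)=0$ while $h$ has no zero in $[1,j-1]$. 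As $f(0)=h(0)=0$ and consecutive values of each differ by at most $1$, both are sign-constant on those intervals; and since $f(n-j)=|\beta|_a-|\beta'|_a=1$ with $n-j\in[1,i-1]$, and $h(n-i)=|\alpha|_a-|\alpha'|_a=1$ with $n-i\in[1,j-1]$, in fact $f\ge 1$ on $[1,i-1]$ and $h\ge 1$ on $[1,j-1]$. Now restrict to the window occupied by $\gamma$: set $F(t)=f(n-j+t)$ and $H(t)=h(n-i+t)$ for $0\le t\le g$, so $F(0)=H(0)=1$, $F(g)=H(g)=0$, and $F(t),H(t)\ge 1$ for $0\le t\le g-1$. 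Reading $F(g-t)$ and $H(t)$ off from the $a$-counts of the relevant prefixes and suffixes of $\gamma$ and $\gamma'$ — whose $\gamma$-parts together exhaust $\gamma$ and whose $\gamma'$-parts together exhaust $\gamma'$ — a short computation gives $F(g-t)+H(t)=2+|\gamma|_a-|\gamma'|_a=1$ for $0\le t\le g$. Consequently $F(s)=1-H(g-s)\le 0$ for $1\le s\le g$; if $g\ge 2$ this contradicts $F(1)\ge 1$. Therefore $g=1$, as required.

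The decomposition of $f,h$ into the $\gamma$-window and the reading of $F,H$ from the letters of $\gamma,\gamma'$ are routine bookkeeping; the step I expect to require genuine care — the crux — is the identity $F(g-t)+H(t)=1$. It says precisely that the ballot-type positivity forced by $\textup{lsb}(u,v)=i$ and the one forced by $\textup{lsb}(v,u)=j$ run through the overlap factor $\gamma$ in opposite directions, and hence cannot coexist once $|\gamma|\ge 2$.
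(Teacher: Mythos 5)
Your proof is correct, and it takes a genuinely different route from the paper's. The paper argues the hard direction ($\delta=1\Rightarrow|\gamma|=1$) by a three-way case analysis on the final letters of $\gamma$ and ${\gamma'}^R$, with Case 1 resolved by inspecting a lattice-path figure (the two paths are "observed" to cross before $A_k$) and the other cases by exhibiting a shorter border; the converse is the same direct letter count you give. You instead encode both border conditions in the step-$\pm1$ difference functions $f(\ell)=|\mathrm{suff}_\ell(u)|_a-|\mathrm{pref}_\ell(v)|_a$ and $h(\ell)=|\mathrm{pref}_\ell(u)|_a-|\mathrm{suff}_\ell(v)|_a$, use the discrete intermediate value theorem plus $f(n-j)=h(n-i)=1$ to force $f,h\ge 1$ strictly before the shortest borders, and then close the argument with the reflection identity $F(g-t)+H(t)=2+|\gamma|_a-|\gamma'|_a=1$ on the overlap window, which I have checked and which is exactly the cancellation $|\gamma_{[1..t]}|_a+|\gamma_{[t+1..g]}|_a=|\gamma|_a$ (and likewise for $\gamma'$). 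What your approach buys: it is uniform (no case split on boundary letters), it replaces the figure-based step of the paper's Case 1 with a purely combinatorial inequality, and along the way it isolates the reusable facts $|\alpha|_c-|\alpha'|_c=|\gamma'|_c-|\gamma|_c=|\beta|_c-|\beta'|_c$ and $\delta\ge 1$ (equivalently $\gamma\not\sim_{\textup{abl}}\gamma'$), the latter of which the paper invokes in its converse direction ("since $\gamma\neq\gamma'$") without proof. The paper's case analysis, by contrast, is shorter to state and directly exhibits the offending shorter border in each case.
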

	
	\begin{proof}
		Suppose $||{\alpha}|_c-|{\alpha'}|_c|= 1$ $\forall c \in \{a, b\}$.
		Since ${\alpha} {\gamma} \sim_{\textup{abl}} {\gamma'} {\alpha'} $, we have 
		$||{\gamma'}|_c-|{\gamma}|_c| = 1~ \forall c \in \{a, b\}$. Without loss of generality, suppose $|{\alpha}|_a - |{\alpha'}|_a = 1$. If possible, let $i+j-n =|{\gamma}|=|{\gamma'}| \geq 2$. Then we have the following cases: 
			
			\textit{Case 1:} Let ${\gamma} = x a$ and ${\gamma'}^R = y b$ for some $x, y \in \Sigma^+$. Then it can be observed that (see Figure \ref{caseonelemma}) the paths $p_O(\alpha\gamma)$ and $p_O({\alpha'}^R{\gamma'}^R)$ intersect before $A_k$; which is a contradiction as $|\alpha\gamma|$ is shortest external abelian-border of $(u, v)$.

			
			\textit{Case 2:} Let ${\gamma} = x b$ and ${\gamma'}^R = y a$ for some $x, y \in \Sigma^+$.
			Now, ${\gamma} {\beta} \sim_{\textup{abl}} {\beta'} {\gamma'}$  implies $|{\beta}|_a - |{\beta'}|_a  = 1$. Then $|b{\beta}|_a= |{\beta}|_a =1+ |{\beta'}|_a = |{\beta'} a|_a$ and $(u, v)  = ({\alpha} x b {\beta}, {\beta'} a y^R {\alpha'})$. 
			Thus $b {\beta} \sim_{\textup{abl}} {\beta'} a$, i.e., $\textup{lsb}(u, v) \leq |b {\beta}| <|\gamma {\beta}|$, a contradiction.


			
			\textit{Case 3:} Let ${\gamma} = x a_1$ and ${\gamma'}^R = y a_1$ for some $x, y \in \Sigma^+$, $a_1 \in \{a, b\}$.
			Then, ${\alpha} {\gamma} \sim_{\textup{abl}} {\gamma'} {\alpha'} $  implies ${\alpha} x \sim_{\textup{abl}} y^R {\alpha'}$ which is a contradiction as $\textup{lsb}(v, u)=|\alpha \gamma|$.
			
		
		Therefore, our assumption $i+j-n \geq 2$ was wrong. Hence as $i+j-n \geq 1$, $i+j-n =1$.
		
		Conversely, let $i+j-n=1$, i.e., $|{\gamma}| = |{\gamma'}| =1$. Since ${\gamma} \neq {\gamma'}$, consider ${\gamma}=a_1$ and ${\gamma'}=b_1$ where $a_1 \neq b_1$, and $a_1, b_1 \in \{a, b\}$.
		Now, ${\alpha} {\gamma} \sim_{\textup{abl}} {\gamma'} {\alpha'}$ implies   $|{\alpha} a_1|_{a_1} =  |b_1 {\alpha'}|_{a_1}$, i.e., 
		$|{\alpha'}|_{a_1} - |{\alpha}|_{a_1} = 1$.
		Similarly, 
		$|{\alpha} a_1|_{b_1} =  |b_1 {\alpha'}|_{b_1}$ implies $|{\alpha}|_{b_1} -  | {\alpha'}|_{b_1} =  1$.
		Therefore  $||{\alpha}|_c -|{\alpha'}|_c|=1 ~ \forall c \in \{a, b\}$.
	\end{proof}

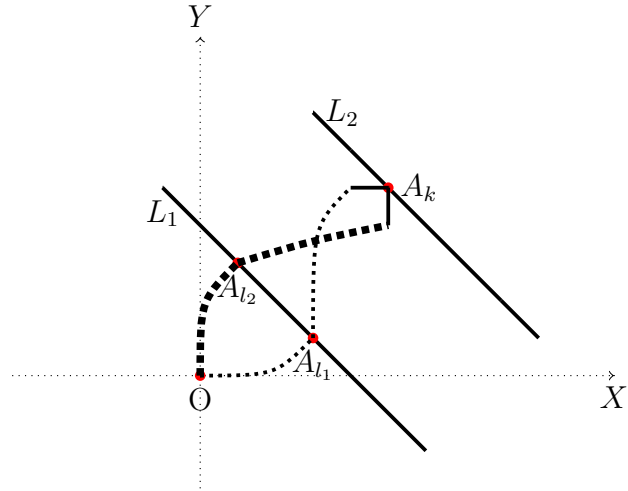
\begin{figure}[h]
				\centering
				\begin{tikzpicture}
                \draw[dotted, ->, line width=0.15mm] (-2,1.5) -- (6,1.5) node[midway, above] {};
				\draw[line width=.1mm] (6,1.5) -- (6,1.5) node[midway,below] {$X$};
				\draw[dotted, ->, line width=0.15mm] (0.5,0) -- (0.5,6) node[midway, above] {}; 
				\draw[line width=.1mm] (0.5,6) -- (0.5,6) node[midway, above] {$Y$};
					
					\draw[line width=.5mm]  (0,4) -- (1,3) node[below] {};
                    \draw[line width=.5mm]  (1,3) -- (2,2) node[below] {};
                    \draw[line width=.5mm]  (2,2) -- (3.5,0.5) node[below] {};
					\draw[line width=.5mm]  (1,3) -- (1,3) node[below] {$A_{l_2}$};
					\fill[red] (1,3) circle (2pt);
					\draw[line width=.5mm]  (2,2) -- (2,2) node[below] {$A_{l_1}$};
					\fill[red] (2,2) circle (2pt);
					\draw[line width=.5mm]  (0,4) -- (0,4) node[below] {$L_1$};

					\draw[line width=.5mm]  (2,5) -- (2.5,4.5) node[below] {};
                    \draw[line width=.5mm]  (2.5,4.5) -- (4.5,2.5) node[below] {};
                    \draw[line width=.5mm]  (4.5,2.5) -- (5,2) node[below] {};
					\fill[red] (3,4) circle (2pt);
					\draw[line width=.5mm]  (3,4) -- (3,4) node[right] {$A_{k}$};
					\draw[line width=.5mm]  (2,5) -- (2,5) node[below,right] {$L_2$};
										
					\draw[line width=.5mm]  (2.5,4) -- (3,4) node[midway, above] {};
					\draw[line width=.5mm]  (3,3.5) -- (3,4) node[midway, right] {};
					
                    \draw[dotted, line width=0.5mm] (2,2) .. controls (2,3.5) .. (2.5,4) node[midway, right] {};           
                    \draw[dotted, line width=1mm] (1,3) .. controls (2,3.3) .. (3,3.5) node[midway, right] {};
                  
					\fill[red] (0.5,1.5) circle (2pt);
					\draw[line width=.5mm]  (0.5,1.5) -- (0.5,1.5) node[below] {O};
					\draw[dotted, line width=0.5mm] (0.5,1.5) .. controls (1.5,1.5) .. (2,2) node[midway, right] {};
					\draw[dotted, line width=1mm] (0.5,1.5) .. controls (0.5,2.5) .. (1,3) node[midway, right] {};														
				\end{tikzpicture}
				\caption{Illustration of proof of Case $1$ of Lemma \ref{prop120}. Here $p_O(\alpha \gamma)$ is thin dotted line and $p_O(\alpha'^R \gamma'^R)$ is the thick dotted line.}
				\label{caseonelemma}
			\end{figure}

	\begin{corollary}
		\begin{enumerate}
			\item $||{\alpha}|_c-|{\alpha'}|_c|=1~\forall c \in \{a, b\} \Leftrightarrow ||{\beta}|_c-|{\beta'}|_c|=1~\forall c \in \{a, b\}$.
			\item $||{\beta}|_c-|{\beta'}|_c|=1~  \forall c \in \{a, b\}  \Leftrightarrow i+j-n =1$.
		\end{enumerate}
	\end{corollary}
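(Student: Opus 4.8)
The plan is to derive both parts directly from the two abelian equivalences recorded in Lemma~\ref{gret}, together with Lemma~\ref{prop120}. First I would observe that since ${\alpha}{\gamma} \sim_{\textup{abl}} {\gamma'}{\alpha'}$, counting occurrences of a fixed letter $c \in \{a,b\}$ on both sides gives $|{\alpha}|_c + |{\gamma}|_c = |{\gamma'}|_c + |{\alpha'}|_c$, and hence the signed identity $|{\alpha}|_c - |{\alpha'}|_c = |{\gamma'}|_c - |{\gamma}|_c$. Likewise, ${\gamma}{\beta} \sim_{\textup{abl}} {\beta'}{\gamma'}$ yields $|{\gamma}|_c + |{\beta}|_c = |{\beta'}|_c + |{\gamma'}|_c$, hence $|{\beta}|_c - |{\beta'}|_c = |{\gamma'}|_c - |{\gamma}|_c$. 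Comparing the two identities shows $|{\alpha}|_c - |{\alpha'}|_c = |{\beta}|_c - |{\beta'}|_c$ for every $c \in \{a,b\}$, and therefore $\big| |{\alpha}|_c - |{\alpha'}|_c \big| = \big| |{\beta}|_c - |{\beta'}|_c \big|$ for every $c$. This equality of absolute values is already stronger than statement (1), which follows from it immediately in both directions.

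For part (2) I would simply chain (1) with Lemma~\ref{prop120}: the latter asserts that $\big| |{\alpha}|_c - |{\alpha'}|_c \big| = 1$ for all $c \in \{a,b\}$ if and only if $i+j-n = 1$, and part (1) lets us replace the $\alpha$-condition by the $\beta$-condition. Combining the two equivalences gives $\big| |{\beta}|_c - |{\beta'}|_c \big| = 1$ for all $c \in \{a,b\}$ if and only if $i+j-n = 1$, which is (2).

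I do not expect any genuine obstacle in this argument; the corollary is essentially a bookkeeping consequence of the abelian relations defining the blocks $\alpha,\beta,\gamma$ and their primed counterparts, plus the already-proved Lemma~\ref{prop120}. The only point requiring mild care is that the relations $|{\alpha}|_c - |{\alpha'}|_c = |{\gamma'}|_c - |{\gamma}|_c$ must be kept as \emph{signed} equalities before passing to absolute values; writing both differences in terms of the common quantity $|{\gamma'}|_c - |{\gamma}|_c$ is precisely what makes the passage from the $\alpha$-condition to the $\beta$-condition transparent.
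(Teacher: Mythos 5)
Your proof is correct and is exactly the derivation the paper intends (the corollary is stated without proof): the signed identities $|{\alpha}|_c-|{\alpha'}|_c=|{\gamma'}|_c-|{\gamma}|_c=|{\beta}|_c-|{\beta'}|_c$ from the two abelian equivalences of Lemma~\ref{gret} give part (1), and chaining with Lemma~\ref{prop120} gives part (2).
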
	
	
	\begin{remark}
		If $i+j = 2n-2$, i.e., $i=j=n-1$, then $|\alpha| = |\gamma| = |\beta| = 1$ so that $n=3$ and $(u,v) \in \{ (aba, bab), (bab, aba)\}$.
	\end{remark}

	\begin{lemma}\label{lem1}
		For $|\gamma| \ge 2$, if $(q_2, p_2, q_2') \in {}_{A_{l_2}}{\mathcal{N}}_{A_{l_1}}^{A_k} $, then $w_{A_{l_2}}^{A_k}(q_2) = aza$ for some $z \in \Sigma^*$ and consequently, $w_{A_{l_1}}^{A_k}(p_2) = bz'b$ for some $z' \in \Sigma^*$. 
	\end{lemma}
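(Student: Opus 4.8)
The plan is to pin down the first and last steps of the three paths $q_2,p_2,q_2'$ directly from the two non-intersection conditions packaged in the symbol ${}_{A_{l_2}}\mathcal{N}_{A_{l_1}}^{A_k}$, namely $q_2\cap p_2=\{A_k\}$ and $p_2\cap q_2'=\{A_{l_1}\}$. I use the notation and conventions of Section~\ref{subsection2}: $A_{l_1}=(l_1,n-i-l_1)$ and $A_{l_2}=(l_2,n-i-l_2)$ lie on $L_1:X+Y=n-i$, $A_k=(k,j-k)$ lies on $L_2:X+Y=j$, each of $q_2,p_2,q_2'$ has length $|\gamma|=i+j-n$, and $l_2<l_1$ — the last because the symbol ${}_{A_{l_2}}\mathcal{N}_{A_{l_1}}^{A_k}$ requires $X(A_{l_2})<X(A_{l_1})$; since $|\gamma|\ge2$, the line $L_2$ is at anti-diagonal distance $\ge2$ above $L_1$. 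By the convention fixed in Figure~\ref{entrypre}, an East step is the letter $a$ and a North step is the letter $b$. For a monotone path $P$ and $c\in\{n-i,\dots,j\}$, write $x_P(c)$ for the $X$-coordinate of the unique lattice point of $P$ on $X+Y=c$; then $x_P$ is non-decreasing, increases by at most $1$ per unit step, increases by exactly $1$ precisely at an East step, and two of our paths share a lattice point on $X+Y=c$ if and only if their $x$-values there agree.

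\textbf{Last steps.} Since $q_2\cap p_2=\{A_k\}$, the penultimate vertices $B_q$ of $q_2$ and $B_p$ of $p_2$ are distinct; both are lattice points of $X+Y=j-1$ adjacent to $A_k=(k,j-k)$, and there are (at most) two such points, namely $(k-1,j-k)$ and $(k,j-k-1)$, so $\{B_q,B_p\}=\{(k-1,j-k),(k,j-k-1)\}$. Put $d(c)=x_{p_2}(c)-x_{q_2}(c)$. Then $d(n-i)=l_1-l_2\ge1$, $d(j)=0$, $d$ changes by at most $1$ per step, and $d(c)\neq0$ for $n-i\le c\le j-1$ (as $A_k$ is the only common lattice point and it lies on $X+Y=j$); an easy induction then gives $d(c)\ge1$ on $\{n-i,\dots,j-1\}$. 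In particular $X(B_p)=x_{p_2}(j-1)>x_{q_2}(j-1)=X(B_q)$, which forces $B_p=(k,j-k-1)$ and $B_q=(k-1,j-k)$. Hence the last step of $p_2$, namely $B_p\to A_k$, is North ($b$), and the last step of $q_2$ is East ($a$).

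\textbf{First steps.} Here $q_2'$ enters. Carrying the word $w_{A_{l_2}}^{A_k}(q_2)$ but starting at $A_{l_1}$, the path $q_2'$ ends at $A_{l_1}+(A_k-A_{l_2})=A_k+(l_1-l_2)(1,-1)$, a lattice point of $L_2$ strictly to the right of $A_k$ because $l_1-l_2\ge1$. Both $p_2$ and $q_2'$ leave $A_{l_1}$ and meet only there; since $|\gamma|\ge2$, neither of the points $(l_1,n-i-l_1+1)$, $(l_1+1,n-i-l_1)$ reached by a first step equals $A_{l_1}$ or an endpoint, so $p_2$ and $q_2'$ cannot make the same first step — one goes North, the other East. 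Suppose $q_2'$ made the North step. Then, with $e(c)=x_{p_2}(c)-x_{q_2'}(c)$, we have $e(n-i)=0$, $e(n-i+1)=1$, $e(j)=-(l_1-l_2)\le-1$, and $e$ changes by at most $1$ per step. Let $c_0$ be the least $c>n-i+1$ with $e(c)\le0$ (it exists since $e(j)\le-1$); then $e(c_0-1)\ge1$, so $e(c_0)\ge0$, hence $e(c_0)=0$, while $c_0\neq j$ since $e(j)\le-1$. Thus $p_2$ and $q_2'$ share a lattice point on $X+Y=c_0$ with $n-i<c_0<j$, contradicting $p_2\cap q_2'=\{A_{l_1}\}$. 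Therefore $q_2'$, and hence $q_2$, makes an East ($a$) first step, while $p_2$ makes a North ($b$) first step.

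Assembling the four facts: $w_{A_{l_2}}^{A_k}(q_2)$ has length $|\gamma|\ge2$ and both begins and ends with $a$, so it equals $aza$ for some $z\in\Sigma^*$; likewise $w_{A_{l_1}}^{A_k}(p_2)$ both begins and ends with $b$, so it equals $bz'b$ for some $z'\in\Sigma^*$. I expect the \textbf{First steps} part to be the genuine obstacle: the condition $q_2\cap p_2=\{A_k\}$ transparently governs the terminal vertices, but $p_2\cap q_2'=\{A_{l_1}\}$ governs the initial vertices only once one has computed the endpoint of the translated path $q_2'$ and observed that it overshoots $A_k$; carrying out that bookkeeping — and keeping the correspondence between East/North steps and the letters $a/b$ straight — is the delicate part, although no serious estimate is involved.
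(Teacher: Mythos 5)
Your proof is correct and follows essentially the same route as the paper's: the condition $q_2\cap p_2=\{A_k\}$ pins down the last steps and $p_2\cap q_2'=\{A_{l_1}\}$ pins down the first steps, each via a path-crossing contradiction. The only real difference is that you make the paper's ``the paths must intersect'' claims rigorous with a discrete intermediate-value argument on $x$-coordinates along anti-diagonals, where the paper argues geometrically.
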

	
	\begin{proof}
		Let  $w_{A_{l_2}}^{A_k}(q_2)=w_{_{q_2}}$ and $w_{A_{l_1}}^{A_k}(p_2)=w_{_{p_2}}$. Then $|w_{_{q_2}}|=|w_{_{p_2}}|=|\gamma|$.
		
		\textit{Case 1:} Let $b \in \text{Suff}(w_{_{q_2}})$.
		Then $q_2$ must pass through the point $E_2$. 
		Since $q_2 \cap p_2 = \{A_k\}$,  $p_2$ reaches $A_k$ through $(k-1, j-k)$. Then as $l_1 > l_2$ and $j-k \geq Y(A_{l_2}) > Y(A_{l_1})$, $q_2$ and $p_2$ must intersect before $A_k$, which is a contradiction.

		\textit{Case 2:} Let $b \in \text{Pref}(w_{_{q_2}})$.
		Then as $w_{_{q_2}}= w_{A_{l_1}}(q_2')$, $q_2'$ pass through $E_1$. Since $p_2 \cap q_2' = \{A_{l_1}\}$, $p_2$ must pass through the point $(l_1 +1, n-i-l_1)$. 
		Since $Y(A_k) - Y(A_{k'}) =Y(A_{l_2}) -Y(A_{l_1})>0$, path $q_2'$ and $p_2$ must intersect after $A_{l_1}$, which is a contradiction.
		
		Hence, $w_{_{q_2}}$ is of the form $aza$ for some $z \in \Sigma^*$. Since $p_2 \cap q_2 =\{ A_k\}$ and $p_2 \cap q_2' =\{A_{l_1}\}$, $w_{_{p_2}}$ must be of the form $bz'b$ for some $z' \in \Sigma^*$. 
	\end{proof}
	
	\begin{remark}\label{imprem231}
		By Lemma \ref{lem1}, we have $w_{A_{l_2}}^{A_k}(q_2) =a^{n_1} b^{n_2} \cdots a^{n_{2r-1}}$ and $w_{A_{l_1}}^{A_k}(p_2) = b^{m_1}a^{m_2} \cdots b^{m_{2r'-1}}$ for some $r, r', n_1, \cdots n_{2r-1}, m_1, \cdots m_{2r'-1} \in \mathds{N}$.
	\end{remark}

	
	\begin{proposition}\label{Inticase}
		For $1 \leq|\gamma| \leq 2$, the values of $\mathcal{M}^{\textup{o}}(n)$  are as per the following:
		\begin{itemize}
         \small
			\item When $|\gamma|=1$, then $\mathcal{M}^{\textup{o}}(n)=$   
			$$2 \sum_{i=2}^{n-1} \sum_{l_1=1}^{n-i}  \sum_{m=l_1+1}^{i+l_1-1} 
			\frac{1}{(i-1)(n-i)} {n-i \choose l_1} {n-i \choose l_1-1}  {i-1 \choose m-l_1} {i-1 \choose m-l_1-1}.$$

			\item   When $|\gamma|=2$, then $\mathcal{M}^{\textup{o}}(n)=$ 
			$$2 \sum_{i=3}^{n-2} \sum_{l_1=2}^{n-i}  \sum_{m=l_1+2}^{i+l_1-2}   
			\frac{4}{(i-2)(n-i)} {n-i \choose l_1} {n-i \choose l_1-2}  {i-2 \choose m-l_1} {i-2 \choose m-l_1-2}.$$
			
			


		\end{itemize}
		
	\end{proposition}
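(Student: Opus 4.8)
The plan is to specialize the formula for $\mathcal{M}^{\textup{o}}(n)$ obtained at the end of the strategy discussion to the two cases $|\gamma| = i+j-n = 1$ and $|\gamma| = 2$. In both cases the key is to determine the middle factor ${}_{A_{l_2}}{N}_{A_{l_1}}^{A_k}$ and the locations of the points $A_k$ and $A_{k'}$; once these are known, the other two factors are given verbatim by Proposition \ref{niwe} and the remaining summations collapse to the displayed ranges.

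\emph{Pinning down the middle triplet.} For $|\gamma| = 2$, Lemma \ref{lem1} forces $w_{A_{l_2}}^{A_k}(q_2)$ to be a word of length $2$ starting and ending with $a$, hence $aa$, and likewise $w_{A_{l_1}}^{A_k}(p_2) = bb$. So $q_2$ is the path of two east steps out of $A_{l_2}$, $p_2$ the path of two north steps out of $A_{l_1}$, and $q_2' = p_{A_{l_1}}(aa)$; comparing endpoints forces $l_2 = l_1 - 2$, $k = l_1$, and $k' = l_1 + 2$, and this single candidate triplet visibly satisfies $p_2 \cap q_2 = \{A_k\}$ and $p_2 \cap q_2' = \{A_{l_1}\}$. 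Hence ${}_{A_{l_2}}{N}_{A_{l_1}}^{A_k} = 1$ on this configuration and $0$ otherwise. For $|\gamma| = 1$ Lemma \ref{lem1} is not available, so I would argue by hand: $q_2$ and $p_2$ are single steps out of $A_{l_2}$ and $A_{l_1}$ landing on the common point $A_k \in L_2$, and a check of the four east/north possibilities under the standing hypothesis $l_1 > l_2$ leaves only the case ``$p_2$ north, $q_2$ east'', which forces $l_2 = l_1 - 1$, $k = l_1$, $k' = l_1 + 1$ and again ${}_{A_{l_2}}{N}_{A_{l_1}}^{A_k} = 1$ there, $0$ elsewhere.

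\emph{Assembling the sum.} On the surviving configuration one has $l_2 = l_1 - |\gamma|$, $k = l_1$, $k' = l_1 + |\gamma|$, and $n - j = i - |\gamma|$, so Proposition \ref{niwe} yields $N_O^{A_{l_1},A_{l_2}} = \tfrac{|\gamma|}{n-i}\binom{n-i}{l_1}\binom{n-i}{l_1-|\gamma|}$ and $N_{A_k,A_{k'}}^{A_m} = \tfrac{|\gamma|}{i-|\gamma|}\binom{i-|\gamma|}{m-l_1}\binom{i-|\gamma|}{m-l_1-|\gamma|}$; thus each term carries the scalar $\tfrac{|\gamma|^2}{(i-|\gamma|)(n-i)}$, i.e. $\tfrac{1}{(i-1)(n-i)}$ when $|\gamma| = 1$ and $\tfrac{4}{(i-2)(n-i)}$ when $|\gamma| = 2$. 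It then remains to read off the index ranges: $i$ runs over the values with $j = n-i+|\gamma| \le n-1$ and with the $l_1$-sum nonempty, i.e. $|\gamma|+1 \le i \le n-|\gamma|$; $l_1$ runs from $|\gamma|$ (so that $l_2 = l_1 - |\gamma| \ge 0$) to $n-i$; and $m$ runs from $k' = l_1 + |\gamma|$ to $k + n - j = l_1 + i - |\gamma|$. Setting $|\gamma| = 1$ and $|\gamma| = 2$ gives the two claimed formulas.

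The step I expect to require genuine care is the determination of the middle triplet: for $|\gamma| = 1$ it has to be done by a direct lattice-geometry argument, since Lemma \ref{lem1} is stated only for $|\gamma| \ge 2$, and for $|\gamma| = 2$ one must be sure that Lemma \ref{lem1} really pins down the entire triplet $(q_2, p_2, q_2')$ --- and with it the positions of $A_k$ and $A_{k'}$ --- rather than merely the first and last letters of its words. Everything after that is bookkeeping: substituting into Proposition \ref{niwe} and tracking which indices survive.
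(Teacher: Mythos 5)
Your proposal is correct and follows essentially the same route as the paper: identify the unique middle configuration ($l_1-l_2=|\gamma|$, $k=l_1$, $k'=l_1+|\gamma|$, so ${}_{A_{l_2}}{N}_{A_{l_1}}^{A_k}=1$), substitute into Proposition \ref{niwe}, and read off the index ranges. The only (immaterial) difference is how the configuration is pinned down --- the paper deduces $l_1-l_2=|\gamma|$ from Lemma \ref{prop120} together with the inequality $l_1\leq k\leq l_2+|\gamma|$ and then reads off $\gamma$ and ${\gamma'}^R$, whereas you go in the reverse direction via Lemma \ref{lem1} (for $|\gamma|=2$) and a direct endpoint check (for $|\gamma|=1$); both yield the same count and the same final formulas.
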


	\begin{proof}
		Since we can move only horizontally or vertically along a lattice path,
		$\max\{l_1,l_2\} \leq k \leq \min\{l_1, l_2\} + |\gamma|$ and  
		$ \max\{k, k' \} \leq m \leq \min\{k, k'\} +n-j$ (see Fig. \ref{yytr}).
		We now compute the value of ${}_{A_{l_2}}{{N}}_{A_{l_1}}^{A_k}$ for  $1 \leq|\gamma| \leq 4$.
		
		\begin{itemize}
		    \item 
		Let $|{\gamma}|=1$, i.e., $|{\gamma'}|=i+j-n =1$.
		Then by Lemma \ref{prop120}, $||{\alpha}|_c-|{\alpha'}|_c|=1 ~\forall c \in \{a, b\}$. Let $|{\alpha}|_a-|{\alpha'}|_a=1$. Then $l_1 - l_2=1$, $A_{l_2} = (l_1 -1, n-i-l_1+1)$ and  $l_1 \leq k \leq l_1 $, i.e., $ k = l_1$ (see Fig. \ref{fig12io}). Then $Y(A_k)=j-k=n-i+1-l_1$ and $A_k = (l_1, n-i-l_1+1)$. This implies  ${\gamma} = b$ and ${\gamma'}^R=a$.
			Then $A_{k'} = (l_1 +1, n-i-l_1)$. This implies ${}_{A_{l_2}}{{N}}_{A_{l_1}}^{A_k} =1$. Now  $ l_1+1 \leq m \leq l_1 +n-j$, i.e.,  $ l_1+1 \leq m \leq l_1 +i-1$. 
			Thus, for $|\gamma|=1 $, the value of $\mathcal{M}^{\textup{o}}(n)$  is
			\begin{center}
				\small
				$ 2 \displaystyle\sum_{i=2}^{n-1} \sum_{l_1=1}^{n-i}  \sum_{m=l_1+1}^{i+l_1-1}  
				\frac{1}{(i-1)(n-i)} {n-i \choose l_1} {n-i \choose l_1-1}  {i-1 \choose m-l_1} {i-1 \choose m-l_1-1}$.
			\end{center}
            Here $ i \ge 2$ as  $|\gamma|=1,|\beta|\geq 1$.

			\begin{figure}[htbp]
		\centering
		\begin{minipage}[b]{0.4\textwidth}
			\centering
			\includegraphics[width=\linewidth]{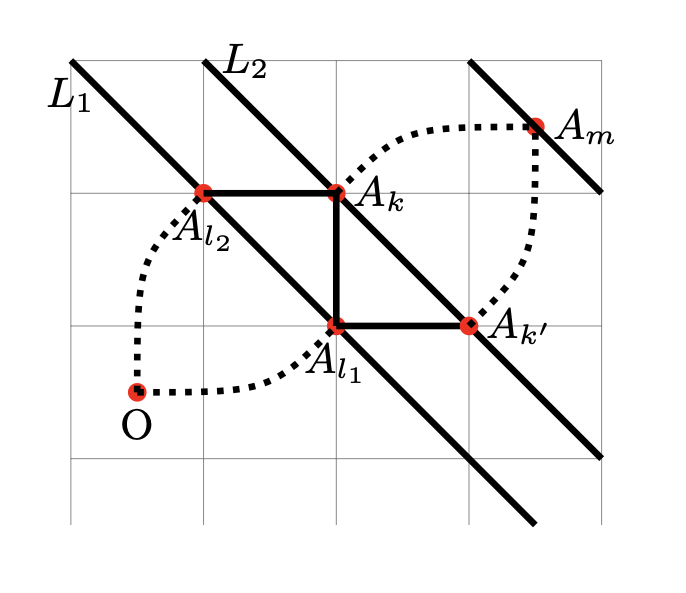}
			\caption{Illustration to the proof of Proposition \ref{Inticase} when $|\gamma|=1$.}
			\label{fig12io}
		\end{minipage}
		\hspace{1cm}
		\begin{minipage}[b]{0.4\textwidth}
			\centering
			\includegraphics[width=\linewidth]{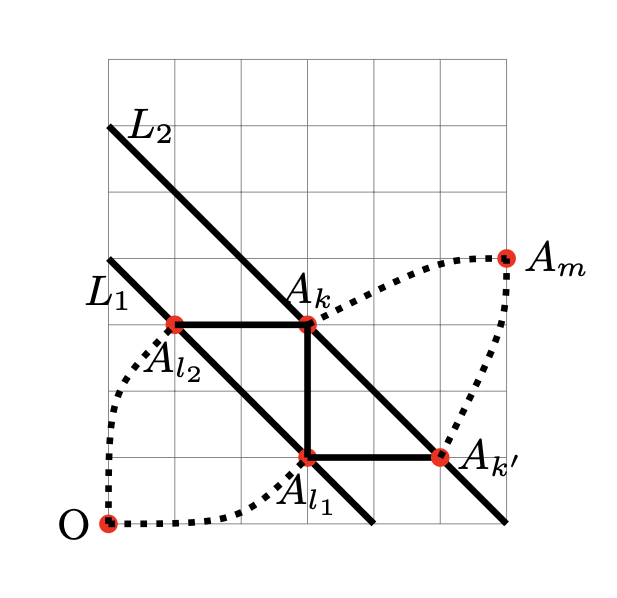}
			\caption{Illustration to the proof of Proposition \ref{Inticase} when $|\gamma|=2$.}
			\label{fi5sw}
		\end{minipage}
	\end{figure}

			\item Let $|{\gamma}| =2$, i.e., $|{\gamma'}|=i+j-n =2$. Then
			by Lemma \ref{prop120}, $||{\alpha}|_c-|{\alpha'}|_c| \geq 2 ~\forall c \in \{a, b\}$.
			Let $|{\alpha}|_a-|{\alpha'}|_a \geq 2$. Then $l_1-l_2\geq 2$. 
			Now, $l_1 \leq k \leq l_2+2$. This implies $l_1-l_2 \leq 2$. Thus $l_1-l_2=2$. Thus $A_{l_2} = (l_1-2, n-i-l_1+2)$.
			Now $l_1 \leq k \leq l_1 -2+2$, i.e., $k=l_1$ (see Fig. \ref{fi5sw}). Then $Y(A_k) = j-k= n-i+2 -l_1$ and $A_k = (k, n-i -l_1 + 2)$. This implies $\gamma=bb$ and ${\gamma'}^R=aa$. Then $A_{k'} = (l_1+2, n-i-l_1)$. Then  ${}_{A_{l_2}}{{N}}_{A_{l_1}}^{A_k}=1$.
			Now $k' \leq m \leq k+n-j$, i.e., $l_1 +2 \leq m \leq l_1 +i-2$. Thus the value of $\mathcal{M}^{\textup{o}}(n)$ for $|\gamma|=2 $ is
			\begin{center}
				\small
				$2\displaystyle \sum_{i=4}^{n-2} \sum_{l_1=2}^{n-i}  \sum_{m=l_1+2}^{i+l_1-2}  
				\frac{4}{(i-2)(n-i)} {n-i \choose l_1} {n-i \choose l_1-2}  {i-2 \choose m-l_1} {i-2 \choose m-l_1-2}$.
			\end{center}
            Here $i \geq 4$ as $|\gamma|=2$, $|\beta|\geq 2$.

		\end{itemize}
	\end{proof}

	We have verified Proposition \ref{Inticase} through a computer program and the results are given in Table \ref{tab:sample_table1213}.
	\begin{table}[h]
		\small
		\centering
		\begin{tabular}{|c|c|c|c|c|c|c|c|c|c|c|c|c|}
			\hline
			$n$ & 1 & 2 & 3 & 4 & 5 & 6 & 7 & 8 & 9 & 10 & 11 & 12 \\
			\hline
			$\mathcal{M}^{\textup{o}}(n)$ for $|\gamma|=1$   & 0 & 0 & 2 & 8 & 28 & 96 & 330 & 1144 & 4004 & 14144 & 50388 & 180880   \\
			\hline
			$\mathcal{M}^{\textup{o}}(n)$ for $|\gamma|=2$  & 0 & 0 & 0 & 0 & 0 & 2 & 16 & 88 & 416 & 1820 & 7616 & 31008   \\
			\hline
		\end{tabular}
		\caption{Values of $\mathcal{M}^{\textup{o}}(n)$, for $1 \le |\gamma| \le 2$  and $1 \leq n \leq 12$.}
		\label{tab:sample_table1213}
	\end{table}
	

	 \begin{figure}[h]
	\centering
	\begin{tikzpicture}
		\draw[step=0.5cm,gray,line width=0.1pt] (-1.5,-1) grid (11.5,10);
		
		\draw[line width=.4mm]  (7.5,9.5) -- (7.5,9.5) node[midway,right] {$A_k$};
        \draw[line width=.4mm]  (11,6) -- (11,6) node[midway,above] {$A_{k'}$};
        \draw[dotted,line width=.4mm]  (7.7,3.4) -- (7.7,3.4) node[above] {$Q$};
        \draw[dotted,line width=.4mm]  (7.8,5) -- (7.8,5) node[above] {$Q'$};
        \draw[dotted,line width=.4mm]  (5.4,7) -- (5.4,7) node[below] {$R$};
        \draw[dotted,line width=.4mm]  (9,3.5) -- (9,3.5) node[below] {$R'$};

		\draw[line width=.4mm]  (2.5,-0.5) -- (1,1) node[below] {};
		\draw[dotted, line width=.4mm]  (1,1) -- (0.5,1.5) node[below] {};
		\draw[line width=.4mm]  (0.5,1.5) -- (-1,3) node[below] {};
		\draw[line width=.4mm]  (2.5,-0.5) -- (2.5,-0.5) node[below,midway] {$A_{l_1}$};
		\fill[black] (2.5,-0.5) circle (2.5pt);
		\fill[black] (2.5,0) circle (2.5pt);
		\draw[line width=.4mm]  (2.5,0) -- (2.5,0) node[right] {$E_1$};
		\draw[line width=.4mm]  (-1,3) -- (-1,3) node[left,midway] {$A_{l_2}$};
		\fill[black] (-1,3) circle (2.5pt);

		\draw[line width=.4mm]  (-1,3) -- (-0.5,3) node[below] {};
		\draw[dotted,line width=.4mm]  (-0.5,3) -- (0.5,3) node[midway,above] {$a^{n_1}$};
		\draw[line width=.4mm]  (0.5,3) -- (1,3) node[below] {};
		
		\draw[line width=.4mm]  (1,3) -- (1,3.5) node[below] {};
		\draw[dotted,line width=.4mm]  (1,3.5) -- (1,4) node[midway,left] {$b^{n_2}$};
		\draw[line width=.4mm]  (1,4) -- (1,4.5) node[below] {};
		
		\draw[dotted, line width=.4mm] (1,4.5) .. controls (1.5,4.5) .. (1.5,5.5);
		
		\draw[line width=.4mm]  (1.5,5.5) -- (2,5.5) node[below] {};
		\draw[dotted,line width=.4mm]  (2,5.5) -- (2.5,5.5) node[midway,above] {$a^{n_{2l-1}}$};
		\draw[line width=.4mm]  (2.5,5.5) -- (3,5.5) node[below] {};
        \draw[line width=.4mm]  (3,5.5) -- (3,5.5) node[right] {$B_2$};
		
		\draw[line width=.4mm]  (3,5.5) -- (3,6) node[below] {};
		\draw[dotted,line width=.4mm]  (3,6) -- (3,6.5) node[midway,left] {$b^{n_{2l}}$};
		\draw[line width=.4mm]  (3,6.5) -- (3,7) node[below] {}; 
        \draw[line width=.4mm]  (3,7) -- (3,7) node[above] {$B_4$}; 
		
		\draw[line width=.4mm]  (3,7) -- (3.5,7) node[below] {};
		\draw[dotted,line width=.4mm]  (3.5,7) -- (5,7) node[midway,above] {$a^{n_{2l+1}}$};
		\draw[line width=.4mm]  (5,7) -- (5.5,7) node[below] {}; 
		
		\draw[dotted, line width=.4mm] (5.5,7) .. controls (5.5,7.5) .. (6,8);
		
		\draw[line width=.4mm]  (6,8) -- (6,8.5) node[below] {};
		\draw[dotted,line width=.4mm]  (6,8.5) -- (6,9) node[midway,left] {$b^{n_{2r-2}}$};
		\draw[line width=.4mm]  (6,9) -- (6,9.5) node[below] {}; 
		
		\draw[line width=.4mm]  (6,9.5) -- (6.5,9.5) node[below] {};
		\draw[dotted,line width=.4mm]  (6.5,9.5) -- (7,9.5) node[midway,above] {$a^{n_{2r-1}}$};
		\draw[line width=.4mm]  (7,9.5) -- (7.5,9.5) node[below] {};
		\fill[black] (7.5,9.5) circle (2.5pt);
		
		\fill[black] (7.5,9) circle (2.5pt);
		\draw[line width=.4mm]  (7.5,9) -- (7.5,9) node[midway,right] {$E_2$};
		
		\draw[->, line width=.4mm]  (5.8,9.3)  -- (4.7,9.3) node[below] {};
		\draw[ line width=.5mm]  (4.7,9.3)  -- (4.7,9.3) node[left] {path $q_2$ };

		\draw[line width=.4mm]  (2.5,-0.5) -- (3,-0.5) node[below] {};
		\draw[dotted,line width=.4mm]  (3,-0.5) -- (4,-0.5) node[midway,below] {$a^{n_1}$};
		\draw[line width=.4mm]  (4,-0.5) -- (4.5,-0.5) node[below] {};
		
		\draw[line width=.4mm]  (4.5,-0.5) -- (4.5,0) node[below] {};
		\draw[dotted,line width=.4mm]  (4.5,0) -- (4.5,0.5) node[midway,right] {$b^{n_2}$};
		\draw[line width=.4mm]  (4.5,0.5) -- (4.5,1) node[below] {};
		
		\draw[dotted, line width=.4mm] (4.5,1) .. controls (5,1) .. (5,2);
		
		\draw[line width=.4mm]  (5,2) -- (5.5,2) node[below] {};
		\draw[dotted,line width=.4mm]  (5.5,2) -- (6,2) node[midway,below] {$a^{n_{2l-1}}$};
		\draw[line width=.4mm]  (6,2) -- (6.5,2) node[below] {}; 
        \draw[line width=.4mm]  (6.5,2) -- (6.5,2) node[right] {$B_1$}; 
		
		\draw[line width=.4mm]  (6.5,2) -- (6.5,2.5) node[below] {};
		\draw[dotted,line width=.4mm]  (6.5,2.5) -- (6.5,3) node[midway,right] {$b^{n_{2l}}$};
		\draw[line width=.4mm]  (6.5,3) -- (6.5,3.5) node[below] {}; 
		\draw[line width=.4mm]  (6.6,3.5) -- (6.6,3.5) node[right,above] {$B_3$};
        
		\draw[line width=.4mm]  (6.5,3.5) -- (7,3.5) node[below] {};
		\draw[dotted,line width=.4mm]  (7,3.5) -- (8.5,3.5) node[midway,below] {$a^{n_{2l+1}}$};
		\draw[line width=.4mm]  (8.5,3.5) -- (9,3.5) node[below] {};
		
		\draw[dotted, line width=.5mm] (9,3.5) .. controls (9,4) .. (9.5,4.5);

		\draw[line width=.4mm]  (9.5,4.5) -- (9.5,5) node[below] {};
		\draw[dotted,line width=.4mm]  (9.5,5) -- (9.5,5.5) node[midway,right] {$b^{n_{2r-2}}$};
		\draw[line width=.4mm]  (9.5,5.5) -- (9.5,6) node[below] {};
		
		\draw[line width=.4mm]  (9.5,6) -- (10,6) node[below] {};
		\draw[dotted,line width=.4mm]  (10,6) -- (10.5,6) node[midway,below] {$a^{n_{2r-1}}$};
		\draw[line width=.4mm]  (10.5,6) -- (11,6) node[below] {};
		\fill[black] (11,6) circle (2.5pt);

		\draw[->, line width=.4mm]  (8.5,2.5)  -- (8.5,3.3) node[below] {};
		\draw[ line width=.5mm]  (8.5,2.4)  -- (8.5,2.4) node[below] {path $q_2'$ };

		\draw[line width=.4mm]  (4.5,-0.5) -- (3,1) node[below] {};
		\draw[dotted,line width=.4mm]  (3,1) -- (2.5,1.5) node[below] {};
		\draw[line width=.4mm]  (2.5,1.5) -- (1,3) node[below] {};
		\draw[line width=.4mm]  (4.5,1) -- (3,2.5) node[below] {};
		\draw[dotted,line width=.4mm]  (3,2.5) -- (2.5,3) node[below] {};
		\draw[line width=.4mm]  (2.5,3) -- (1,4.5) node[below] {};
		\draw[line width=.4mm]  (5,2) -- (3.5,3.5) node[below] {};
		\draw[dotted,line width=.4mm]  (3.5,3.5) -- (3,4) node[below] {};
		\draw[line width=.4mm]  (3,4) -- (1.5,5.5) node[below] {};
		\draw[line width=.4mm]  (7.2,1.3) -- (5,3.5) node[below] {};
		\draw[dotted,line width=.4mm]  (5,3.5) -- (4.5,4) node[below] {};
		\draw[line width=.4mm]  (4.5,4) -- (3,5.5) node[below] {};
		\draw[->, line width=.4mm]  (7.2,1.4)  -- (7.7,1.4) node[right] {$X+Y= n-i+ \displaystyle \sum_{i=1}^{2l-1} n_i$};
		\draw[line width=.4mm]  (6.5,3.5) -- (5,5) node[below] {};
		\draw[dotted,line width=.4mm]  (5,5) -- (4.5,5.5) node[below] {};
		\draw[line width=.4mm]  (4.5,5.5) -- (2.5,7.5) node[below] {};
		\draw[->, line width=.4mm]  (2.6,7.1)  -- (2,7.1) node[left] {$X+Y= n-i+ \displaystyle \sum_{i=1}^{2l} n_i$};
		\draw[line width=.4mm]  (9,3.5) -- (7,5.5) node[below] {};
		\draw[dotted,line width=.4mm]  (7,5.5) -- (6.5,6) node[below] {};
		\draw[line width=.4mm]  (6.5,6) -- (4.5,8) node[below] {};
		\draw[<-, dotted, line width=0.4mm] (4.4,8) .. controls (3.8,8.3) .. (3.4,8.3) node[left] {$X+Y= n-i+ \displaystyle \sum_{i=1}^{2l+1} n_i$};
		\draw[line width=.4mm]  (9.5,4.5) -- (8,6) node[below] {};
		\draw[dotted,line width=.4mm]  (8,6) -- (7.5,6.5) node[below] {};
		\draw[line width=.4mm]  (7.5,6.5) -- (6,8) node[below] {};
		\draw[line width=.4mm]  (9.5,6) -- (8,7.5) node[below] {};
		\draw[dotted,line width=.4mm]  (8,7.5) -- (7.5,8) node[below] {};
		\draw[line width=.4mm]  (7.5,8) -- (6,9.5) node[below] {};

		\draw[line width=.5mm]  (7.5,9.5) -- (7.5,9) node[below] {};
		\draw[dotted,line width=.5mm]  (7.5,9) -- (7.5,1) node[below] {};
		\draw[line width=.5mm]  (7.5,1) -- (7.5,0.5) node[below] {$x=k$};
		
		\draw[line width=0.4mm] (2.5,-0.5) -- (2.5,0) node[left] {};
		\draw[-, line width=0.4mm] (2.5,0) .. controls (3,0.5) .. (3,1) node[left] {};
		\draw[-, line width=0.4mm] (3,1) .. controls (3.5,1.5) .. (3.5,2) node[left] {};
		\draw[-,dotted, line width=0.4mm] (3.5,2) .. controls (3.5,2.5) .. (4,3) node[left] {};
		\draw[-, line width=0.4mm] (4,3) .. controls (4.5,3.5) .. (4.5,4) node[left] {};
		\draw[-, line width=0.4mm] (4.5,4) .. controls (4.5,4.5) .. (5,5) node[left] {};
		\draw[-, line width=0.4mm] (5,5) .. controls (6,5.4) .. (6,6.5) node[left] {};
		\draw[-,dotted, line width=0.4mm] (6,6.5) .. controls (6.3,6.8) .. (6.5,7.5) node[left] {};
		\draw[-, line width=0.4mm] (6.5,7.5) .. controls (6.5,8) .. (7,8.5) node[left] {};
		\draw[-, line width=0.4mm] (7,8.5) .. controls (7.3,8.7) .. (7.5,9) node[left] {};
		\draw[->,dotted, line width=0.4mm] (0,0.5) .. controls (2.5,1) .. (2.8,0.6) node[below] {};
		\draw[-, line width=0.4mm] (0,0.5) -- (0,0.5) node[left] {path $p_2$};

		\fill[black] (4,0) circle (1.5pt);
		\draw (4,0) circle (0.15cm);
		
		\fill[black] (3.5,0.5) circle (1.5pt);
		\draw (3.5,0.5) circle (0.15cm);
		
		\fill[black] (3,1) circle (1.5pt);
		\draw (3,1) circle (0.15cm);
		
		\fill[black] (2.5,1.5) circle (1.5pt);
		\draw (2.5,1.5) circle (0.15cm);
		
		\fill[black] (2,2) circle (1.5pt);
		\draw (2,2) circle (0.15cm);
		
		\fill[black] (1.5,2.5) circle (1.5pt);
		\draw (1.5,2.5) circle (0.15cm);
		
		\fill[black] (4,1.5) circle (1.5pt);
		\draw (4,1.5) circle (0.15cm);
		
		\fill[black] (3.5,2) circle (1.5pt);
		\draw (3.5,2) circle (0.15cm);
		
		\fill[black] (3,2.5) circle (1.5pt);
		\draw (3,2.5) circle (0.15cm);
		
		\fill[black] (2.5,3) circle (1.5pt);
		\draw (2.5,3) circle (0.15cm);
		
		\fill[black] (2,3.5) circle (1.5pt);
		\draw (2,3.5) circle (0.15cm);
		
		\fill[black] (1.5,4) circle (1.5pt);
		\draw (1.5,4) circle (0.15cm);

		\fill[black] (4.5,2.5) circle (1.5pt);
		\draw (4.5,2.5) circle (0.15cm);
		
		\fill[black] (4,3) circle (1.5pt);
		\draw (4,3) circle (0.15cm);
		
		\fill[black] (3.5,3.5) circle (1.5pt);
		\draw (3.5,3.5) circle (0.15cm);
		
		\fill[black] (3,4) circle (1.5pt);
		\draw (3,4) circle (0.15cm);
		
		\fill[black] (2.5,4.5) circle (1.5pt);
		\draw (2.5,4.5) circle (0.15cm);
		
		\fill[black] (2,5) circle (1.5pt);
		\draw (2,5) circle (0.15cm);
		
		\fill[black] (6,2.5) circle (1.5pt);
		\draw (6,2.5) circle (0.15cm);
		
		\fill[black] (5.5,3) circle (1.5pt);
		\draw (5.5,3) circle (0.15cm);
		
		\fill[black] (5,3.5) circle (1.5pt);
		\draw (5,3.5) circle (0.15cm);
		
		\fill[black] (4.5,4) circle (1.5pt);
		\draw (4.5,4) circle (0.15cm);
		
		\fill[black] (4,4.5) circle (1.5pt);
		\draw (4,4.5) circle (0.15cm);
		
		\fill[black] (3.5,5) circle (1.5pt);
		\draw (3.5,5) circle (0.15cm);
		
		\fill[black] (6,4) circle (1.5pt);
		\draw (6,4) circle (0.15cm);
		
		\fill[black] (5.5,4.5) circle (1.5pt);
		\draw (5.5,4.5) circle (0.15cm);
		
		\fill[black] (5,5) circle (1.5pt);
		\draw (5,5) circle (0.15cm);
		
		\fill[black] (4.5,5.5) circle (1.5pt);
		\draw (4.5,5.5) circle (0.15cm);
		
		\fill[black] (4,6) circle (1.5pt);
		\draw (4,6) circle (0.15cm);
		
		\fill[black] (3.5,6.5) circle (1.5pt);
		\draw (3.5,6.5) circle (0.15cm);

		\fill[black] (7.5,5) circle (1.5pt);
		\draw (7.5,5) circle (0.15cm);
		
		\fill[black] (7,5.5) circle (1.5pt);
		\draw (7,5.5) circle (0.15cm);
		
		\fill[black] (6.5,6) circle (1.5pt);
		\draw (6.5,6) circle (0.15cm);
		
		\fill[black] (6,6.5) circle (1.5pt);
		\draw (6,6.5) circle (0.15cm);
		
		\fill[black] (7.5,6.5) circle (1.5pt);
		\draw (7.5,6.5) circle (0.15cm);
		
		\fill[black] (7,7) circle (1.5pt);
		\draw (7,7) circle (0.15cm);
		
		\fill[black] (6.5,7.5) circle (1.5pt);
		\draw (6.5,7.5) circle (0.15cm);

		\fill[black] (7.5,8) circle (1.5pt);
		\draw (7.5,8) circle (0.15cm);
		
		\fill[black] (7,8.5) circle (1.5pt);
		\draw (7,8.5) circle (0.15cm);
		
		\fill[black] (6.5,9) circle (1.5pt);
		\draw (6.5,9) circle (0.15cm);

	\end{tikzpicture}
	\caption{Illustration to the proof of Lemma \ref{nbve} and Proposition \ref{Prop1}. Here, big dots surrounded by circles denote internal points.}
	\label{fig3n1}
\end{figure}

	For $r\in \mathds{N}$, consider $L'_r: X+Y= n-i + \sum_{t=1}^{r} n_t$ and $\mathtt{IP}_{q{_{_2}},q_2'}(L'_r)$, in short we write $\mathtt{IP}_{q_{_2},q_2'}(r)$. We use these notations only in Lemma \ref{nbve}.
	
	\begin{lemma}\label{nbve}
    Let $w_{A_{l_2}}^{A_k}(q_2)=w_{A_{l_1}}^{A_{k'}}(q_2') =a^{n_{1}} b^{n_{2}} a^{n_{3}}\cdots  a^{n_{2r-1}}$ for some lattice paths $q_2, q_2'$ where  $l_1 - l_2 \geq 2$, $r \geq 2$ and each $n_t \geq 1$. Then for any $1\leq s \leq 2r-2$, the lattice paths between the points of   $\mathtt{IP}_{q_{_2},q_{2}'}(s)$  and $\mathtt{IP}_{q_{_2},q_{2}'}({s+1})$ do not intersect $q_2$ and $q_2'$. 
		Also, lattice paths from the point $E_1$ to $\mathtt{IP}_{q_{_2},q_{2}'}(1)$, and that from the points of $\mathtt{IP}_{q_{_2},q_{2}'}({2r-2})$ to the point $E_2$ do not intersect $q_2$ and $q_2'$. 
	\end{lemma}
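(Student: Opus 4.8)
The plan is to exploit the rigid relation between $q_2$ and $q_2'$. Since both paths spell the same word $a^{n_1}b^{n_2}\cdots a^{n_{2r-1}}$ but start at $A_{l_2}$ and $A_{l_1}$, the path $q_2'$ is exactly the translate of $q_2$ by the vector $\vec v = A_{l_1}-A_{l_2} = (d,-d)$, where $d := l_1-l_2 \ge 2$. Thus, whenever both paths cross a line $X+Y=c$, they cross it at points $P$ and $P+\vec v$, and the internal points of that line with respect to $q_2,q_2'$ are exactly the points $P+(e,-e)$ for $1\le e\le d-1$, a nonempty set lying strictly between the two crossing points. I would also record that, since each block of the word is monochromatic and $X+Y$ strictly increases along any lattice path, the portion of $q_2$ (equivalently, of $q_2'$) lying in a strip $c\le X+Y\le c'$ that spans part of a single block is a single straight segment --- horizontal at a fixed height if that block consists of $a$'s, vertical at a fixed abscissa if it consists of $b$'s --- and it is the only portion of the path meeting that strip.

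The heart of the argument would be a single claim, used three times. \emph{Claim.} Suppose $c\le c'$ are such that the portion of $q_2$ with $c\le X+Y\le c'$ is a single monochromatic straight segment, and let $C,D$ be internal points (with respect to $q_2,q_2'$) of the lines $X+Y=c$ and $X+Y=c'$, respectively. Then every lattice path $\rho$ from $C$ to $D$ is disjoint from $q_2\cup q_2'$. To prove the claim: $\rho$ is confined to the closed strip $c\le X+Y\le c'$, outside of which $q_2$ and $q_2'$ cannot be met. If the spanned block consists of $a$'s, let $P$ be the crossing of $q_2$ with $X+Y=c$, write $C=P+(e,-e)$ with $1\le e\le d-1$, and let $e'\in\{1,\dots,d-1\}$ be the analogous offset of $D$ from the crossing of $q_2$ with $X+Y=c'$; existence of $\rho$ forces $e'\le e$, so the $Y$-coordinate along $\rho$ stays in $[Y(P)-e,\ Y(P)-e']\subseteq (Y(P)-d,\ Y(P))$. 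Inside the strip $q_2$ sits at height $Y(P)$ and $q_2'$ at height $Y(P)-d$, so $\rho$ meets neither. The case where the block consists of $b$'s is identical with the roles of $X$ and $Y$ exchanged, and the case $c=c'$ is trivial.

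With the claim available I would finish as follows. For the transition between $\mathtt{IP}_{q_2,q_2'}(s)$ and $\mathtt{IP}_{q_2,q_2'}(s+1)$ with $1\le s\le 2r-2$, apply the claim with $c = n-i+\sum_{t=1}^{s}n_t$ and $c' = n-i+\sum_{t=1}^{s+1}n_t$, whose strip spans exactly block $s+1$. For the extreme segments, I would first identify $E_1 = A_{l_1}+(0,1)$ as the internal point $P+(d-1,-(d-1))$ of the line $X+Y = n-i+1$, where $P = A_{l_2}+(1,0)$ is the crossing of $q_2$ with that line (using that block $1$ consists of $a$'s); since the strip $n-i+1\le X+Y\le n-i+n_1$ spans the remainder of block $1$, the claim shows every lattice path from $E_1$ into $\mathtt{IP}_{q_2,q_2'}(1)$ is disjoint from $q_2\cup q_2'$. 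Symmetrically, $E_2 = A_k-(0,1)$ is the internal point of $X+Y=j-1$ adjacent to the crossing of $q_2$ with that line, and the claim applied to the strip $j-n_{2r-1}\le X+Y\le j-1$ (the remainder of the last block) handles every lattice path from $\mathtt{IP}_{q_2,q_2'}(2r-2)$ to $E_2$.

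The one step requiring genuine care, and the place where a careless write-up would split into several ad hoc cases, is the identification of $E_1$ and $E_2$ as internal points of suitably shifted block-boundary lines; once that is set up, the two extreme segments fall under the same coordinate estimate as the generic transition, and what remains is only bookkeeping with strict versus non-strict inequalities and with the parity of the block index.
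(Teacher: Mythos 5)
Your proof is correct and follows essentially the same route as the paper's: both arguments confine any connecting lattice path to the strip between two consecutive block-boundary lines and use the monotonicity of $X$ (for $b$-blocks) or $Y$ (for $a$-blocks) to keep it strictly between the two translated copies $q_2$ and $q_2' = q_2 + (l_1-l_2,\,-(l_1-l_2))$. Your packaging of this as a single reusable claim, together with the explicit identification of $E_1$ and $E_2$ as internal points of the lines $X+Y=n-i+1$ and $X+Y=j-1$, actually supplies the detail that the paper dispatches with ``taking clue from Fig.~\ref{fig3n1}'', so nothing is missing.
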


	\begin{proof}
		For $1 \leq l \leq {r-1}$, let $L'_{2l-1}$ intersects $q_2'$ and $q_2$ at the points $B_1$ and $B_2$, respectively (see Fig. \ref{fig3n1}). Also, let $L'_{2l}$ intersects $q_2'$ and $q_2$ at the points $B_3$ and $B_4$, respectively (see Fig. \ref{fig3n1}). By geometric argument,  $X(B_2)=X(B_4)$ and $X(B_1)=X(B_3)$.
		If  $(X_1,Y_1)\in \mathtt{IP}_{q_{_2},q_{2}'}(2l-1)$ and  $(X_2,Y_2) \in  \mathtt{IP}_{q_{_2},q_{2}'}(2l)$, then  $X(B_2)+1 \leq X_1 \leq X(B_1)-1$ and $X(B_4)+1 \leq X_2 \leq X(B_3)-1$.  
		If $p'$ is any lattice path between $(X_1, Y_1)$ and $(X_2, Y_2)$ and $C$ is any point on $p'$, then as we can move only horizontally or vertically along a lattice path, $X(B_2)+1 \leq X(C)\leq X(B_1)-1$. Thus $p'$ never intersects $q_2$ and $q_2'$. 
		Therefore any lattice path (if it exists) between $(X_1, Y_1)$ and $(X_2, Y_2)$ do not intersect $q_2'$ and $q_2$. Since $(X_1, Y_1)$ and $(X_2, Y_2)$ are arbitrary, this holds between any two points of $ \mathtt{IP}_{q_{_2},q_{2}'}(2l-1)$ and   $\mathtt{IP}_{q_{_2},q_{2}'}(2l)$. Similar things happen between the points of  $\mathtt{IP}_{q_{_2},q_{2}'}(2l)$ and  $\mathtt{IP}_{q_{_2},q_{2}'}(2l+1)$. This proves the first part of the statement.
		Taking clue from Fig. \ref{fig3n1}, one can prove the remaining parts of the statement. 
	\end{proof}

	We now compute $ {}_{A_{l_2}}{{N}}_{A_{l_1}}^{A_k}$ when $|\gamma|\geq 3$.
    To do that, we first find the coordinates of internal points. 

    	\begin{lemma}\label{internal points}
         Let $w_{A_{l_2}}^{A_k}(q_2)=w_{A_{l_1}}^{A_{k'}}(q_2') =a^{n_{1}} b^{n_{2}} a^{n_{3}}\cdots  a^{n_{2r-1}}$ for some lattice paths $q_2, q_2'$ where  $l_1 - l_2 \geq 2$, $r \geq 2$ and each $n_t \geq 1$. Let $ k = l_1+n_1+n_3+\cdots + n_{2l-1} + n_{2l+1}'$ for some $0 \leq l\leq r-1$ where $n_{2l+1}=n_{2l+1}' + n_{2l+1}''$ with $n_{2l+1}'\geq 0 $ and $n_{2l+1}'' > 0$.
		Then  
            \begin{enumerate}[label=\rm (\roman*)]
            \tiny
                \item For any $r' \leq l$,  
                \begin{itemize}
                    \item  $\mathtt{IP}_{q_{_2},q_2'}(X+Y = n-i+ \sum_{i'=1}^{2r'-1}n_{i'} ~\&~ X=k) = $
		$$ \{  (X(A_{l_2})+ \sum_{i'=1}^{r'}n_{2i'-1} + s_{2r'-1}, Y(A_{l_2})+ \sum_{i'=1}^{r'-1}n_{2i'} - s_{2r'-1} ) : 1 \leq s_{2r'-1} \leq l_1- l_2 -1  \} .$$

                    \item    $\mathtt{IP}_{q_{_2},q_2'}(X+Y = n-i  + \sum_{i'=1}^{2r'}n_{i'}~\&~ X=k) =$ 
		$$ \{ (X(A_{l_2}) + \sum_{i'=1}^{r'}n_{2i'-1}  + t_{2r'}, Y(A_{l_2})+ \sum_{i'=1}^{r'}n_{2i'} - t_{2r'}) : 1 \leq t_{2r'} \leq l_1- l_2 -1  \}.$$ 
                \end{itemize}
                
                \item $\mathtt{IP}_{q_{_2},q_2'}(X+Y = n-i+\sum_{i'=1}^{2l+1}n_{i'}~ \&~ X=k) =$
                $$ \{ (X(A_{l_2}) +\sum_{i'=1}^{l+1}n_{2i'-1} + s_{2l+1}, Y(A_{l_2})+\sum_{i'=1}^{l}n_{2i'} - s_{2l+1}) : 1 \leq s_{2l+1} \leq l_1- l_2 - n_{2l+1}''  \}$$

                \item $\mathtt{IP}_{q_{_2},q_2'}(X+Y = n-i+\sum_{i'=1}^{2l+2}n_{i'} ~\&~ X=k) =$
                $$ \{ (X(A_{l_2}) + \sum_{i'=1}^{l+1}n_{2i'-1} + t_{2l+2}, Y(A_{l_2})+\sum_{i'=1}^{l+1}n_{2i'} - t_{2l+2}) : 1 \leq t_{2l+2} \leq l_1- l_2 - n_{2l+1}''  \}$$

                \item For an odd integer $3 \leq s \leq 2r-2l-3$, 
                \begin{itemize}
                    \item $\mathtt{IP}_{q_{_2},q_2'}(X+Y = n-i+  \sum_{i'=1}^{2l+s}n_{i'}  ~\&~ X=k) = $
            $$\{ (X(A_{l_2})+ \sum_{i'=1}^{\frac{2l+s+1}{2}}n_{2i'-1}  + s_{2l+s}, Y(A_{l_2})+\sum_{i'=1}^{\frac{2l+s-1}{2}}n_{2i'} - s_{2l+s}) : 1 \leq s_{2l+s} \leq l_1- l_2 - n_{2l+1}'' -\sum_{i'=1}^{\frac{s-1}{2}}n_{2l+2i'+1} \}$$

            \item $\mathtt{IP}_{q_{_2},q_2'}(X+Y = n-i+\sum_{i'=1}^{2l+s+1}n_{i'}~\&~ X=k) =$
            $$ \{ (X(A_{l_2})+\sum_{i'=1}^{\frac{2l+s+1}{2}}n_{2i'-1}  + t_{2l+s+1}, Y(A_{l_2})+\sum_{i'=1}^{\frac{2l+s+1}{2}}n_{2i'} - t_{2l+s+1}) : 1 \leq t_{2l+s+1} \leq l_1- l_2 - n_{2l+1}'' -\sum_{i'=1}^{\frac{s-1}{2}}n_{2l+2i'+1} \}.$$  
                \end{itemize}

            \end{enumerate}	 
	   \end{lemma}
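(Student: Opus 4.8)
The plan is to exploit that $q_2$ and $q_2'$ spell the \emph{same} word $w := w_{A_{l_2}}^{A_k}(q_2) = w_{A_{l_1}}^{A_{k'}}(q_2') = a^{n_1}b^{n_2}\cdots a^{n_{2r-1}}$ and differ only by a translation: since $X(A_{l_1}) - X(A_{l_2}) = l_1 - l_2 = Y(A_{l_2}) - Y(A_{l_1})$, the path $q_2'$ is the image of $q_2$ under the shift $(l_1-l_2)(1,-1)$. I would first record two elementary observations. The first: a lattice path is strictly increasing in the quantity $X+Y$, so for any factorization $w = w'w''$ the anti-diagonal $X+Y = (n-i) + |w'|$ meets $q_2$ in the single point $A_{l_2} + (|w'|_a, |w'|_b)$ and meets $q_2'$ in the single point $A_{l_1} + (|w'|_a, |w'|_b)$. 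The second: since $q_2' = q_2 + (l_1-l_2)(1,-1)$ and both paths are monotone, $q_2 \cap q_2' = \emptyset$, and on every such anti-diagonal the $q_2$-point lies exactly $l_1-l_2$ units to the upper left of the $q_2'$-point; hence the internal points on that anti-diagonal are precisely the $l_1-l_2-1$ lattice points obtained from the $q_2$-point by $1, 2, \dots, l_1-l_2-1$ unit steps to the lower right along the line. Taking $w'$ to be the first $s$ blocks of $w$ (with $s$ equal to $2r'-1$, $2r'$, $2l+1$, $2l+2$, $2l+s$ or $2l+s+1$ in the respective items), this already produces all the coordinate formulas of (i)--(iv), with the parameters $s_\bullet, t_\bullet$ in the role of the right-shift; what remains is to decide, in each regime, which of these internal points satisfy the cutoff $X \le k$.

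For the cutoff I would use that $A_k$, the endpoint of $q_2$, has $X$-coordinate $k$, so on the anti-diagonal through the end of the first $s$ blocks of $w$ the $X$-coordinate of the $q_2$-point is $k - c_s$, where $c_s$ denotes the number of occurrences of $a$ in the part of $w$ following its first $s$ blocks. Together with the second observation, the internal points of that line with $X \le k$ are exactly those whose right-shift is at most $\min\{\,l_1-l_2-1,\ c_s\,\}$. To evaluate this minimum I would extract the identity hidden in the hypothesis on $k$: comparing $k = X(A_{l_2}) + |w|_a = l_2 + \sum_{i'=1}^{r} n_{2i'-1}$ (forced by the first observation) with the assumed $k = l_1 + \sum_{i'=1}^{l} n_{2i'-1} + n_{2l+1}'$ and using $n_{2l+1} = n_{2l+1}' + n_{2l+1}''$, one obtains
\[
\sum_{i'=l+2}^{r} n_{2i'-1} + n_{2l+1}'' = l_1 - l_2 .
\]

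It then remains only to count $a$'s in the relevant tail in each case, which the displayed identity makes routine. If $s \in \{2r'-1, 2r'\}$ with $r' \le l$ (item (i)), the tail after the first $s$ blocks still contains all of the $a$-blocks $n_{2l+1}, n_{2l+3}, \dots, n_{2r-1}$, so $c_s \ge n_{2l+1}' + (l_1-l_2) \ge l_1-l_2$ and the minimum is $l_1-l_2-1$: every internal point survives. If $s = 2l+1$ or $s = 2l+2$ (items (ii) and (iii)), the tail contains exactly the $a$-blocks $n_{2l+3}, \dots, n_{2r-1}$, so $c_s = \sum_{i'=l+2}^{r} n_{2i'-1} = l_1-l_2-n_{2l+1}''$, and since $n_{2l+1}'' \ge 1$ this is the minimum, yielding the bound $l_1-l_2-n_{2l+1}''$. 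Finally, for odd $s$ with $3 \le s \le 2r-2l-3$, working with the anti-diagonal through the first $2l+s$ (resp.\ $2l+s+1$) blocks (item (iv)), the tail has lost, relative to the case $2l+1$, exactly the $a$-blocks $n_{2l+3}, n_{2l+5}, \dots, n_{2l+s}$ (the intervening $b$-blocks leave $c_s$ unchanged), so $c_s = (l_1-l_2-n_{2l+1}'') - \sum_{i'=1}^{(s-1)/2} n_{2l+2i'+1}$, again the minimum, producing the stated ranges. The only real difficulty in all of this is bookkeeping: one must keep the block indices, the parity of each anti-diagonal's index, and the split $n_{2l+1} = n_{2l+1}' + n_{2l+1}''$ mutually consistent; the geometry reduces entirely to the two observations above together with the displayed identity, and Fig.~\ref{fig3n1} serves as the organising picture.
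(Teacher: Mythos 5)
Your proposal is correct and follows essentially the same route as the paper: both rest on the observation that $q_2'$ is the translate of $q_2$ by $(l_1-l_2)(1,-1)$, on anti-diagonal cross-sections, and on the identity $l_1-l_2 = n_{2l+1}''+n_{2l+3}+\cdots+n_{2r-1}$ extracted from the two expressions for $k$; your quantity $c_s$ is exactly the paper's $Y(R^{(s)})-Y(Q^{(s)})$. Your packaging via $\min\{l_1-l_2-1,\,c_s\}$ is slightly more uniform and, unlike the paper, supplies an explicit argument for part (i) (where the paper defers to the figure), but the underlying computation is the same.
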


       \begin{proof}
       
       Let $w_{A_{l_2}}^{A_k}(q_2)=w_{A_{l_1}}^{A_{k'}}(q_2') =a^{n_{1}} b^{n_{2}} a^{n_{3}}\cdots  a^{n_{2r-1}}$ for some lattice paths $q_2, q_2'$.
       Then by Figure \ref{fig3n1}, 
		\begin{align}
			k &= X(A_{l_2}) + n_{1} + n_{3} + \cdots + n_{2r-1} \label{equq1} \\
			j - k &= Y(A_{l_2}) + n_{2} + n_{4} + \cdots + n_{2r-2}. \label{equq2}
		\end{align}
        Let $k= X(A_{l_1}) + n_{1} + n_{3} + \cdots + n_{2l-1} + n_{2l+1}'$ for some $0 \leq l \leq r-1$ where $n_{2l+1}=n_{2l+1}' + n_{2l+1}''$ with $n_{2l+1}' \geq 0$ and $n_{2l+1}'' > 0$.
        Then from Equation (\ref{equq1}), 
		$ X(A_{l_1}) + n_{1} + n_{3} + \cdots + n_{2l-1} + n_{2l+1}'   =  X(A_{l_2}) + n_{1} + n_{3} + \cdots + n_{2r-1} $.  
		This implies
		\begin{equation}\label{foundl}
			l_1-l_2 = n_{2l+1}'' + n_{2l+3} +\cdots + n_{2r-1}.
		\end{equation}
        Let $r' \leq l$. Then from Figure \ref{fig3n1}, it is straightforward that 
        \begin{itemize}
            \item $\mathtt{IP}_{q_2,q_2'}(X+Y = n-i+ \sum_{i'=1}^{2r'-1}n_{i'} ~\&~ X=k) = $
		$$ \{  (X(A_{l_2})+ \sum_{i'=1}^{r'}n_{2i'-1} + s_{2r'-1}, Y(A_{l_2})+ \sum_{i'=1}^{r'-1}n_{2i'} - s_{2r'-1} ) : 1 \leq s_{2r'-1} \leq l_1- l_2 -1  \} .$$

        \item $\mathtt{IP}_{q_2,q_2'}(X+Y = n-i  + \sum_{i'=1}^{2r'}n_{i'}~\&~ X=k) =$ 
		$$ \{ (X(A_{l_2}) + \sum_{i'=1}^{r'}n_{2i'-1}  + t_{2r'}, Y(A_{l_2})+ \sum_{i'=1}^{r'}n_{2i'} - t_{2r'}) : 1 \leq t_{2r'} \leq l_1- l_2 -1  \}.$$ 
        \end{itemize}
        This proves $(i)$.

        The line $X=k$ intersects the path $q_2$ only at the point $A_k$. 
        From Figure \ref{fig3n1}, it is evident that $(k, Y(A_{l_1}) + n_2+ n_4 +\cdots+ n_{2l})$ is a point of intersection of the path $q_2'$ and line $X=k$. Let us call this point $Q$.
		If $Q'$ is the point of intersection of lines $X=k$ and $X+Y = n-i+n_1+n_2+\cdots+ n_{2l+1}$, then $Q'=(k,  n-i+n_1+n_2+\cdots n_{2l+1}-k)$. 
        Let the points of intersections of the paths $q_2$ and $q_2'$ with the line $X+Y = n-i+n_1+n_2+\cdots +n_{2l+1}$ be $R$ and $R'$, respectively. 
        Then 
		$\mathtt{IP}_{q_{_2},q_2'}(X+Y = n-i+n_1+n_2+\cdots +n_{2l+1} ~\&~ X=k)$ contains the points on line $RQ'$ except the point $R$.
		Now, using  $X(A_{l_2}) + Y(A_{l_2})= n-i$ and given value of $k$, we have
		\begin{align*}
			Y(R) - Y(Q') 
			&= (Y(A_{l_2})+n_2 +n_4 + \cdots+ n_{2l})- (n-i+n_1+n_2+\cdots+ n_{2l+1}-k)\\
			&= Y(A_{l_2}) - (n-i) -n_1 -n_3 - \cdots -n_{2l+1} +k  \\
			&= -X(A_{l_2}) -n_1 -n_3 - \cdots -n_{2l+1} + X(A_{l_1}) + n_1 + n_3 + \cdots + n_{2l-1} + n_{2l+1}'  \\
			&= X(A_{l_1}) -X(A_{l_2}) -n_{2l+1}  + n_{2l+1}'\\
			&= l_1 - l_2 -n_{2l+1}''.
		\end{align*}
        Thus,  
		$\mathtt{IP}_{q_{_2},q_2'}(X+Y = n-i+n_1+n_2+\cdots n_{2l+1}~ \&~ X=k) =$
        \begin{center}
        \small
            $ \{ (X(A_{l_2}) + n_1+n_3 +\cdots + n_{2l+1} + s_{2l+1}, Y(A_{l_2})+n_2 +n_4 + \cdots+ n_{2l} - s_{2l+1}) : 1 \leq s_{2l+1} \leq l_1- l_2 - n_{2l+1}''  \}$.
        \end{center}
        This proves $(ii)$.
        
        Now, we compute $\mathtt{IP}_{q_{_2},q_2'}(X+Y = n-i+n_1+n_2+\cdots n_{2l+2}~ \&~ X=k)$.
        Let $X+Y = n-i+n_1+n_2+\cdots n_{2l+2}$ intersect $X=k$ and $q_2$ at $Q''$ and $R''$, respectively. Then,
        $Y(R'')-Y(Q'') = Y(R)+ n_{2l+2} - (Y(Q') + n_{2l+2}) = Y(R)-Y(Q') = l_1-l_2-n_{2l+1}''$. 
        Thus, $\mathtt{IP}_{q_{q_2},q_2'}(X+Y = n-i+n_1+n_2+\cdots + n_{2l+2} ~\&~ X=k) = $
        \begin{center}
            \small
            $\{ (X(A_{l_2}) + n_1+n_3 +\cdots + n_{2l+1} + t_{2l+2}, Y(A_{l_2})+n_2 +n_4 + \cdots+  n_{2l+2} - t_{2l+2}) : 1 \leq t_{2l+2} \leq l_1- l_2 - n_{2l+1}''  \}$.
        \end{center}
        This proves $(iii)$.

        Let $s$ be an odd integer where $ 3 \leq s \leq 2r-2l-3$. We now find $\mathtt{IP}_{q_{_2},q_2'}(X+Y = n-i+  \sum_{i'=1}^{2l+s}n_{i'}  ~\&~ X=k) $.
            Let $X+Y = n-i+  \sum_{i'=1}^{2l+s}n_{i'} $ intersect $q_2$ and $X=k$ at $R^{(s)}$ and $Q^{(s)}$, respectively (see Figure \ref{fig1231er}).
            Then,
            \begin{align*}
                &Y(R^{(s)}) - Y(Q^{(s)}) \\
                &= (Y(A_{l_2})+n_2 +n_4 + \cdots+ n_{2l+s-1}) - (n-i+n_1+n_2+\cdots + n_{2l+s}-k)\\
                & = Y(A_{l_2}) - (n-i) - n_1 - n_3 - \cdots -n_{2l+s} +k\\
                & = Y(A_{l_2}) - (n-i) - n_1 - n_3 - \cdots -n_{2l+s} +
                 X(A_{l_1}) + n_{1} + n_{3} + \cdots + n_{2l-1} + n_{2l+1}'\\
                & = l_1 -l_2 -n_{2l+1}'' - n_{2l+3} - n_{2l+5} - \cdots - n_{2l+s}.
            \end{align*}

            Thus, 
            $\mathtt{IP}_{q_{_2},q_2'}(X+Y = n-i+n_1+n_2+\cdots + n_{2l+s} ~\&~ X=k) = $
            \begin{center}
                \small 
                 $\{ (X(A_{l_2}) +n_1+n_3 +\cdots + n_{2l+s} + s_{2l+s}, Y(A_{l_2})+n_2 +n_4 + \cdots + n_{2l+s-1} - s_{2l+s}) : 1 \leq s_{2l+s} \leq l_1- l_2 - n_{2l+1}'' - n_{2l+3} -n_{2l+5} - \cdots -n_{2l+s} \}$.
            \end{center}

            Now, we find  
			$\mathtt{IP}_{q_{_2},q_2'}(X+Y = n-i+  \sum_{i'=1}^{2l+s+1}n_{i'}  ~\&~ X=k) $.
             Let $X+Y = n-i+  \sum_{i'=1}^{2l+s+1}n_{i'} $ intersect $q_2$ and $X=k$ at $R^{(s+1)}$ and $Q^{(s+1)}$, respectively.

             Then $Y(R^{(s+1)}) - Y(Q^{(s+1)}) = Y(R^{(s)}) +n_{2l+s+1} - (Y(Q^{(s)}) + n_{2l+s+1}) = l_1 -l_2 -n_{2l+1}'' - n_{2l+3} - n_{2l+5} - \cdots - n_{2l+s}$.
             Thus, 
             $\mathtt{IP}_{q_{_2},q_2'}(X+Y = n-i+n_1+n_2+\cdots + n_{2l+s+1} ~\&~ X=k) = $
             \begin{center}
                 \small
                  $\{ (X(A_{l_2}) +n_1+n_3 +\cdots + n_{2l+s} + t_{2l+s+1}, Y(A_{l_2})+n_2 +n_4 + \cdots + n_{2l+s+1} - t_{2l+s+1}) : 1 \leq t_{2l+s+1} \leq l_1- l_2 - n_{2l+1}'' - n_{2l+3} -n_{2l+5} - \cdots -n_{2l+s} \}$.
             \end{center}
            This proves $(iv)$.
        
       \end{proof}
	
\begin{figure}[htbp]
			\centering
				\begin{tikzpicture}
					
					\draw[step=0.5cm,gray,very thin] (1.5,0) grid (8.5,8);

                    \draw[line width=.2mm]  (4.5,5.5) -- (4.5,5.5) node[above,right] {$R^{(s)}$};
                    \draw[line width=.2mm]  (6.5,3.5) -- (6.5,3.5) node[right] {$Q^{(s)}$};
                    \draw[line width=.2mm]  (4.5,7.5) -- (4.5,7.5) node[above] {$R^{(s+1)}$};
                    \draw[line width=.2mm]  (6.5,5.5) -- (6.5,5.5) node[right] {$Q^{(s+1)}$};
					
					\draw[dotted, line width=0.6mm] (1.5,4) -- (1.5,5.5);
					\draw[line width=.5mm]  (1.5,5.5) -- (2.5,5.5) node[below] {};
					\draw[dotted, line width=0.6mm]  (2.5,5.5) -- (3.5,5.5) node[above] {};
					\draw[line width=.5mm]  (3.5,5.5) -- (4.5,5.5) node[below] {}; 
					\draw[ line width=.6mm]  (4.5,5.4) -- (4.5,6.5) node[left] {$b^{n_{2l+s+1}}$};
					\draw[dotted, line width=.6mm]  (4.5,6.5) -- (4.5,7) node[left] {};
					\draw[line width=.6mm]  (4.5,7) -- (4.5,7.5) node[left] {};
					\draw[dotted, line width=.6mm]  (4.5,7.5) -- (5.5,7.5) node[below] {};
					\draw[<-, line width=.3mm]  (2,5.6) -- (2,6.5) node[above] {path $q_2$};
                    \draw[line width=0.3mm] (3,5.5) -- (3,5.5) node[midway, below] {$a^{n_{2l+s}}$};
					
					\draw[dotted, line width=0.6mm] (5,0.5) -- (5,2);
					\draw[line width=.5mm]  (5,2) -- (6,2) node[below] {};
					\draw[dotted, line width=0.6mm]  (6,2) -- (7,2) node[below] {};
					\draw[line width=.5mm]  (7,2) -- (8,2) node[midway, below] {};
					\draw[line width=.6mm]  (8,2) -- (8,3) node[right] {$b^{n_{2l+s+1}}$};
					\draw[dotted, line width=.6mm]  (8,3) -- (8,3.5) node[right] {};
					\draw[line width=.6mm]  (8,3.5) -- (8,4) node[below] {};
					\draw[dotted, line width=.6mm]  (8,4) -- (9,4) node[below] {};
                    \draw[line width=0.3mm] (5.5,2) -- (5.5,2) node[midway, above] {$a^{n_{2l+s}}$};
					\draw[<-, line width=0.3mm] (4.9,1.5) -- (4,1.5) node[left] {path $q_2'$};
					
					\draw[dotted, line width=.6mm]  (6.5,0) -- (6.5,1) node[right] {};
					\draw[line width=.5mm]  (6.5,1) -- (6.5,2.5) node[right] {};
					\draw[dotted, line width=.6mm]  (6.5,2.5) -- (6.5,3) node[right] {};
					\draw[line width=.6mm]  (6.5,3) -- (6.5,4.5) node[right] {};
					\draw[dotted, line width=.6mm]  (6.5,4.5) -- (6.5,5) node[right] {};
					\draw[line width=.5mm]  (6.5,5) -- (6.5,6.5) node[right] {$X=k$};
					\draw[dotted, line width=.6mm]  (6.5,6.5) -- (6.5,7.5) node[right] {};
					
					\draw[line width=.2mm]  (4.5,5.5) -- (5.5,4.5) node[below] {};
					\draw[dotted, line width=.6mm]  (5.5,4.5) -- (6,4) node[below] {};
					\draw[line width=.2mm]  (6,4) -- (8.5,1.5) node[below] {};
                    \draw[line width=.2mm]  (8.5,1.5) -- (8.5,1.5) node[right] {$X+Y = n-i+  \sum_{i'=1}^{2l+s}n_{i'} $};

					\draw[line width=.2mm]  (4.5,7.5) -- (5.5,6.5) node[below] {};
					\draw[dotted, line width=.6mm]  (5.5,6.5) -- (6,6) node[below] {};
					\draw[line width=.2mm]  (6,6) -- (8,4) node[below] {};
                    \draw[->,line width=.2mm]  (7.5,4.7) -- (8.5,4.7) node[right] {$X+Y = n-i+  \sum_{i'=1}^{2l+s+1}n_{i'} $};

                    \fill[black] (5,5) circle (1.5pt);
		            \draw (5,5) circle (0.15cm);
                    \fill[black] (5.5,4.5) circle (1.5pt);
		            \draw (5.5,4.5) circle (0.15cm);
                    \fill[black] (6,4) circle (1.5pt);
		            \draw (6,4) circle (0.15cm);
                    \fill[black] (6.5,3.5) circle (1.5pt);
		            \draw (6.5,3.5) circle (0.15cm);

                    \fill[black] (5,7) circle (1.5pt);
		            \draw (5,7) circle (0.15cm);
                    \fill[black] (5.5,6.5) circle (1.5pt);
		            \draw (5.5,6.5) circle (0.15cm);
                    \fill[black] (6,6) circle (1.5pt);
		            \draw (6,6) circle (0.15cm);
                    \fill[black] (6.5,5.5) circle (1.5pt);
		            \draw (6.5,5.5) circle (0.15cm);

				\end{tikzpicture}
				\caption{Elements of $\mathtt{IP}_{q_{_2},q_2'}(X+Y = n-i+  \sum_{i'=1}^{2l+s}n_{i'} ~ \&~ X=k)$ and $\mathtt{IP}_{q_{_2},q_2'}(X+Y = n-i+  \sum_{i'=1}^{2l+s+1}n_{i'} ~ \&~ X=k)$ for some $3 \leq s \leq 2r-2l-3$.}
				\label{fig1231er}
		\end{figure}
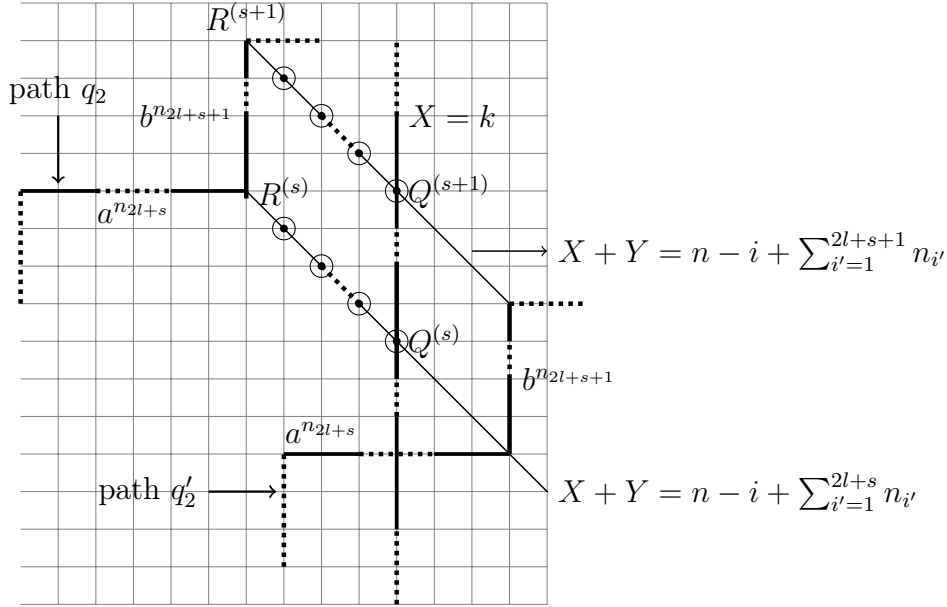

	\begin{proposition}\label{Prop1}
		For $|\gamma|\geq 3$, $l_1 - l_2 \geq 2$ and $Y(A_k)\neq Y(A_{l_2})$, the value of $ {}_{A_{l_2}}{{N}}_{A_{l_1}}^{A_k}$  is 
		\begin{center}
			\tiny
			$\displaystyle\sum_{\substack{E}} \sum_{\substack{D}} \sum_{\substack{C}} {n_1-1 \choose l_2 -l_1 +n_1 + s_1} {n_2 \choose t_2-s_1} {n_{2r-1}-1 \choose n_{2r-1} - t_{2r-2}} \displaystyle \prod_{r''=2}^{r-1} { {n_{2r''-1} \choose n_{2r''-1} + s_{2r''-1} - t_{2r''-2}} {n_{2r''} \choose t_{2r''} - s_{2r''-1}} }$,
		\end{center}
		in which $E = \{r \in \mathds{N}: 2 \leq r \leq \floor{\frac{i+j-n+1}{2}} \}$; $D$ consists of tuples $(n_1, n_2, \cdots, n_{2r-1})$ satisfying $\sum_{i'=1}^{2r-1}n_{i'} =|\gamma|$,  where $\sum_{j'=1}^{r}n_{2j'-1}=k-l_2$ and $\sum_{j'=1}^{r-1}n_{2j'} =  |\gamma| -k + l_2$; and
		$C$ is the collection of the following conditions: 
		\begin{enumerate}[label=\rm (\roman*)]
			\item $1 \leq s_1, t_2, s_3, t_4, \cdots, s_{2l-1}, t_{2l} \leq l_1 - l_2 - 1 $

			\item $1 \leq s_{2l+1}, t_{2l+2} \leq l_1 - l_2 - n_{2l+1}''$

			\item $1 \leq s_{2l+s}, t_{2l+s+1} \leq l_1 - l_2 - n_{2l+1}'' - n_{2l+3} - \cdots - n_{2l+s} \text{ for } 3 \leq s\leq 2r-2l-3$ 
		\end{enumerate}
		where $0 \leq l \leq r-1$, $n_{2l+1}=n_{2l+1}' + n_{2l+1}''$ with $n_{2l+1}'\geq 0 $ and $n_{2l+1}'' > 0$, and $l_1-l_2 = n_{2l+1}'' + n_{2l+3} +\cdots + n_{2r-1}$.
		
	\end{proposition}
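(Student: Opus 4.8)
The plan is to fix the word $w:=w_{A_{l_2}}^{A_k}(q_2)$, count the triplets $(q_2,p_2,q_2')\in{}_{A_{l_2}}\mathcal{N}_{A_{l_1}}^{A_k}$ realising it, and then sum over all admissible $w$. Since $Y(A_k)\neq Y(A_{l_2})$, the word $w$ contains at least one $b$, so by Lemma~\ref{lem1} and Remark~\ref{imprem231} we may write $w=a^{n_1}b^{n_2}a^{n_3}\cdots a^{n_{2r-1}}$ with $r\ge 2$ and each $n_t\ge 1$; from $\sum_t n_t=|\gamma|=i+j-n$ this forces $2\le r\le\lfloor\frac{i+j-n+1}{2}\rfloor$, which is the index set $E$. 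Once $w$ is fixed, the paths $q_2$ (from $A_{l_2}$) and $q_2'$ (from $A_{l_1}$) are uniquely determined by Remark~\ref{bijec}, and requiring $q_2$ to terminate at $A_k$ translates into $\sum_{j'=1}^{r}n_{2j'-1}=X(A_k)-X(A_{l_2})=k-l_2$ and $\sum_{j'=1}^{r-1}n_{2j'}=Y(A_k)-Y(A_{l_2})=|\gamma|-k+l_2$, that is, the tuple $(n_1,\dots,n_{2r-1})$ ranges exactly over $D$. It then remains, for each such tuple, to count the paths $p_2$ from $A_{l_1}$ to $A_k$ with $p_2\cap q_2=\{A_k\}$ and $p_2\cap q_2'=\{A_{l_1}\}$.

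Because $w$ begins and ends with $a$, the path $q_2'$ leaves $A_{l_1}$ horizontally and $q_2$ enters $A_k$ horizontally; hence every admissible $p_2$ must leave $A_{l_1}$ by an up-step (to $E_1$) and enter $A_k$ by an up-step (from $E_2$), so $p_2$ decomposes as the step $A_{l_1}\to E_1$, a monotone path $\pi$ from $E_1$ to $E_2$ disjoint from $q_2\cup q_2'$, and the step $E_2\to A_k$. Being monotone, $\pi$ meets each line $L'_s$ ($1\le s\le 2r-2$) in exactly one lattice point, which — to avoid $q_2$ and $q_2'$ — must lie strictly between their crossings of $L'_s$, i.e.\ it must be a point of $\mathtt{IP}_{q_2,q_2'}(s)$; one also checks, using $l_1-l_2\ge 2$, that $E_1$ and $E_2$ themselves lie strictly between $q_2$ and $q_2'$ on their lines. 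Conversely, Lemma~\ref{nbve} guarantees that for any choice $P_s\in\mathtt{IP}_{q_2,q_2'}(s)$ ($1\le s\le 2r-2$), every monotone path formed by concatenating a path $E_1\rightsquigarrow P_1$, paths $P_s\rightsquigarrow P_{s+1}$, and a path $P_{2r-2}\rightsquigarrow E_2$ is disjoint from $q_2\cup q_2'$. This sets up a bijection between admissible $p_2$ and such concatenations, so
\[
{}_{A_{l_2}}N_{A_{l_1}}^{A_k}=\sum_{r\in E}\sum_{(n_t)\in D}\ \sum_{\substack{P_s\in\mathtt{IP}_{q_2,q_2'}(s)\\ 1\le s\le 2r-2}} N_{E_1}^{P_1}\Big(\prod_{s=1}^{2r-3}N_{P_s}^{P_{s+1}}\Big)N_{P_{2r-2}}^{E_2},
\]
where the $N$'s obey the usual convention of vanishing when the target is not weakly northeast of the source, so incompatible choices of internal points contribute nothing.

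To make this explicit, I would substitute the coordinates of the internal points from Lemma~\ref{internal points}. Writing the generic point of $\mathtt{IP}_{q_2,q_2'}(s)$ through its parameter $s_s$ (for $s$ odd) or $t_s$ (for $s$ even), parts (i)--(iv) of that lemma say that these parameters run precisely over the intervals recorded in $C$: over $[1,l_1-l_2-1]$ for $1\le s\le 2l$, over $[1,l_1-l_2-n_{2l+1}'']$ for $s\in\{2l+1,2l+2\}$, and over the progressively smaller intervals of condition~(iii) in $C$ for $s>2l+2$, where $l$ is the unique index with $k=l_1+n_1+n_3+\cdots+n_{2l-1}+n_{2l+1}'$ ($0\le n_{2l+1}'<n_{2l+1}$, $n_{2l+1}''=n_{2l+1}-n_{2l+1}'$), for which also $l_1-l_2=n_{2l+1}''+n_{2l+3}+\cdots+n_{2r-1}$. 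The number of monotone paths from a point of $\mathtt{IP}_{q_2,q_2'}(s)$ to a point of $\mathtt{IP}_{q_2,q_2'}(s+1)$ is $\binom{n_{s+1}}{\Delta X}$, with $\Delta X$ read off from those coordinates; doing this for all $s$ yields the interior factors $\binom{n_2}{t_2-s_1}$ and, for $2\le r''\le r-1$, $\binom{n_{2r''-1}}{n_{2r''-1}+s_{2r''-1}-t_{2r''-2}}$ and $\binom{n_{2r''}}{t_{2r''}-s_{2r''-1}}$. The first and last segments lose one step each to the forced up-steps of $p_2$, so after using $k-l_2=\sum_{j'}n_{2j'-1}$ they contribute $N_{E_1}^{P_1}=\binom{n_1-1}{l_2-l_1+n_1+s_1}$ and $N_{P_{2r-2}}^{E_2}=\binom{n_{2r-1}-1}{n_{2r-1}-t_{2r-2}}$. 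Collecting these factors and summing over $C$, then over $D$, then over $E$, produces the claimed expression.

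The main obstacle I anticipate is the equivalence used in the second paragraph: proving in both directions that $p_2$ avoids $q_2$ and $q_2'$ exactly when it threads one point of $\mathtt{IP}_{q_2,q_2'}(s)$ on each $L'_s$, uniformly including the lines near $X=k$ where $|\mathtt{IP}_{q_2,q_2'}(s)|$ changes size, together with the bookkeeping of the crossover index $l$ and the precise parameter ranges via Lemma~\ref{internal points}; Lemma~\ref{nbve} only supplies one of the two directions.
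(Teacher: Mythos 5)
Your proposal follows essentially the same route as the paper's proof: decompose by the word $a^{n_1}b^{n_2}\cdots a^{n_{2r-1}}$ read off $q_2$ (giving $E$ and $D$), force $p_2$ through $E_1$, one internal point of each $\mathtt{IP}_{q_2,q_2'}(s)$, and $E_2$, invoke Lemma~\ref{nbve} for sufficiency and a monotonicity argument for necessity, and then read the binomial factors and the ranges in $C$ off Lemma~\ref{internal points}. The argument and the resulting bookkeeping match the paper's, including your (accurate) observation that the necessity direction is the part the cited lemmas do not fully cover.
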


	\begin{proof}

		
		For $r\in \mathds{N}$ and integers $n_1, n_2, \cdots, n_{2r-1} \geq 1$, let  $P = (n_1, n_2,  \cdots, n_{2r-1})$ be a $(2r-1)$-partition of $i+j-n$ such that $w_{A_{l_2}}^{A_k}(q_2) =a^{n_{1}} b^{n_{2}} a^{n_{3}}\cdots  b^{n_{2r-2}} a^{n_{2r-1}}$ for some lattice path $q_2$ from $A_{l_2}$ to $A_k$.
        Since $Y(A_k)\neq Y(A_{l_2})$, $r \geq 2$.
        Let $q_2'$ be a lattice path from $A_{l_1}$ corresponding to the word $w_{A_{l_2}}^{A_k}(q_2)$ (see Fig. \ref{fig3n1}).
		Using $2D$ geometry, we now compute
		$\mathtt{IP}_{q_{_2},q_{2}'}(X+Y = n-i+ \sum_{i'=1}^{s'} n_{i'}~ \&~ X = k)$ for each $1 \leq s' \leq 2r-2$ (see Lemma \ref{internal points}).
		In short, we denote this set as  $\mathtt{IP}_{q_{_2},q_{2}'}^{P}(s')$ where $1 \leq s' \leq 2r-2$. Furthermore, by geometric arguments, we observe that $\mathtt{IP}_{q_{_2},q_{2}'}^{P} (s')$ is nonempty for every $1 \leq s' \leq 2r-2$.


		
		Now we calculate all possible lattice paths $p_2$ that begin at $A_{l_1}$, proceed to $E_1$, then move sequentially through the points of  $\mathtt{IP}_{q_{_2},q_{2}'}^{P} (1)$,  $\mathtt{IP}_{q_{_2},q_{2}'}^{P} (2)$, $\ldots$,  $\mathtt{IP}_{q_{_2},q_{2}'}^{P} (2r-2)$, followed by $E_2$, and finally terminate at $A_k$ (see Fig. \ref{fig3n1}). 
		From Lemma \ref{nbve}, these paths $p_2$ intersect the paths $q_2'$ and $q_2$ only at $A_{l_1}$ and $A_k$, respectively.
		From these paths $p_2$, we now construct the set of all possible triplets $(q_2, p_2, q_2')$. Since this set depends on $q_2$, and $q_2$ depends on the partition $P$,
		we denote it by $G(P)$.

		Since $w_{A_{l_2}}^{A_k}(q_2) =a^{n_{1}} b^{n_{2}} a^{n_{3}}\cdots  b^{n_{2r-2}} a^{n_{2r-1}}$, we have $n_1 + n_2 + \cdots+ n_{2r-1} = i+j-n$ with  $ n_1 + n_3 + \cdots + n_{2r-1}=k-l_2$ and $ n_2 + n_4 + \cdots + n_{2r-2} = (i+j-n) -(k - l_2)$.
		Then there are $k-l_2-1 \choose r-1$  $i+j-n -k+l_2 -1\choose r-2$ distinct $(2r-1)$-partitions of the integer $i+j-n$
		where sum of odd-positioned terms is $k-l_2$ and sum of even-positioned terms is $i+j-n-k+l_2$. 
		Let $\mathscr{P}$ be the set of all such $(2r-1)$-partitions of $i+j-n$.
		Then for each $P = (n_1, n_2, \cdots, n_{2r-1}) \in \mathscr{P}$, we consider a path $q_2$ from $A_{l_2}$ to $A_k$ such that $w_{A_{l_2}}^{A_k}(q_2) = a^{n_1} b^{n_2} \cdots a^{n_{2r-1}}$ and compute $G(P)$.
		Consider $H(2r-1) = \bigcup_{P \in \mathscr{P}}   G(P)$. Now, $ 3 \leq 2r-1 \leq i+j-n$, i.e., $2 \leq r \leq \floor{\frac{i+j-n+1}{2}}$. Let $T= \bigcup_{r = 2}^{\floor{\frac{i+j-n+1}{2}}} H(2r-1)$.
		We now show that  $ {}_{A_{l_2}}{{\mathcal{N}}}_{A_{l_1}}^{A_k} =T$. 
		
		Let $(q_0, p_0, q_0') \in {}_{A_{l_2}}{\mathcal{N}}_{A_{l_1}}^{A_k}$. 
		Then $q_0$ is a path from $A_{l_2}$ to $A_k$,  $p_0$ is a path from $A_{l_1}$ to $A_k$, $q_0'$ is a path from $A_{l_1}$ corresponding to the word $ w_{A_{l_2}}^{A_k}(q_0)$, $q_0 \cap p_0 = \{ A_k\}$, and $p_0 \cap q_0' = \{A_{l_1}\}$.
		Now, $|w_{A_{l_2}}^{A_k}(q_0)|=|w_{A_{l_1}}^{A_k}(p_0)|=i+j-n \geq 3$. Then by Remark \ref{imprem231}, there exists a partition $P_0 = (n_1', n_2', \cdots, n_{2r'-1}')$ of $i+j-n$
		such that $w_{A_{l_2}}^{A_k}(q_0)  = a^{n_1'} b^{n_2'} a^{n_3'} \cdots a^{n_{2r'-1}'}$  
		and $w_{A_{l_1}}^{A_k}(p_0)$ starts and ends with $b$ (since $Y(A_k)\neq Y(A_{l_2})$, $r' \geq 2$). 
		Thus, the path $p_0$ starts at $A_{l_1} = (l_1, n-i-l_1)$, then it moves to $E_1$. Since $q_0 \cap p_0 = \{ A_k\}$, and $p_0 \cap q_0' = \{A_{l_1}\}$, the path $p_0$ subsequently passes through some points of $\mathtt{IP}_{q_{_0},q_0'}^{P_0}(1)$, $\mathtt{IP}_{q_{_0},q_0'}^{P_0}(2), \cdots$,
		$\mathtt{IP}_{q_{_0},q_0'}^{P_0}(2r'-2)$ and then $E_2$ and finally reaches $(k, j-k)$.
		Since $\mathtt{IP}_{q_{_0},q_0'}^{P_0}(1)$, $\mathtt{IP}_{q_{_0},q_0'}^{P_0}(2), \cdots$,
		$\mathtt{IP}_{q_{_0},q_0'}^{P_0}(2r'-2)$ are  nonempty, there always exists a route for $p_0$.
		Thus, $(q_0, p_0, q_0')$ is an element of $G(P_0)$. This implies $(q_0, p_0, q_0') \in T$. Thus $ {}_{A_{l_2}}{\mathcal{N}}_{A_{l_1}}^{A_k} \subseteq T$.

		Let $(q_2, p_2, q_2') \in T$. Then for some $3 \leq 2r''-1 \leq i+j-n$, $(q_2, p_2, q_2') \in H(2r''-1)$. Then for some $(2r''-1)$-partition ${P}''=(n_1'', n_2'', \cdots, n_{2r''-1}'')$ of $i+j-n$ where $n_1'' + n_3'' + \cdots + n_{2r''-1}''=k-l_2$ and $n_2'' + n_4'' + \cdots + n_{2r''-2}'' = i+j-n -k + l_2$, we have $(q_2, p_2, q_2') \in G({P}'')$. 
		So, $q_2$ is a path from $A_{l_2}$ to $A_k$, $q_2'$ is a path from $A_{l_1}$ corresponding to word $w_{A_{l_2}}^{A_k}(q_2)$, $p_2$ is a path from $A_{l_1}$ to $A_k$ such that $q_2 \cap p_2=\{A_k\}$ and $p_2 \cap q_2'=\{A_{l_1} \}$. Thus $(q_2, p_2, q_2') \in {}_{A_{l_2}}{\mathcal{N}}_{A_{l_1}}^{A_k}$. Hence,  $ T \subseteq {}_{A_{l_2}}{\mathcal{N}}_{A_{l_1}}^{A_k}$ so that $ T = {}_{A_{l_2}}{\mathcal{N}}_{A_{l_1}}^{A_k}$.

		Now we calculate the cardinality of $T$. As a first step, we compute the cardinality of $G({P})$. This is straightforward since the coordinates of the internal points are known by Lemma \ref{internal points}, and the formula for the number of lattice paths between two points is available.
        For $1 \leq i' \leq r-1$, let $B_{s_{2i'-1}}$ and $B_{t_{2i'}}$ denote arbitrary points of $\mathtt{IP}_{q_{_2},q_{2}'}^{P} (2i'-1)$ and $\mathtt{IP}_{q_{_2},q_{2}'}^{P} (2i')$, respectively. Then from Lemma \ref{internal points}, 
        
        $B_{s_{2i'-1}} = (X(A_{l_2})+ \sum_{j'=1}^{i'}n_{2j'-1} + s_{2i'-1}, Y(A_{l_2})+ \sum_{j'=1}^{i'-1}n_{2j'} - s_{2i'-1} )$, and
        
        $B_{t_{2i'}} = (X(A_{l_2}) + \sum_{j'=1}^{i'}n_{2j'-1}  + t_{2i'}, Y(A_{l_2})+ \sum_{j'=1}^{i'}n_{2j'} - t_{2i'})$\\
        with some appropiate values of $s_{2i'-1}$ and $t_{2i'}$.

         Now,  $N_{A_{l_1}}^{E_1}=1$,  $N_{E_1}^{B_{s_1}} = {n_1-1 \choose l_2 -l_1 +n_1 + s_1}$ and 
         $N_{B_{s_1}}^{B_{t_2}}   ={n_2 \choose t_2-s_1}$.
         Also, for any $2 \leq j' \leq r-1$, 
         $$N_{B_{t_{2j'-2}}}^{B_{s_{2j'-1}}}= {n_{2j'-1} \choose n_{2j'-1} + s_{2j'-1} - t_{2j'-2}} \text{ and }  N_{B_{s_{2j'-1}}}^{B_{t_{2j'}}} ={n_{2j'} \choose t_{2j'} - s_{2j'-1}}.$$

		 Furthermore,  $N_{B_{t_{2r-2}}}^{E_2}  = {n_{2r-1}-1 \choose n_{2r-1} - t_{2r-2}}$, and $N_{E_2}^{A_k}=1$.
        Thus
		$|G(P)| = $
		\begin{center}\small
			$\displaystyle\sum_{\substack{C}} {n_1-1 \choose l_2 -l_1 +n_1 + s_1} {n_2 \choose t_2-s_1} {n_{2r-1}-1 \choose n_{2r-1} - t_{2r-2}} \displaystyle \prod_{j'=2}^{r-1} { {n_{2j'-1} \choose n_{2j'-1} + s_{2j'-1} - t_{2j'-2}} {n_{2j'} \choose t_{2j'} - s_{2j'-1}} }. $
		\end{center}
		Here $C$ is the collection of the following conditions that arise in the determination of the coordinates of the internal points: 
		$1 \leq s_1, t_2, s_3, t_4, \cdots, s_{2l-1}, t_{2l} \leq l_1 - l_2 - 1$, ~
		$1 \leq s_{2l+1}, t_{2l+2} \leq l_1 - l_2 - n_{2l+1}''$,
		$1 \leq s_{2l+s}, t_{2l+s+1} \leq l_1 - l_2 - n_{2l+1}'' - n_{2l+3} - \cdots - n_{2l+s}$,
		$\text{ for odd integer } s ~(3 \leq s \leq 2r-2l-3)$ where $n_{2l+1}=n_{2l+1}' + n_{2l+1}''$ for some $0 \leq l \leq r-1$ with $n_{2l+1}' \geq 0$, $n_{2l+1}'' > 0$ and $l_1-l_2 = n_{2l+1}'' + n_{2l+3} +\cdots + n_{2r-1}$.

		Then we vary ${P}$ over all partitions in $\mathscr{P}$ to obtain the cardinality of $H(2r-1)$. This gives
		$ |H(2r-1)| =  \sum_{P \in \mathscr{P}} 
		|G(P)|$.
		Finally, we vary the value of $r$ from $2$ to $\floor{\frac{i+j-n+1}{2}}$ and compute $|T|$. This gives
		$|T|= \displaystyle \sum_{r = 2}^{\floor{\frac{i+j-n+1}{2}}} |H(2r-1)|.$
	\end{proof}

	\begin{proposition}\label{newcase}
		For $|\gamma|\geq 3$, $l_1 - l_2 \geq 2$ and $Y(A_k) = Y(A_{l_2})$, the value of $ {}_{A_{l_2}}{{N}}_{A_{l_1}}^{A_k}$  is ${|\gamma| -2 \choose k-l_1}$.		
	\end{proposition}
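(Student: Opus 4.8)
The plan is to use the hypothesis $Y(A_k)=Y(A_{l_2})$ to pin down $q_2$ and $q_2'$ completely, so that counting the triples in ${}_{A_{l_2}}\mathcal{N}_{A_{l_1}}^{A_k}$ reduces to counting a single family of lattice paths for $p_2$.

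First I would note that any lattice path from $A_{l_2}$ to $A_k$ has vertical displacement $Y(A_k)-Y(A_{l_2})=0$, so there is exactly one such path $q_2$, the purely horizontal one; comparing lengths gives $k-l_2=|\gamma|$ and $w_{A_{l_2}}^{A_k}(q_2)=a^{|\gamma|}$. Since $q_2'$ is, by definition, the path from $A_{l_1}$ spelling the same word, $q_2'$ is also horizontal, running from $A_{l_1}=(l_1,n-i-l_1)$ to $(l_1+|\gamma|,n-i-l_1)$. Thus $q_2$ lies on the line $Y=n-i-l_2$ and $q_2'$ on $Y=n-i-l_1$, and these are disjoint since $l_1\neq l_2$. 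Consequently every triple $(q_2,p_2,q_2')\in{}_{A_{l_2}}\mathcal{N}_{A_{l_1}}^{A_k}$ is determined by $p_2$ alone, and it remains to count the admissible $p_2$.

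Next I would convert the two incidence conditions into conditions on the first and last steps of $p_2$, using that lattice paths are $Y$-monotone. The path $p_2$ starts at $A_{l_1}$, which lies on the line carrying $q_2'$; since $Y$ is non-decreasing along $p_2$, the points of $p_2$ at height $n-i-l_1$ form the initial horizontal run of $p_2$, and this run meets $q_2'$ in more than $A_{l_1}$ unless the first step of $p_2$ is vertical. Symmetrically, $p_2$ ends at $A_k$ on the line carrying $q_2$, and once $p_2$ attains height $n-i-l_2$ it can never leave it (any later up-step would overshoot $A_k$); so the points of $p_2$ at that height form the terminal horizontal run of $p_2$, which meets $q_2$ in more than $A_k$ unless the last step of $p_2$ is vertical. (Here $|\gamma|\geq 3$ guarantees $p_2$ has at least three steps, so first and last are distinct, while $l_2\leq k$ and $l_1\leq l_1+|\gamma|$ ensure the extra candidate intersection points really lie on $q_2,q_2'$.) Hence the admissible $p_2$ are exactly the monotone paths from $A_{l_1}$ to $A_k$ whose first and last steps are vertical; deleting those two steps gives a bijection with the monotone paths from $E_1=(l_1,n-i-l_1+1)$ to $E_2=(k,n-i-l_2-1)$.

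Finally I would count these: $E_2-E_1$ has horizontal displacement $k-l_1$ and vertical displacement $(n-i-l_2-1)-(n-i-l_1+1)=l_1-l_2-2$, both nonnegative since $l_1-l_2\geq 2$ and $l_1\leq k=l_2+|\gamma|$; so their number is ${(k-l_1)+(l_1-l_2-2)\choose k-l_1}={k-l_2-2\choose k-l_1}={|\gamma|-2\choose k-l_1}$, which equals ${}_{A_{l_2}}{N}_{A_{l_1}}^{A_k}$. The step needing the most care is the third paragraph, namely executing the monotonicity reduction cleanly; it is also worth checking consistency with Lemma \ref{lem1}, and indeed $w_{A_{l_2}}^{A_k}(q_2)=a^{|\gamma|}=a\,a^{|\gamma|-2}a$ while $w_{A_{l_1}}^{A_k}(p_2)$ begins and ends with $b$, matching the forms $aza$ and $bz'b$ there.
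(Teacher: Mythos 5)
Your proof is correct and follows essentially the same route as the paper: identify the unique horizontal paths $q_2$ and $q_2'$, force the first and last steps of $p_2$ to be vertical so that $p_2$ is determined by an unrestricted lattice path from $E_1$ to $E_2$, and count these as $\binom{|\gamma|-2}{k-l_1}$. The only difference is that you derive the ``first and last steps vertical'' fact directly from the geometry of this special case (and justify why every path from $E_1$ to $E_2$ is admissible, since its heights lie strictly between the two horizontal lines), whereas the paper cites Remark \ref{imprem231}; your version is slightly more self-contained but not a genuinely different argument.
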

    \begin{proof}
        Since $Y(A_k) = Y(A_{l_2})$, $w_{A_{l_2}}^{A_k}=a^{|\gamma|}$. Then $k = l_2+|\gamma|$ and there exists exactly one lattice path, say $q_2$, from $A_{l_2}$ to $A_k$. 
        Let $q_2'$ be a lattice path from $A_{l_1}$ corresponding to $w_{A_{l_2}}^{A_k}$. 
        Now, we calculate all possible lattice paths $p_2$ from $A_{l_1}$ to $A_{k}$ such that $q_2 \cap p_2 = \{A_k\}$ and $q_2' \cap p_2 = \{ A_{l_1} \}$. By Remark \ref{imprem231}, we know that $p_2$ starts and end with $b$. So, $p_2$ passes through $E_1$ and $E_2$. Since  $Y(A_k) = Y(A_{l_2})$, the value of $ {}_{A_{l_2}}{{N}}_{A_{l_1}}^{A_k}$  is given by the total number of possible lattice path between $E_1$ and $E_2$.
        Therefore, 
         $ {}_{A_{l_2}}{{N}}_{A_{l_1}}^{A_k} = { Y(E_2) - Y(E_1)+ X(E_2)-X(E_1) \choose X(E_2)-X(E_1)} = { |\gamma|-2 \choose k-l_1  }$ (see Figure \ref{equal lk}). 
         \end{proof}

                    \begin{figure}[htbp]
					\centering
					\begin{tikzpicture}

                    \draw[dotted, ->, line width=0.15mm] (-2,-0.5) -- (7,-0.5) node[midway, above] {};
				    \draw[line width=.1mm] (7,0) -- (7,0) node[midway,below] {$X$};
				    \draw[dotted, ->, line width=0.15mm] (-1,-1) -- (-1,5) node[midway, above] {};
				    \draw[line width=.1mm] (-1,5) -- (-1,5) node[midway, above] {$Y$};
						
						\draw[line width=.5mm]  (-0.5,2.5) -- (2,0) node[below] {};
						\draw[line width=.5mm]  (-0.5,2.5) -- (-0.5,2.5) node[below] {$A_{l_2}$};
						\fill[red] (-0.5,2.5) circle (2pt);
						\draw[line width=.5mm]  (1.5,0.5) -- (1.5,0.5) node[below] {$A_{l_1}$};
						\fill[red] (1.5,0.5) circle (2pt);
						\draw[line width=.5mm]  (2,0) -- (2,0) node[below] {$L_1$};
						
						\draw[line width=.5mm]  (2.5,3.5) -- (6,0) node[below] {};
                        \fill[red] (3.5,2.5) circle (2pt);
						\draw[line width=.5mm]  (3.5,2.5) -- (3.5,2.5) node[above] {$A_{k}$};
						\draw[line width=.5mm]  (6,0) -- (6,0) node[below] {$L_2$};
						
						\draw[line width=.5mm]  (-0.5,2.5) -- (3.5,2.5) node[midway, above] {$q_2$};
						\draw[line width=.5mm]  (1.5,0.5) -- (5.5,0.5) node[midway, below] {$q_2'$};
						\fill[red] (5.5,0.5) circle (2pt);
						\draw[line width=.5mm]  (5.5,0.5) -- (5.5,0.5) node[above] {$A_{k'}$};

                        \draw[line width=.5mm]  (1.5,0.5) -- (1.5,1) node[midway, below] {};
                        \fill[red] (1.5,1) circle (2pt);
						\draw[line width=.5mm]  (1.5,1) -- (1.5,1) node[above] {$E_1$};

                        \draw[line width=.5mm]  (3.5,2.5) -- (3.5,2) node[midway, below] {};
                        \fill[red] (3.5,2) circle (2pt);
						\draw[line width=.5mm]  (3.5,2) -- (3.5,2) node[below] {$E_2$};

                        \draw[dotted, line width=0.5mm] (1.5,1) .. controls (2.5,1) .. (3.5,2) node[midway, above] {$p_2$};	
    
					\end{tikzpicture}
					\caption{Illustration of proof of Proposition \ref{newcase}}
					\label{equal lk}
			\end{figure}
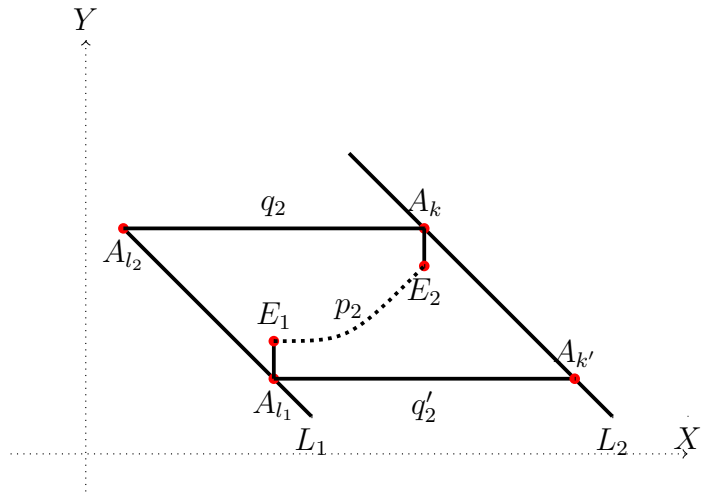

	\begin{example}
		We illustrate propositions \ref{Prop1} and \ref{newcase} with the help of following examples:
		\begin{enumerate}
			\item 
			Let $A_{l_1} = (4, 1)$, $A_{l_2} = (1, 4)$ and $A_k = (5, 5)$.    
			Let $q_2$ be a lattice path from $A_{l_2}$ to $A_k$ which begin and end with $a$. Consider  $w_{A_{l_2}}^{A_k}(q_2) =a^{n_{1}} b^{n_{2}} a^{n_{3}}\cdots   a^{n_{2r-1}}$ for some $r \geq 1$. Since $Y(A_k) \neq Y(A_{l_2})$, $r \geq 2$. 
            Also as $Y(A_k)-Y(A_{l_2})=1>0$, $|w_{A_{l_2}}^{A_k}(q_2)|_b=1$, i.e., $r=2$, i.e.,
			$w_{A_{l_2}}^{A_k}(q_2)$ must be in the form $a^{n_1} b^{n_2} a^{n_3}$ where $n_1+n_3=4$ and $n_2=1$.
			Now, the number of positive solutions of the equation $n_1+n_3=4$ is ${4-1 \choose 2-1} =3$. 
			So, there are $3$ partitions of $5$: let ${P}_1=(1, 1, 3)$, ${P}_2=(3, 1, 1)$ and ${P}_3=(2, 1, 2)$ be the partitions of $5$. 
			Then $w_{A_{l_2}}^{A_k}(q_2)$ is one of the following: 
			$aba^3$, $a^3 b a$ and $a^2 b a^2$. 
			\begin{itemize}
				\item When $w_{A_{l_2}}^{A_k}(q_2) = aba^3$. Then by Equation \ref{foundl}, we have $l=r-1=1$, $n_3'=0$ and $n_3''=3$. Also, $1 \leq s_1 \leq 2$,  $1 \leq t_2 \leq 2$. 
				Thus by Proposition \ref{Prop1}, $|G(P_1)| = 2$.

				\item When $w_{A_{l_2}}^{A_k}(q_2) = a^3ba$. Then by Equation \ref{foundl},
                we have $l=r-2=0$, $n_1'=1$ and $n_1''=2$. Also, $s_1 = 1$,  $t_2 = 1$. 
				Thus by Proposition \ref{Prop1}, $|G(P_2)| = 2$.

				\item When $w_{A_{l_2}}^{A_k}(q_2) = a^2ba^2$. Then, as $ l_1-l_2 = n_{2l+1}'' + n_{2l+3} +\cdots + n_{2r-1}$, we have $l=r-2=0$, $n_1'=1$ and $n_1''=1$. Also, $1 \leq s_1 \leq 2$,  $1 \leq t_2 \leq 2$. 
				Thus by Proposition \ref{Prop1}, $|G(P_3)| = 3$. 
			\end{itemize}
			Therefore, $h(3)=7$. This implies ${}_{A_{l_2}}{{N}}_{A_{l_1}}^{A_k}=7$.

            \item 
			Let $A_{l_1} = (5, 0)$, $A_{l_2} = (1, 4)$ and $A_k = (9, 4)$.    
			Let $q_2$ be a lattice path from $A_{l_2}$ to $A_k$. Since $Y(A_k) = Y(A_{l_2})$,  $q_2 = a^{8}$. Then $E_1 = (5,1)$ and $E_2 = (9,3)$. Then by Proposition \ref{newcase}, ${}_{A_{l_2}}{{N}}_{A_{l_1}}^{A_k}= 15$.
			
		\end{enumerate}
	\end{example}


    From propositions \ref{niwe}, \ref{Prop1} and \ref{newcase}, we have the following:
	\begin{theorem} \label{gammagreat5}
		For $|\gamma| = i+j-n \geq 3$, the value of $\mathcal{M}^{\textup{o}}(n)$  is 
		\begin{center}
			\small
			$ 2 \displaystyle \sum_j \sum_i \sum_{l_2=0}^{n-i-2} \sum_{l_1=l_2+2}^{n-i} \sum_{k=l_1}^{i+j-n+l_2}  \sum_{m=k'}^{n-j+k}   \left(N_{O}^{A_{l_1}, A_{l_2}}\right)   \left({}_{A_{l_2}}{{N}}_{A_{l_1}}^{A_k}\right)   \left(N_{A_k, A_{k'}}^{A_{m}}\right)$
		\end{center}
		where $ N_{O}^{A_{l_1}, A_{l_2}}$, ${}_{A_{l_2}}{{N}}_{A_{l_1}}^{A_k}$ and  $N_{A_k, A_{k'}}^{A_{m}}$ are given by propositions \ref{niwe}, \ref{Prop1} and \ref{newcase}.
	\end{theorem}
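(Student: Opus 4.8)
The plan is to obtain Theorem \ref{gammagreat5} by combining the reduction carried out in the ``Strategy of Computation'' paragraph with Propositions \ref{niwe}, \ref{Prop1} and \ref{newcase}; no new geometric input is required. Recall that, for the overlapping regime $i+j>n$, the strategy paragraph already established
$$\mathcal{M}^{\textup{o}}(n)= 2\sum_{j}\sum_{i}\sum_{l_2}\sum_{l_1>l_2}\sum_{k=l_1}^{l_2+i+j-n}\sum_{m=k'}^{k+n-j}\bigl(N_O^{A_{l_1},A_{l_2}}\bigr)\bigl({}_{A_{l_2}}{N}_{A_{l_1}}^{A_k}\bigr)\bigl(N_{A_k,A_{k'}}^{A_m}\bigr),$$
the leading $2$ coming from the symmetry between the branches $l_1>l_2$ and $l_1<l_2$. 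It remains only to restrict this expression to $|\gamma|=i+j-n\ge 3$ and to insert the three closed-form factors.

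First I would pin down the outer summation ranges. Since $|\gamma|\ge 3$ means $i+j-n\ge 3>1$, Lemma \ref{prop120} tells us that $||\alpha|_c-|\alpha'|_c|$ is not $1$; because $A_{l_1}$ and $A_{l_2}$ are distinct points of $L_1$ (so $l_1\neq l_2$) and $l_1-l_2=|\alpha|_a-|\alpha'|_a$, this forces $|l_1-l_2|\ge 2$. Hence in the branch $l_1>l_2$ the indices run as $0\le l_2\le n-i-2$ and $l_2+2\le l_1\le n-i$, matching the statement.

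Next I would evaluate ${}_{A_{l_2}}{N}_{A_{l_1}}^{A_k}$ by a dichotomy on $A_k$. Since $q_2$ is a lattice path from $A_{l_2}$ to $A_k$, we always have $Y(A_k)\ge Y(A_{l_2})$. If $Y(A_k)\ne Y(A_{l_2})$, then $w_{A_{l_2}}^{A_k}(q_2)$ contains a $b$, so in the notation of Remark \ref{imprem231} we have $r\ge 2$ and Proposition \ref{Prop1} applies (its hypotheses $|\gamma|\ge 3$ and $l_1-l_2\ge 2$ are now in force). If $Y(A_k)= Y(A_{l_2})$, then $w_{A_{l_2}}^{A_k}(q_2)=a^{|\gamma|}$ and Proposition \ref{newcase} gives ${}_{A_{l_2}}{N}_{A_{l_1}}^{A_k}={|\gamma|-2\choose k-l_1}$. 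As ${}_{A_{l_2}}{N}_{A_{l_1}}^{A_k}$ depends only on $(A_{l_1},A_{l_2},A_k)$ and the two cases are mutually exclusive and exhaustive, each term of the sum is evaluated by exactly one of the two propositions, with no double counting. For the two remaining factors I would quote Proposition \ref{niwe}, giving $N_O^{A_{l_1},A_{l_2}}=\frac{l_1-l_2}{n-i}{n-i\choose l_1}{n-i\choose l_2}$ and $N_{A_k,A_{k'}}^{A_m}=\frac{l_1-l_2}{n-j}{n-j\choose m-k}{n-j\choose m-k'}$; recall that $k'=k+l_1-l_2$ since $q_2'$ is the translate of $q_2$ by $A_{l_1}-A_{l_2}$, which is also why the $m$-sum runs from $k'$ to $k+n-j$. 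Substituting everything into the sixfold sum, and noting that the $l_1\leftrightarrow l_2$ symmetry producing the factor $2$ is insensitive to the value of $|\gamma|$, yields exactly the asserted formula.

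The only place where care is genuinely needed is this last step: one must confirm that the split $Y(A_k)=Y(A_{l_2})$ versus $Y(A_k)\ne Y(A_{l_2})$ covers every term and overlaps in none, that the degenerate branch still satisfies the hypothesis $l_1-l_2\ge 2$ of Proposition \ref{newcase}, and that the index bounds for $k$ and $m$ carried over from the strategy paragraph remain valid for all $i,j$ with $i+j>n$. The rest is direct substitution.
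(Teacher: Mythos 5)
Your proposal is correct and follows essentially the same route as the paper, which presents Theorem \ref{gammagreat5} as a direct consequence of the sixfold sum derived in the ``Strategy of Computation'' paragraph, the restriction $l_1-l_2\ge 2$ obtained from Lemma \ref{prop120}, and the substitution of the closed forms from Propositions \ref{niwe}, \ref{Prop1} and \ref{newcase}. Your explicit dichotomy on $Y(A_k)=Y(A_{l_2})$ versus $Y(A_k)\ne Y(A_{l_2})$ to decide which of Propositions \ref{Prop1} and \ref{newcase} evaluates ${}_{A_{l_2}}{N}_{A_{l_1}}^{A_k}$, and the observation that $k'=k+l_1-l_2$, merely make precise what the paper leaves implicit.
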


    We illustrate Theorem \ref{gammagreat5} with the help of following examples:
    	\begin{example}\label{gamma example 45}
		For $3 \leq|\gamma| \leq 4$, the values of $\mathcal{M}^{\textup{o}}(n)$  are as per the following:
		\begin{itemize}
         \small
					
			\item When $|\gamma|=3$, then $\mathcal{M}^{\textup{o}}(n)=$ 
			$$2 \sum_{i=5}^{n-2} \sum_{l_2=0}^{n-i-2} \sum_{l_1=2+l_2 }^{3+l_2} \sum_{k=l_1}^{l_2+3}  \sum_{m=k+l_1-l_2}^{n-j+k}   
			\frac{(l_1 - l_2)^2}{(n-i)(n-j)} {n-i \choose l_1} {n-i \choose l_2} {n-j \choose m-k} {n-j \choose m-k-l_1+l_2} {}_{A_{l_2}}{{N}}_{A_{l_1}}^{A_k}$$
			
			where 
			\[
			{}_{A_{l_2}}{{N}}_{A_{l_1}}^{A_k}  = 
			\begin{cases}
				1 & \text{if } l_1=2+l_2 ~ \& ~ k = l_1 \text{ or } l_1+1, \\
				1 & \text{if } l_1=3+l_2 ~\&~  k=l_1,\\
				0 & \text{ otherwise.} 
			\end{cases}
			\]
			
			\item 
			When $|\gamma|=4$, then $\mathcal{M}^{\textup{o}}(n)=$ 
			$$2 \sum_{i=6}^{n-2} \sum_{l_2=0}^{n-i-2} \sum_{l_1=2+l_2 }^{4+l_2} \sum_{k=l_1}^{l_2+4}  \sum_{m=k+l_1-l_2}^{n-j+k}  \frac{(l_1 - l_2)^2}{(n-i)(n-j)} {n-i \choose l_1} {n-i \choose l_2}  {n-j \choose m-k} {n-j \choose m-k-l_1+l_2} {}_{A_{l_2}}{{N}}_{A_{l_1}}^{A_k}  $$
			where 
			\[
			{}_{A_{l_2}}{{N}}_{A_{l_1}}^{A_k}  = 
			\begin{cases}
				1 & \text{if } (l_1=2+l_2 ~\& ~  k = l_1 \text{ or } l_1+2) \text{ or } (l_1=4+l_2 ~\& ~ k=l_1), \\
				2 & \text{if } (l_1=2+l_2 ~\& ~ k=l_1+1) \text{ or } (l_1=3+l_2 ~\& ~ k=l_1 \text{ or } l_1+1 ),\\
				0 & \text{otherwise.} 
			\end{cases}
			\]
		\end{itemize}

	\textbf{Explanation: }	Since we can move only horizontally or vertically along a lattice path,
		$\max\{l_1,l_2\} \leq k \leq \min\{l_1, l_2\} + |\gamma|$ and  
		$ \max\{k, k' \} \leq m \leq \min\{k, k'\} +n-j$ (see Fig. \ref{yytr}).
		We now compute the value of ${}_{A_{l_2}}{{N}}_{A_{l_1}}^{A_k}$ for  $3 \leq|\gamma| \leq 4$.
		
		\begin{itemize}
			
	\item   Let $|{\gamma}| =3$, i.e.,  $|\gamma'|=i+j-n =3$. By Lemma \ref{prop120}, $||{\alpha}|_c-|{\alpha'}|_c| \geq 2 ~\forall c \in \{a, b\}$.
			Let $|{\alpha}|_a-|{\alpha'}|_a \geq 2$. Then $l_1-l_2\geq 2$. Now, $l_1 \leq k \leq l_2 + 3$. This implies $2 \leq l_1 - l_2 \leq 3$. We now have the following cases:
			\begin{itemize}
				\item Let $l_1 - l_2 = 2$: Then $l_2 + 2 \leq k \leq l_2 + 3$. 
				
				When $k=l_2+2$, then $Y(A_k)=j-k=n-i+3-l_2-2 = n-i-l_2 +1$ and $A_k= (l_2+2, n-i-l_2+1)$ (see Fig. \ref{k=l_2+2_l_1=l_2+2}). Now, by Lemma \ref{lem1}, $\gamma=b x b$ and ${\gamma'}^R=aya$ for some $x, y \in \Sigma^+$. Since $X(A_{l_1})=X(A_k)$, $\gamma = bbb$. Since $Y(A_k)-Y(A_{l_2})=1$ and ${\gamma'}^R=aya$, we have ${\gamma'}R=aba$ (see Fig. \ref{k=l_2+2_l_1=l_2+2}).
				Thus ${}_{A_{l_2}}{{N}}_{A_{l_1}}^{A_k} =1$.
				
				 When $k=l_2+3$, then $Y(A_k)=Y(A_{l_2})$ (see Fig. \ref{k=l_2+3_l_1=l_2+2}). Then by Proposition \ref{newcase} we have ${}_{A_{l_2}}{{N}}_{A_{l_1}}^{A_k} =1$.

				\item Let $l_1 - l_2 =3$: Then $l_2 + 3 \leq k \leq l_2 + 3$, i.e., $ k = l_2 + 3$. Then $Y(A_k) = j-k = n-i+3 - l_2-3=n-i-l_2 = Y(A_{l_2})$ and $A_k=(l_2+3, n-i-l_2) = (l_1, n-i-l_2)$ (see  Fig. \ref{k=l_2+3_l_1=l_2+3}).
				Then from Fig. \ref{k=l_2+3_l_1=l_2+3}, ${}_{A_{l_2}}{{N}}_{A_{l_1}}^{A_k} =1$.
			\end{itemize}
			Thus the value of $\mathcal{M}^{\textup{o}}(n)$ for $|\gamma|=3 $ is
			\begin{center}
				\tiny
				$2 \displaystyle \sum_{i=5}^{n-2} \sum_{l_2=0}^{n-i-2} \sum_{l_1=2+l_2 }^{3+l_2} \sum_{k=l_1}^{l_2+3}  \sum_{m=k+l_1-l_2}^{n-j+k}   
				\frac{(l_1 - l_2)^2}{(n-i)(n-j)} {n-i \choose l_1} {n-i \choose l_2}   {n-j \choose m-k} {n-j \choose m-k-l_1+l_2} {}_{A_{l_2}}{{N}}_{A_{l_1}}^{A_k}$
			\end{center}
			where 
			\[
			{}_{A_{l_2}}{{N}}_{A_{l_1}}^{A_k}  = 
			\begin{cases}
				1 & \text{if } l_1=2+l_2, ~ l_2+2\leq k \leq l_2+3 \\
				1 & \text{if } l_1=3+l_2,  k=l_2+3\\
				0 & \text{ otherwise} 
			\end{cases}
			\]
			Here note that $i \geq 5$ as $|\gamma|=3$ and $|\beta|\geq 2$. 
			
			\begin{figure}[htbp]
				\centering
				\begin{minipage}{0.3\textwidth}
					\centering
					\includegraphics[scale=0.5]{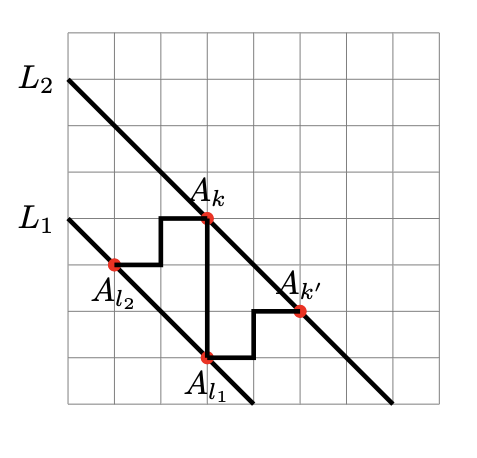}
					\caption{Illustration to the proof of Proposition \ref{Inticase} when $|\gamma|=3$, $k=l_2+2$,  $l_1-l_2=2$} 
					\label{k=l_2+2_l_1=l_2+2}
				\end{minipage}
				\begin{minipage}{0.3\textwidth}
					\centering
					\includegraphics[scale=0.5]{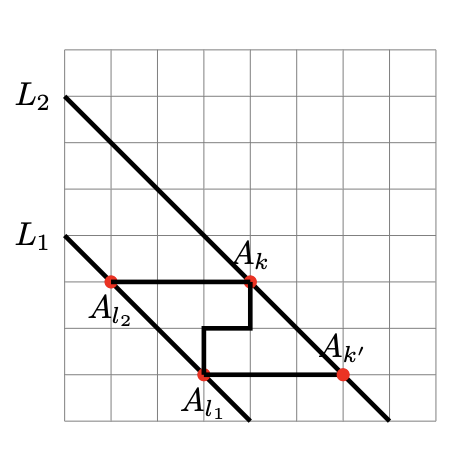}
					\caption{Illustration to the proof of Proposition \ref{Inticase} when $|\gamma|=3$, $k=l_2+3$,  $l_1-l_2=2$}
					\label{k=l_2+3_l_1=l_2+2}
				\end{minipage}
				\begin{minipage}{0.3\textwidth}
					\centering
					\includegraphics[scale=0.5]{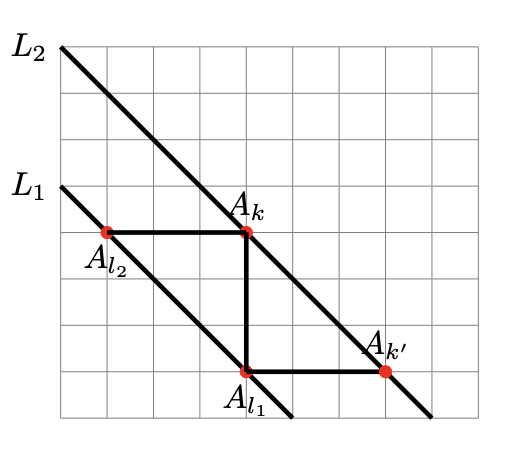}
					\caption{Illustration to the proof of Proposition \ref{Inticase} when $|\gamma|=3$, $k=l_2+3$,  $l_1-l_2=3$}
					\label{k=l_2+3_l_1=l_2+3}
				\end{minipage}
			\end{figure}

			\item  Let  $|{\gamma}| =4$, i.e., $|{\gamma'}|=i+j-n =4$.  By Lemma \ref{prop120}, $||{\alpha}|_c-|{\alpha'}|_c| \geq 2 ~\forall c \in \{a, b\}$. Let $|{\alpha}|_a-|{\alpha'}|_a \geq 2$, i.e., $l_1-l_2\geq 2$. Now, $l_1 \leq k \leq l_2 + 4$. This implies $2 \leq l_1 - l_2 \leq 4$. We now have the following cases:
			\begin{itemize}
				\item Let $l_1 - l_2 = 2$. Then  $l_2 +2 \leq k \leq l_2 + 4$.
				
				When $k=l_2+4$, then $Y(A_k)=j-k = n-i+4 -l_2-4 = n-i-l_2=Y(A_{l_2})$ (see Fig. \ref{0_k=l_2+4_l_1=l_2+2}). 
                Then by Proposition \ref{newcase}, ${}_{A_{l_2}}{{N}}_{A_{l_1}}^{A_k}=1$.
                
				
				When $k=l_2+3$, then  $Y(A_k)=j-k = n-i +4 -l_2-3 = n-i-l_2+1$ and $X(A_k)= k = l_2+3 =l_1-2+3=l_1+1$ (see figures \ref{0_k=l_2+3_l_1=l_2+2} and \ref{0_k=l_2+3_l_1=l_2+2_1}).  Now, by Lemma \ref{lem1}, $\gamma=b x b$ and ${\gamma'}^R=aya$ for some $x, y \in \Sigma^+$. Since $X(A_k)-X(A_{l_1}) =1$, either $\gamma=b ba b$ or $\gamma=b ab b$. When $\gamma=b ba b$, then ${\gamma'}^R=a ba a$ (see Fig. \ref{0_k=l_2+3_l_1=l_2+2}). When  $\gamma=b ab b$, then ${\gamma'}^R=a ab a$ (see Fig. \ref{0_k=l_2+3_l_1=l_2+2_1}). Thus,   
				${}_{A_{l_2}}{{N}}_{A_{l_1}}^{A_k}=2$.

				When $k=l_2+2$, then similar to the above, ${}_{A_{l_2}}{{N}}_{A_{l_1}}^{A_k}=1$ (see Fig. \ref{0_k=l_2+2_l_1=l_2+2}).

				\item Let $l_1 - l_2 = 3$. Then  $l_2 +3 \leq k \leq l_2 + 4$.
				
				When $k=l_2+3$, then similar to the above ${}_{A_{l_2}}{{N}}_{A_{l_1}}^{A_k}=2$ (see figures \ref{0_k=l_2+3_l_1=l_2+3} and \ref{0_k=l_2+3_l_1=l_2+3_1}).
				
				When $k=l_2+4$, then similar to the above ${}_{A_{l_2}}{{N}}_{A_{l_1}}^{A_k}=2$ (see figures \ref{0_k=l_2+4_l_1=l_2+3} and \ref{0_k=l_2+4_l_1=l_2+3_1}).

				\item Let $l_1 - l_2 = 4$.  Then $ k = l_2 + 4$.
				
				Then similar to the above, ${}_{A_{l_2}}{{N}}_{A_{l_1}}^{A_k}=1$ (see Fig. \ref{0_k=l_2+4_l_1=l_2+4}).
				
			\end{itemize}
			
			Thus the value of $\mathcal{M}^{\textup{o}}(n)$ for $|\gamma|=4$ is
			\begin{center}
				\tiny
				$2 \displaystyle \sum_{i=6}^{n-2} \sum_{l_2=0}^{n-i-2} \sum_{l_1=2+l_2 }^{4+l_2} \sum_{k=l_1}^{l_2+4}  \sum_{m=k+l_1-l_2}^{n-j+k}  \frac{l_1 - l_2}{n-i} {n-i \choose l_1} {n-i \choose l_2}    \frac{l_1 - l_2}{n-j} {n-j \choose m-k} {n-j \choose m-k-l_1+l_2} {}_{A_{l_2}}{{N}}_{A_{l_1}}^{A_k} $         
			\end{center}
			where 
			\[
			{}_{A_{l_2}}{{N}}_{A_{l_1}}^{A_k}  = 
			\begin{cases}
				1 & \text{if } l_1=2+l_2, ~  k = l_1= l_2+2 \\
				2 & \text{if } l_1=2+l_2,  k=l_1+1=l_2+3\\
				1 & \text{if } l_1=2+l_2,  k=l_1+2=l_2+4\\
				2 & \text{if } l_1=3+l_2,  k=l_1=l_2+3\\
				2 & \text{if } l_1=3+l_2,  k=l_1+1=l_2+4\\
				1 & \text{if } l_1=4+l_2,  k=l_1=l_2+4\\
				0 & \text{ otherwise} 
			\end{cases}
			\]
			Here note that $i \geq 6$ as $|\gamma|=4$ and $|\beta|\geq 2$. 
			
			\begin{figure}[htbp]
				\centering
				\begin{minipage}{0.3\textwidth}
					\centering
					\includegraphics[scale=0.5]{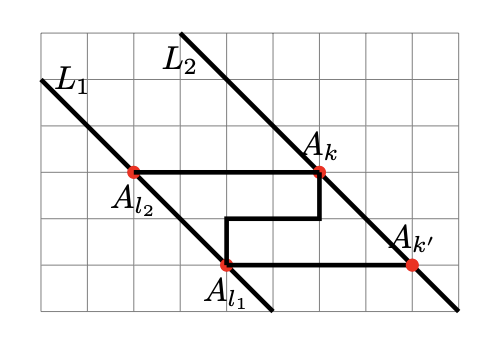}
					\caption{Illustration to the proof of Proposition \ref{Inticase} when $|\gamma|=4$, $k=l_2+4$,  $l_1-l_2=2$}
					\label{0_k=l_2+4_l_1=l_2+2}
				\end{minipage}
				\hspace{.2cm}
				\begin{minipage}{0.3\textwidth}
					\centering
					\includegraphics[scale=0.5]{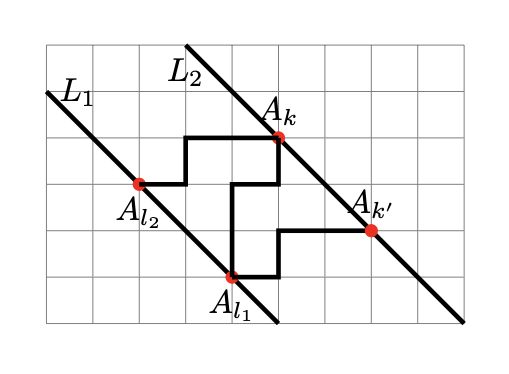}
					\caption{Illustration to the proof of Proposition \ref{Inticase} when $|\gamma|=4$, $k=l_2+3$,  $l_1-l_2=2$}
					\label{0_k=l_2+3_l_1=l_2+2}
				\end{minipage}
				\hspace{.2cm}
				\begin{minipage}{0.3\textwidth}
					\centering
					\includegraphics[scale=0.5]{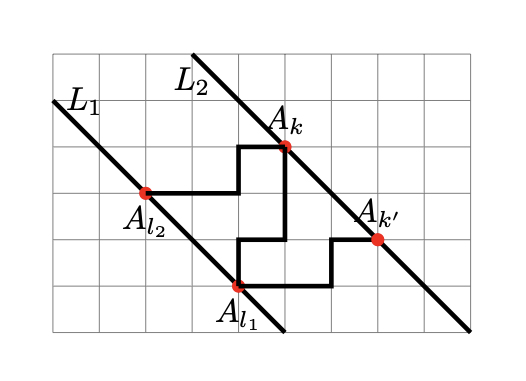}
					\caption{Illustration to the proof of Proposition \ref{Inticase} when $|\gamma|=4$, $k=l_2+3$,  $l_1-l_2=2$}
					\label{0_k=l_2+3_l_1=l_2+2_1}
				\end{minipage}
			\end{figure}
			\begin{figure}[htbp]
				\centering
				\begin{minipage}{0.3\textwidth}
					\centering
					\includegraphics[scale=0.5]{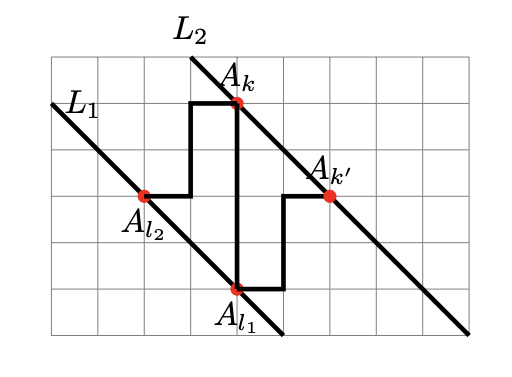}
					\caption{Illustration to the proof of Proposition \ref{Inticase} when $|\gamma|=4$, $k=l_2+2$, $l_1-l_2=2$}
					\label{0_k=l_2+2_l_1=l_2+2}
				\end{minipage}
				\hspace{.2cm}
				\begin{minipage}{0.3\textwidth}
					\centering
					\includegraphics[scale=0.5]{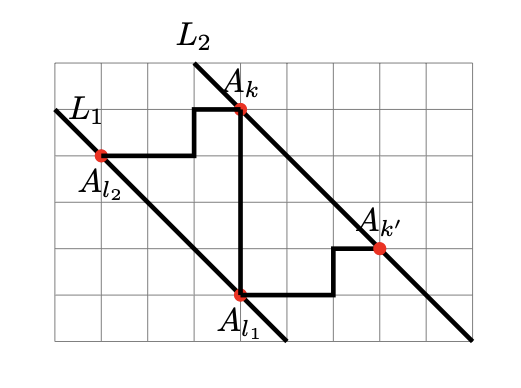}
					\caption{Illustration to the proof of Proposition \ref{Inticase} when $|\gamma|=4$, $k=l_2+3$, $l_1-l_2=3$}
					\label{0_k=l_2+3_l_1=l_2+3}
				\end{minipage}
				\hspace{.2cm}
				\begin{minipage}{0.3\textwidth}
					\centering
					\includegraphics[scale=0.5]{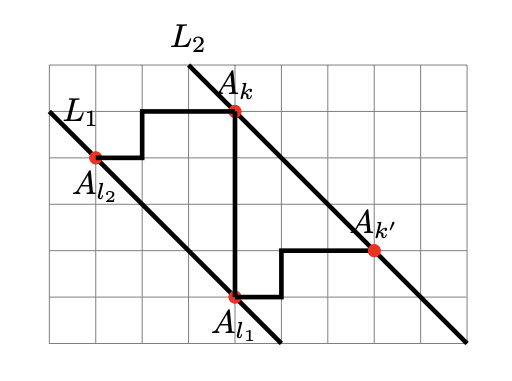}
					\caption{Illustration to the proof of Proposition \ref{Inticase} when $|\gamma|=4$, $k=l_2+3$, $l_1-l_2=3$}
					\label{0_k=l_2+3_l_1=l_2+3_1}
				\end{minipage}
			\end{figure}
			\begin{figure}[htbp]
				\centering
				\begin{minipage}{0.3\textwidth}
					\centering
					\includegraphics[scale=0.5]{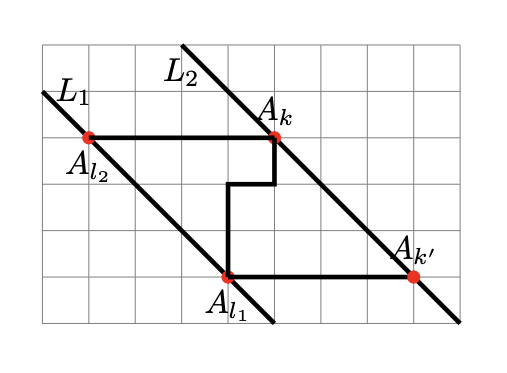}
					\caption{Illustration to the proof of Proposition \ref{Inticase} when $|\gamma|=4$, $k=l_2+4$,  $l_1-l_2=3$}
					\label{0_k=l_2+4_l_1=l_2+3}
				\end{minipage}
				\begin{minipage}{0.3\textwidth}
					\centering
					\includegraphics[scale=0.5]{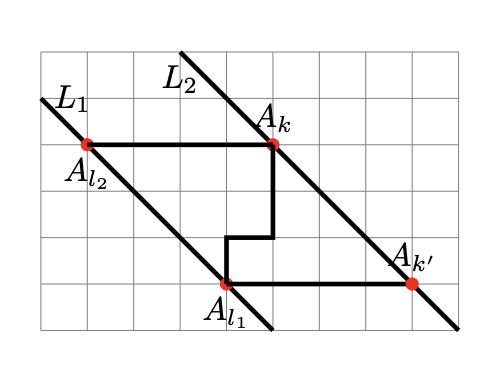}
					\caption{Illustration to the proof of Proposition \ref{Inticase} when $|\gamma|=4$, $k=l_2+4$,  $l_1-l_2=3$}
					\label{0_k=l_2+4_l_1=l_2+3_1}
				\end{minipage}
				\begin{minipage}{0.3\textwidth}
					\centering
					\includegraphics[scale=0.5]{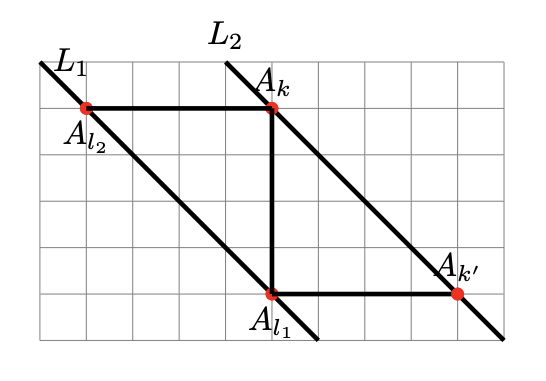}
					\caption{Illustration to the proof of Proposition \ref{Inticase} when $|\gamma|=4$, $k=l_2+4$, $l_1-l_2=4$}
					\label{0_k=l_2+4_l_1=l_2+4}
				\end{minipage}
			\end{figure}

		\end{itemize}
    \end{example}
	
	We have verified Example \ref{gamma example 45} through a computer program, and the results are given in Table \ref{examgamma45}.
	\begin{table}[h]
		\small
		\centering
		\begin{tabular}{|c|c|c|c|c|c|c|c|c|c|c|c|c|}
			 \hline
			 $n$ & 1 & 2 & 3 & 4 & 5 & 6 & 7 & 8 & 9 & 10 & 11 & 12 \\
			\hline
			$\mathcal{M}^{\textup{o}}(n)$ for $|\gamma|=3$  & 0 & 0 & 0 & 0 & 0 & 0 & 4 & 32 & 178 & 856 & 3820 & 16320   \\
			\hline
			$\mathcal{M}^{\textup{o}}(n)$ for $|\gamma|=4$  & 0 & 0 & 0 & 0 & 0 & 0 & 0 & 8 & 64 & 360 & 1760 & 8002   \\
			\hline
		\end{tabular}
		\caption{Values of $\mathcal{M}^{\textup{o}}(n)$, for $3 \le |\gamma| \le 4$  and $1 \leq n \leq 12$.}
		\label{examgamma45}
	\end{table}

	\section{Computing MAU pairs}

    Let $\overline{\mathcal{M}}(n)$  denote the number of mutually abelian-unbordered pairs of binary words $(u, v)$ where $|u|=|v|=n$. The definitions clearly show that $\overline{\mathcal{M}}(1)=4$. We now compute $\overline{\mathcal{M}}(n)$ for $n \geq 2$.
	For $n \geq 2$, let $\overline{\mathcal{M}}_{=}(n)$ denote the number of mutually abelian-unbordered pairs of binary words $(u, v)$ where $|u|=|v|=n$ and $ |u|_c = |v|_c$ for all $c \in \{a, b\}$. 
	For $n \geq 2$, let $\overline{\mathcal{M}}_{\neq}(n)$ denote the number of mutually abelian-unbordered pairs of binary words $(u, v)$ where $|u|=|v|=n$ and $ |u|_c \neq |v|_c$ for some $c \in \{a, b\}$. 
	Then for $n \geq 2$, $\overline{\mathcal{M}}(n) = \overline{\mathcal{M}}_{=}(n) + \overline{\mathcal{M}}_{\neq}(n)$. We now compute $\overline{\mathcal{M}}_{=}(n)$ and $\overline{\mathcal{M}}_{\neq}(n)$.

	\begin{proposition}\label{equalab}
		For $n \geq 2$, the value of $\overline{\mathcal{M}}_{=}(n)$ is 
		$$2 \sum_{i=1}^{n-1} \frac{1}{n-1} {n-1 \choose i} {n-1 \choose i-1}.$$
	\end{proposition}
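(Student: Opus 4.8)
The plan is to translate the mutually abelian-unbordered condition, for a pair $(u,v)$ with $u\sim_{\textup{abl}}v$, into a single non-intersection condition on two lattice paths, and then enumerate the resulting configurations by a Lindström–Gessel–Viennot count followed by a Narayana-number identity.

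First I would set up the lattice-path dictionary. Fix $i$ with $0\le i\le n$, put $B=(i,n-i)$, and recall from Remark~\ref{bijec} that $w\mapsto p_O(w)$ restricts to a bijection between binary words of length $n$ with exactly $i$ occurrences of $a$ and lattice paths from $O=(0,0)$ to $B$. If $u\sim_{\textup{abl}}v$ with $|u|_a=|v|_a=i$, then $u,u^R,v,v^R$ are all abelian equivalent. Since reversal preserves the abelian class, writing $y$ (a suffix of $v$) as $y^R$ (a prefix of $v^R$) and applying the second part of Remark~\ref{onb}, the pair $(u,v)$ has an external abelian-border of length $\ell$ (with $1\le\ell\le n-1$) if and only if $p_O(u)$ and $p_O(v^R)$ are at the same point after $\ell$ steps, and it has an internal abelian-border of length $\ell$ if and only if $p_O(u^R)$ and $p_O(v)$ are at the same point after $\ell$ steps. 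The key observation is that $p_O(u^R)$ is obtained from $p_O(u)$ by the point reflection $Z\mapsto B-Z$ (its vertex after $\ell$ steps is $B$ minus the vertex of $p_O(u)$ after $n-\ell$ steps), and likewise for $v$; hence $p_O(u^R)$ and $p_O(v)$ coincide after $\ell$ steps precisely when $p_O(u)$ and $p_O(v^R)$ coincide after $n-\ell$ steps. Consequently, for $u\sim_{\textup{abl}}v$, the pair $(u,v)$ has an internal abelian-border if and only if it has an external one, so $(u,v)$ is an MAU pair if and only if the paths $p=p_O(u)$ and $q=p_O(v^R)$ satisfy $p\cap q=\{O,B\}$ (two monotone paths share a lattice point only when they are there after the same number of steps, since after $\ell$ steps a path lies on $X+Y=\ell$). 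This yields $\overline{\mathcal{M}}_{=}(n)=\sum_{i=0}^{n}c_i$, where $c_i$ is the number of ordered pairs of lattice paths from $O$ to $(i,n-i)$ meeting only at $O$ and $B$.

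Next I would compute $c_i$. For $i\in\{0,n\}$ there is a unique such path, so $c_i=0$. For $1\le i\le n-1$, exactly one path in an admissible pair lies strictly below the other between the endpoints, so $c_i=2d_i$, where $d_i$ counts the admissible pairs with $p$ strictly below $q$; such a $p$ must begin with an East step and end with a North step, and such a $q$ must begin with a North step and end with an East step. Stripping these forced steps, $d_i$ equals the number of vertex-disjoint pairs consisting of a path from $(1,0)$ to $(i,n-i-1)$ and a path from $(0,1)$ to $(i-1,n-i)$, so by the reflection principle (equivalently Lindström–Gessel–Viennot; see, e.g., \cite{gessel1996counting}),
\[
d_i=N_{(1,0)}^{(i,n-i-1)}N_{(0,1)}^{(i-1,n-i)}-N_{(1,0)}^{(i-1,n-i)}N_{(0,1)}^{(i,n-i-1)}=\binom{n-2}{i-1}^{2}-\binom{n-2}{i}\binom{n-2}{i-2}.
\]
A routine factorial computation then gives
\[
\binom{n-2}{i-1}^{2}-\binom{n-2}{i}\binom{n-2}{i-2}=\binom{n-2}{i-1}^{2}\Bigl(1-\tfrac{(i-1)(n-1-i)}{i(n-i)}\Bigr)=\frac{1}{n-1}\binom{n-1}{i}\binom{n-1}{i-1},
\]
whence $\overline{\mathcal{M}}_{=}(n)=\sum_{i=1}^{n-1}2d_i=2\sum_{i=1}^{n-1}\frac{1}{n-1}\binom{n-1}{i}\binom{n-1}{i-1}$, as claimed.

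The hard part will be the first step: recognizing that the relevant pair of paths is $\bigl(p_O(u),p_O(v^R)\bigr)$ and checking, via the point-reflection symmetry, that the internal and external abelian-border conditions collapse into the single requirement $p\cap q=\{O,B\}$. Once that reduction is in place, the enumeration is entirely standard — a Lindström–Gessel–Viennot determinant together with the Narayana-number identity displayed above.
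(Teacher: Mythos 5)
Your proof is correct and follows essentially the same route as the paper: both reduce $\overline{\mathcal{M}}_{=}(n)$ to counting ordered pairs of lattice paths from $O$ to $(r,n-r)$ that meet only at their endpoints, and both arrive at the count $\tfrac{2}{n-1}\binom{n-1}{r}\binom{n-1}{r-1}$. The only difference is that you make explicit two things the paper leaves implicit or outsources — the point-reflection argument showing that for abelian-equivalent $u,v$ the internal and external border conditions collapse into the single non-intersection requirement on $\bigl(p_O(u),p_O(v^R)\bigr)$, and the derivation of the non-crossing pair count via a Lindstr\"om--Gessel--Viennot determinant and the Narayana identity rather than a citation of \cite{gessel1996counting} — both of which check out.
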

	\begin{proof}
		Let $(u, v)$ be a mutually abelian-unbordered pair of binary words such that $|u| = |v| = n \geq 2$ and $|u|_c = |v|_c$ for all $c \in \{ a, b \}$. Let $|u|_a = |v|_a = r $ and $|u|_b = |v|_b =n-r$. Here $ 1 \leq r \leq n-1$, otherwise $(u, v)$ is not mutually abelian-unbordered.
		Then the lattice paths $p_O(u)$ and $p_O(v^R)$ intersect the line $X+Y=n$ at the same point, say $B$ where $B=(r,n-r)$ (see Fig. \ref{unbordered_abelian_u,v}).
		Now, from \cite{gessel1996counting}, the number of ordered pairs of non-intersecting lattice paths between $(0, 0)$ and $(r, n-r)$ is $ \frac{2}{n-1} {n-1 \choose r} {n-1 \choose r-1}$. This gives the number of such pairs $(u, v)$ where $|u|_a = |v|_a = r$.
		Therefore, $\overline{\mathcal{M}}_{=}(n)$  is  $2 \displaystyle \sum_{r=1}^{n-1} \frac{1}{n-1} {n-1 \choose r} {n-1 \choose r-1}$.
	\end{proof}

	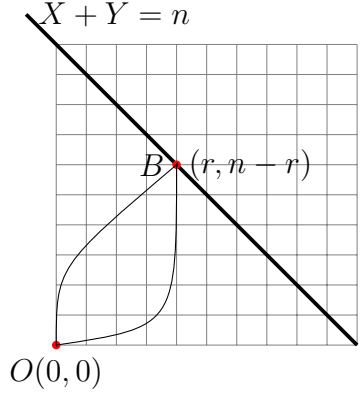
\begin{figure}[htbp]
		\centering
		\begin{tikzpicture}[scale=.8]
			\draw[step=0.5cm,gray,line width=.1mm] (0.5,0.5) grid (5.5,5.5);
			
			\draw[line width=.5mm]  (5.5,0.5) -- (0,6) node[right] {$X+Y=n$};
			
			\fill[red] (2.5,3.5) circle (2pt);
			\draw[line width=.5mm]  (2.5,3.5) -- (2.5,3.5) node[midway, right] {$(r, n-r)$};
			\draw[line width=.5mm]  (2.5,3.5) -- (2.5,3.5) node[left] {$B$};
			\fill[red] (0.5,0.5) circle (2pt);
			\draw[line width=.5mm]  (0.5,0.5) -- (0.5,0.5) node[midway, below] {$O(0, 0)$};

			\draw (0.5,0.5) .. controls (2.5,0.8) .. (2.5,3.5) node[midway, right] {};
			\draw (0.5,0.5) .. controls (.5,1.8) .. (2.5,3.5) node[midway, right] {};
		\end{tikzpicture}
		\caption{Illustration to the proof of Proposition \ref{equalab}}
		\label{unbordered_abelian_u,v}
	\end{figure}
	
	We have verified Proposition \ref{equalab} through a computer program and the results are given in Table \ref{tab:sample_table1213hgf}.
	\begin{table}[h]
		\centering
		\begin{tabular}{|c|c|c|c|c|c|c|c|c|c|c|c|c|}
			\hline
			$n$ & 2 & 3 & 4 & 5 & 6 & 7 & 8 & 9 & 10 & 11 & 12 \\
			\hline
			$\overline{\mathcal{M}}_{=}(n)$ & 2 & 4 & 10 & 28 & 84 & 264 & 858 & 2860 & 9724 &  33592 & 117572 \\
			\hline
		\end{tabular}
		\caption{Values of  $\overline{\mathcal{M}}_{=}(n)$ for $1 \leq n \leq 12$}
		\label{tab:sample_table1213hgf}
	\end{table}

	\begin{proposition}\label{poa1un}
		For $u, v \in \Sigma^n$ with $n \geq 2$, if $||u|_c - |v|_c| = 1 $ for some $ c \in \{a, b\}$, then $(u, v)$ can not be a mutually abelian-unordered pair of binary words.
	\end{proposition}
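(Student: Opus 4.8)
The plan is to argue by contradiction, reducing the whole problem to the behaviour of a single integer-valued walk attached to the lattice paths of $u$ and $v^R$. First comes a reduction: since $|u|=|v|=n$, the hypothesis $||u|_c-|v|_c|=1$ for one letter forces it for both, so we may take $c=a$; and $(u,v)$ is mutually abelian-unbordered if and only if $(v,u)$ is, because $(x,y)$ is an internal abelian-border of $(u,v)$ exactly when $(y,x)$ is an external abelian-border of $(v,u)$. Hence, after possibly interchanging $u$ and $v$, we may assume $|u|_a=|v|_a+1$.

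Now, for $0\le t\le n$, put $\phi(t)=|\text{Pref}_t(u)|_a-|\text{Suff}_t(v)|_a$. Writing $P_t,Q_t$ for the points reached after $t$ steps along $p_O(u)$ and $p_O(v^R)$ and using $|\text{Pref}_t(v^R)|_a=|\text{Suff}_t(v)|_a$, we get $\phi(t)=X(P_t)-X(Q_t)$; thus $\phi(0)=0$, $\phi(n)=|u|_a-|v|_a=1$, and $\phi(t)-\phi(t-1)\in\{-1,0,1\}$ since each path advances its $X$-coordinate by $0$ or $1$ at each step. The key observation is that, for $1\le t\le n-1$: (i) $\phi(t)=0$ iff $\text{Pref}_t(u)\sim_{\textup{abl}}\text{Suff}_t(v)$, i.e.\ iff $(u,v)$ has an external abelian-border of length $t$; and (ii) $\phi(t)=1$ iff $\text{Suff}_{n-t}(u)\sim_{\textup{abl}}\text{Pref}_{n-t}(v)$, i.e.\ iff $(u,v)$ has an internal abelian-border of length $n-t$. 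Since abelian equivalence of equal-length binary words is just equality of $a$-counts, (i) is immediate; for (ii) one substitutes $|\text{Suff}_{n-t}(u)|_a=|u|_a-|\text{Pref}_t(u)|_a$ and $|\text{Pref}_{n-t}(v)|_a=|v|_a-|\text{Suff}_t(v)|_a$ and checks that equality of these counts is exactly $\phi(t)=|u|_a-|v|_a=1$. It follows that if $(u,v)$ is mutually abelian-unbordered, then $\phi(t)\notin\{0,1\}$ for all $1\le t\le n-1$.

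The contradiction is then quick. As $n\ge 2$, the index $t=1$ is admissible, so $\phi(1)\notin\{0,1\}$; combined with $\phi(1)\in\{-1,0,1\}$ this forces $\phi(1)=-1$. A downward induction gives $\phi(t)\le-1$ for every $1\le t\le n-1$: if $\phi(t)\le-1$ with $t\le n-2$ then $\phi(t+1)\le\phi(t)+1\le0$, and $\phi(t+1)\ne0$ forces $\phi(t+1)\le-1$. Hence $\phi(n)\le\phi(n-1)+1\le0$, contradicting $\phi(n)=1$, so $(u,v)$ is not mutually abelian-unbordered.

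The one place that needs care is the bookkeeping behind equivalences (i) and (ii): aligning the length $t$ of an external border with the length $n-t$ of the matching internal border, and checking that the ``proper prefix / proper suffix'' requirements restrict attention precisely to $1\le t\le n-1$. Everything else — in particular the closing monotone-walk argument — is routine.
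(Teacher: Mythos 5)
Your proof is correct, and it takes a genuinely different route from the paper. The paper first reduces (as you do) to $||u|_a-|v|_a|=||u|_b-|v|_b|=1$, then handles $n=2$ by exhaustive enumeration and, for $n\ge 3$, splits into four cases according to the first and last letters of $u$ and $v$: three cases are dispatched by exhibiting an explicit abelian-border of length $n-1$ (e.g.\ if $u=au'a$, $v=bv'b$ then $au'\sim_{\textup{abl}} v'b$ is an external abelian-border), and the fourth case ($u=bu'b$, $v=av'a$) is settled by a geometric lattice-path intersection argument. You replace all of this with a single discrete intermediate-value argument for the walk $\phi(t)=|\text{Pref}_t(u)|_a-|\text{Suff}_t(v)|_a$: your equivalences (i) and (ii) are correct (abelian equivalence of equal-length binary words is equality of $a$-counts, and the substitution $|\text{Suff}_{n-t}(u)|_a=|u|_a-|\text{Pref}_t(u)|_a$ turns the internal-border condition into $\phi(t)=|u|_a-|v|_a=1$), the step size is indeed in $\{-1,0,1\}$, and the forced descent $\phi(1)=-1$, $\phi(t)\le -1$ for $t\le n-1$, $\phi(n)\le 0$ contradicts $\phi(n)=1$. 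Your symmetry reduction to $|u|_a=|v|_a+1$ is also legitimate, since $(x,y)$ is an internal abelian-border of $(u,v)$ iff $(y,x)$ is an external abelian-border of $(v,u)$, so mutual abelian-unborderedness is symmetric in the pair. What your approach buys is uniformity: no case split on boundary letters, no separate treatment of $n=2$, and no appeal to pictures; it is essentially the same lattice-path intuition as the paper's Theorem on abelian borders, but packaged as a one-dimensional walk so that the "paths must cross" step becomes a two-line induction. The paper's approach, by contrast, makes the offending border explicit in each case, which is slightly more informative but longer.
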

	\begin{proof}
		Let $||u|_c - |v|_c| = 1 $ for some $ c \in \{a, b\}$. Without loss of generality, let $c=a$. Then  $||u|_a - |v|_a| = 1 $. Since $|u|=|v|$,  $||u|_b - |v|_b| = 1 $. Thus, $||u|_c - |v|_c| = 1 $ for all $ c \in \{a, b\}$. Then we have the following cases:
		\begin{itemize}
			\item \textit{Case 1:} Consider $n=2$:
			
			Then $(u, v) \in \{ (ab, bb), (ab, aa), (ba, aa), (ba, bb), (aa, ab), (aa, ba), (bb, ab), (bb, ba)\}$. This implies $(u, v)$ can not be a mutually abelian-unordered pair of binary words.
			
			\item \textit{Case 2:} Consider $n \geq 3$ : 
			
			If possible, let there exists an MAU pair of words $(u, v)$ such that $||u|_a - |v|_a| = 1$ and $||u|_b - |v|_b| = 1$. Without loss of generality, let $|u|_a - |v|_a = 1$, and $|v|_b - |u|_b = 1$. Now, $(u, v)$ is a mutually abelian-unordered pair implies $\text{Pref}_1(u) \neq \text{Suff}_1(v)$ and $\text{Suff}_1(u) \neq \text{Pref}_1(v)$.
			Then we have the following cases:
			\begin{itemize}
				\item[(i)] If $u = a u' a$, $v = b v' b$ for some $u', v' \in \Sigma^+$, then as $|u|_a - |v|_a = 1$, $au' \sim_{\textup{abl}} v'b$, which is a contradiction as $(u, v)$ is a mutually abelian-unordered.
				
				\item[(ii)] If $u = a u' b$, $v = a v' b$ for some $u', v' \in \Sigma^+$, then as $|v|_b - |u|_b = 1$, $u'b \sim_{\textup{abl}} a v'$, which is a contradiction.
				
				\item[(iii)] If $u = b u' a$, $v = b v' a$ for some $u', v' \in \Sigma^+$, then as  $|u|_a - |v|_a = 1$, $b u' \sim_{\textup{abl}} v' a$, we have the contradiction. 
				
				\item[(iv)] Let $u = b u' b$, $v = a v' a$ for some $u', v' \in \Sigma^+$. Consider  $p_O(u)$ and $p_O(v^R)$ intersect the line $X+Y=n$ at the points $B$ and $B'$ respectively. Since $|u|_a - |v|_a = 1$, $X(B)-X(B')=1$. Then it can be observed that (see Figure \ref{unbornotexist}) the paths  $p_O(u)$ and $p_O(v^R)$ intersect after the origin $O$, which contradicts the fact that $(u, v)$ is an MAU pair.

\begin{figure}[h]
				\centering
				\begin{tikzpicture}
					\draw[step=0.5cm,gray,very thin] (0,0.5) grid (5,5);

					\draw[line width=.5mm]  (2,5) -- (2.5,4.5) node[below] {};
                    \draw[line width=.5mm]  (2.5,4.5) -- (4.5,2.5) node[below] {};
                    \draw[line width=.5mm]  (4.5,2.5) -- (5,2) node[below] {};
					\fill[red] (3,4) circle (2pt);
					\draw[line width=.5mm]  (3,4) -- (3,4) node[right] {$B'$};
                    \fill[red] (3.5,3.5) circle (2pt);
					\draw[line width=.5mm]  (3.5,3.5) -- (3.5,3.5) node[right] {$B$};
					\draw[line width=.5mm]  (2,5) -- (2,5) node[below,right] {$X+Y=n$};

					\fill[red] (0.5,1.5) circle (2pt);
					\draw[line width=.5mm]  (0.5,1.5) -- (0.5,1.5) node[below] {O};

                    \draw[line width=.5mm]  (0.5,1.5) -- (0.5,2) node[below] {};
                    \draw[line width=.5mm]  (3.5,3) -- (3.5,3.5) node[below] {};
                    \draw[dotted, line width=0.5mm] (0.5,2) .. controls (1,2.5) .. (3.5,3) node[midway, right] {};  
                    \draw[line width=.5mm]  (0.5,3) -- (0.5,3) node[below] {$p_O(u)$};

                    \draw[line width=.5mm]  (0.5,1.5) -- (1,1.5) node[below] {};
                    \draw[line width=.5mm]  (2.5,4) -- (3,4) node[below] {};
                    \draw[dotted,line width=0.8mm] (1,1.5) .. controls (2,2.5) .. (2.5,4) node[midway, right] {};  
                    \draw[line width=.5mm]  (2,2) -- (2,2) node[below] {$p_O(v^R)$};

				\end{tikzpicture}
				\caption{Illustration of proof of $(iv)$ point of Case $2$ of Proposition \ref{poa1un}. Here $p_O(u)$ is thin dotted line and $p_O(v^R)$ is the thick dotted line.}
				\label{unbornotexist}
			\end{figure}

			\end{itemize}
			
		\end{itemize}
		Thus, our assumption was wrong, i.e., there does not exist any mutually abelian-unordered pair of words $(u, v)$ such that $||u|_a - |v|_a| = 1$ and $||u|_b - |v|_b| = 1$.
	\end{proof}


		\begin{figure}[h!]
			\centering
			\begin{tikzpicture}[scale=1]
				
				\draw[dotted, ->, line width=0.15mm] (-2,0) -- (5,0) node[midway, above] {};
				\draw[line width=.1mm] (5,0) -- (5,0) node[midway,below] {$X$};
				\draw[dotted, ->, line width=0.15mm] (0,-0.6) -- (0,5) node[midway, above] {};
				\draw[line width=.1mm] (0,5) -- (0,5) node[midway, above] {$Y$};
				
				\draw[line width=.5mm]  (4,0) -- (0,4) node[right] {$X+Y=n$};
				\draw[line width=.5mm]  (0.5,-0.5) -- (-2,2) node[above] {$X=-Y$};
				
				\fill[red] (0,0) circle (2pt);
				\draw[line width=.5mm]  (0,0) -- (0,0) node[midway, below] {$O(0, 0)$};
				\fill[red] (-1.5,1.5) circle (2pt);
				\draw[line width=.5mm] (-1.5,1.5) -- (-1.5,1.5) node[midway, below] {$A$};
				\fill[red] (1,3) circle (2pt);
				\draw[line width=.5mm] (1,3) -- (1,3) node[midway, above] {$B_{r_2}$};     

				\draw (0,0) .. controls (0.5,1) .. (1,3) node[midway,above] {$p_O({v^R})$};
				\draw (-1.5,1.5) .. controls (-0.3,2.5) .. (1,3) node[midway, above] {$p_A(u)$};
				\draw (0,0) .. controls (2,1) .. (3,1) node[midway,below] {$p_O(u)$};
				\fill[red] (3,1) circle (2pt);
				\draw[line width=.5mm] (3,1) -- (3,1) node[midway, above] {$B_{r_1}$};

			\end{tikzpicture}
			\caption{Illustration to the proof of Proposition \ref{poa2un}}
			\label{fig12iolp}
		\end{figure}
		
		\begin{proposition}\label{poa2un}
			The number of mutually abelian-unbordered pair of words $(u, v)$ where $|u|=|v|=n$ and $||u|_c - |v|_c| \geq 2$ for all $c \in \{ a, b \}$ is 
			$$ 2  \sum_{r_2=0}^{n-2} \sum_{r_1=r_2+2}^{n} {}_{A}{{N}}_{{O}}^{B_{r_2}}$$
			where $ 2 \leq r_1-r_2 \leq n$, $A=(r_2-r_1, r_1-r_2)$, $O=(0, 0)$, and $B_{r_2}=(r_2, n-r_2)$.
		\end{proposition}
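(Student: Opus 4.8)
The plan is to convert, for each admissible ordered pair $(r_1,r_2)$, the mutually abelian-unbordered pairs $(u,v)$ with $|u|_a=r_1$, $|v|_a=r_2$ and $r_1-r_2\ge 2$ into triplets of lattice paths of the type counted by ${}_{A}\mathcal{N}_{O}^{B_{r_2}}$, and then to recover the global factor $2$ from the symmetry $(u,v)\mapsto(v,u)$. This swap preserves the mutually abelian-unbordered property, since an internal (resp.\ external) abelian-border of $(u,v)$ is, after interchanging the two coordinates, exactly an external (resp.\ internal) abelian-border of $(v,u)$; and it interchanges the pairs with $|u|_a>|v|_a$ with those having $|u|_a<|v|_a$ (note $||u|_a-|v|_a|\ge 2$ precludes $|u|_a=|v|_a$). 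So it suffices to count the pairs with $r_1:=|u|_a\ge|v|_a+2=:r_2+2$ and double; for such a pair, $0\le r_2\le n-2$, $r_2+2\le r_1\le n$, and $2\le r_1-r_2\le n$.

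Next I would translate the two border conditions into statements about lattice paths via Theorem~\ref{Lpth1} and Remark~\ref{onb}. Throughout, set $A=(r_2-r_1,\,r_1-r_2)$ (so $X(A)<0$), $O=(0,0)$, $B_{r_1}=(r_1,n-r_1)$, $B_{r_2}=(r_2,n-r_2)$. The path $p_O(u)$ runs from $O$ to $B_{r_1}$, $p_O(v^R)$ from $O$ to $B_{r_2}$, and $p_A(u)$ from $A$ to $B_{r_2}$ (its displacement is $(r_1,n-r_1)$), as in Fig.~\ref{fig12iolp}. For the external abelian-border: using $\text{Suff}_k(v)\sim_{\textup{abl}}\text{Pref}_k(v^R)$, a nonempty proper prefix of $u$ is abelian equivalent to a proper suffix of $v$ iff $\text{Pref}_k(u)\sim_{\textup{abl}}\text{Pref}_k(v^R)$ for some $1\le k\le n-1$, which by Remark~\ref{onb} holds iff $p_O(u)$ and $p_O(v^R)$ share a lattice point other than $O$ (they cannot agree after $n$ steps, since $B_{r_1}\ne B_{r_2}$); hence $(u,v)$ has no external abelian-border iff $p_O(u)\cap p_O(v^R)=\{O\}$. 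For the internal abelian-border: the last $k$ letters of $u$, read off $p_A(u)$, are abelian equivalent to $\text{Pref}_k(v)$ (read, reversed, off the last $k$ steps of $p_O(v^R)$) precisely when the lattice point lying $k$ steps before $B_{r_2}$ coincides on both paths; and since $p_A(u)$ and $p_O(v^R)$ both terminate at $B_{r_2}$, every common point of the two is such a point for the corresponding $k$, with $k=n$ excluded because it would force $A\in p_O(v^R)$, impossible as $X(A)<0$. Thus $(u,v)$ has no internal abelian-border iff $p_A(u)\cap p_O(v^R)=\{B_{r_2}\}$.

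Finally I would package these equivalences as a bijection. To an MAU pair $(u,v)$ with $|u|_a=r_1$, $|v|_a=r_2$ I associate the triplet $(p,q,p')=\big(p_A(u),\,p_O(v^R),\,p_O(u)\big)$: here $p$ goes from $A$ to $B_{r_2}$, $q$ from $O$ to $B_{r_2}$, $p'$ is the path from $O$ realizing $w_{A}^{B_{r_2}}(p)=u$, and $p\cap q=\{B_{r_2}\}$, $q\cap p'=\{O\}$ are exactly the two conditions just derived, so $(p,q,p')\in {}_{A}\mathcal{N}_{O}^{B_{r_2}}$. Conversely, from $(p,q,p')\in {}_{A}\mathcal{N}_{O}^{B_{r_2}}$ one reads back $u=w_{A}^{B_{r_2}}(p)$ and $v=\big(w_{O}^{B_{r_2}}(q)\big)^{R}$, which satisfy $|u|=|v|=n$, $|u|_a=r_1$, $|v|_a=r_2$, and are MAU by reversing the argument (the path $p'$ is automatically $p_O(u)$). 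This is a bijection, so the number of MAU pairs with these letter counts equals ${}_{A}N_{O}^{B_{r_2}}$; summing over $0\le r_2\le n-2$ and $r_2+2\le r_1\le n$ and doubling yields the stated formula, with ${}_{A}N_{O}^{B_{r_2}}$ then evaluated from the known count of such triplets. I expect the main difficulty to lie in the lattice-path translation of the internal abelian-border condition — correctly locating $A=(r_2-r_1,r_1-r_2)$ and verifying that a shared point of $p_A(u)$ and $p_O(v^R)$ corresponds to an internal abelian-border of admissible length (together with the analogous check for the external condition); the remainder is routine bookkeeping.
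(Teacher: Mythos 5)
Your proposal is correct and follows essentially the same route as the paper: the same placement of $A=(r_2-r_1,r_1-r_2)$, the same identification of an MAU pair with the triplet $\bigl(p_A(u),\,p_O(v^R),\,p_O(u)\bigr)\in {}_{A}\mathcal{N}_{O}^{B_{r_2}}$ via the two intersection conditions, and the same summation; your only (harmless) variation is deriving the factor $2$ from the swap $(u,v)\mapsto(v,u)$ where the paper simply invokes symmetry for the case $|v|_a-|u|_a\geq 2$.
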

		\begin{proof}
			Let $(u, v)$ be a pair of binary words such that $|u|=|v|=n$ and $||u|_c - |v|_c| \geq 2$ for all $c \in \{ a, b \}$.
			Let $|u|_a - |v|_a \geq 2$.
			Consider $|u|_a = r_1$, $|v|_a=r_2$ for some $r_1, r_2 \geq 0$. Then  $ r_1 -r_2 \geq 2$.  
			Since $|u|=|v|$, $|v|_b - |u|_b= |u|_a - |v|_a= r_1 -r_2 \geq 2$. Let $O$ be the origin and $A=(r_2-r_1, r_1-r_2)$ be a point on line $X+Y=0$. Now we draw the lattice path $p_A(u)$. Let $p_A(u)$ intersect $X+Y=n$ at a point $B_{r_2}$. Then as  $|u|_a=r_1$, $X(B_{r_2})= r_1-(r_1 - r_2)= r_2$, i.e., $B_{r_2}=(r_2, n-r_2)$. Now we draw $p_O(u)$. Let $p_O(u)$ intersect the line $X+Y=n$ at a point $B_{r_1}$. Then as $|u|_a=r_1$, $B_{r_1}=(r_1, n-r_1)$. Now we draw $p_O({v^R})$. Since $|v|=n$ and $|v|_a=r_2$,  the path $p_O({v^R})$ intersect $X+Y=n$ at $B_{r_2}$.
			
			We now show that $(u, v)$ is a mutually abelian-unbordered pair of binary words iff  $p_A(u) \cap p_O({v^R}) = \{B_{r_2} \}$ and $p_O({v^R}) \cap  p_O(u)= \{O\}$ (see Fig. \ref{fig12iolp}).
			
			From Remark \ref{onb} (\ref{onb2}), $(u, v)$ has no external abelian-border iff $p_O({v^R}) \cap p_O(u)= \{O \}$.
			We now prove that  $(u, v)$ has no internal abelian-border iff  $p_A(u) \cap p_O({v^R}) = \{B_{r_2} \}$.
			We actually prove its contrapositive statement, i.e.,  $(u, v)$ has an internal abelian-border iff $p_A(u)$ and $p_O(v^R)$ intersect at a point other than $B_{r_2}$.

			Let $(u, v)$ have an internal abelian-border. Then $u=u' u''$ and $v^R=v'v''$ for some $u', u'', v', v'' \in \Sigma^+$ such that $u'' \sim_{\textup{abl}} v''$. Let $p_O(v')$ intersect the line $X+Y=n-|v''|$ at a point $C$. Then $C=(|v'|_a, |v'|_b)$.
			Now, $|u|_a - |v|_a = |v|_b - |u|_b= r_1 -r_2$ implies $|u'|_a - |v'|_a = r_1-r_2$ and $|v'|_b - |u'|_b= r_1 -r_2$. 
			Then $|u'|_a = |v'|_a + r_1-r_2$, $|v'|_b = |u'|_b +  r_1 -r_2$.
			Since $A=(r_2-r_1, r_1-r_2)$, $p_A({u'})$ intersect the line $X+Y=n-|v''|$ at the point $C$. 
			Thus, $C \in p_A(u) \cap p_O(v^R)$. Since $u', u'', v', v'' \in \Sigma^+$, $p_A(u)$ and $p_O(v^R)$ intersect at a point other than $B_{r_2}$.

			Conversely, let $ C' \in p_A(u) \cap p_O(v^R)$ where $C' \neq B_{r_2}$. 
			Then as $B_{r_2} \in p_A(u) \cap p_O(v^R)$, 
			$u$ has a non-empty proper suffix $u_1$ and $v^R$ has a non-empty proper suffix $v_1$ such that $u_1 \sim_{\textup{abl}} v_1$. Now, $v_1 \in \text{Suff}(v^R)$ implies $v_1^R \in \text{Pref}(v)$. So, $(u_1, v_1^R)$ is an internal abelian-border of $(u, v)$. 
			
			Therefore, $(u, v)$ is a mutually abelian-unbordered pair of binary words iff then $p_A(u) \cap p_O(v^R) = \{B_{r_2} \}$ and $p_O(v^R) \cap p_O(u)= \{O \}$.
			Now, ${}_{A}{{N}}_{{O}}^{B_{r_2}}$ gives the number of all triplets $(p_A(u), p_O(v^R), p_O(u))$ such that $p_A(u) \cap p_O(v^R) = \{B_{r_2} \}$ and $p_O(v^R) \cap p_O(u)= \{O \}$. Therefore 
			the number of mutually abelian-unbordered pair of binary words $(u, v)$ where $|u|_a = r_1$, $|v|_a=r_2$ and  $ r_1 -r_2 \geq 2$ is  ${}_{A}{{N}}_{{O}}^{B_{r_2}}$. 
			Similar things happen when $|v|_a - |u|_a \geq 2$.
			
			Therefore, 
			the number of mutually abelian-unbordered pair of words $(u, v)$ where $|u|=|v|=n$ and $||u|_c - |v|_c| \geq 2$ for all $c \in \{ a, b \}$ is 
			$2 \displaystyle \sum_{r_2=0}^{n-2} \sum_{r_1=r_2+2}^{n} {}_{A}{{N}}_{{O}}^{B_{r_2}}$
			where $ 2 \leq r_1-r_2 \leq n$, $A=(r_2-r_1, r_1-r_2)$, $O=(0, 0)$, and $B_{r_2}=(r_2, n-r_2)$.

		\end{proof}

		\begin{theorem}\label{unequunbo}
			From propositions \ref{poa1un} and \ref{poa2un}, we have the following:
			$$\overline{\mathcal{M}}_{\neq}(n) =  2 \sum_{r_2=0}^{n-2} \sum_{r_1=r_2+2}^{n} {}_{A}{{N}}_{{O}}^{B_{r_2}}$$
			where $ 2 \leq r_1-r_2 \leq n$, $A=(r_2-r_1, r_1-r_2)$, $O=(0, 0)$, and $B_{r_2}=(r_2, n-r_2)$.
		\end{theorem}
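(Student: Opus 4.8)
The plan is to obtain Theorem \ref{unequunbo} as a direct consequence of Propositions \ref{poa1un} and \ref{poa2un}, after one elementary observation about the letter counts. First I would note that if $(u,v)$ is a pair of binary words with $|u|=|v|=n$ and $|u|_c \neq |v|_c$ for some $c \in \{a,b\}$, then from $|u|_a + |u|_b = n = |v|_a + |v|_b$ we get $|u|_a - |v|_a = -(|u|_b - |v|_b)$, hence $||u|_a - |v|_a| = ||u|_b - |v|_b|$; denote this common value by $d$, so that $d \geq 1$. Therefore the pairs counted by $\overline{\mathcal{M}}_{\neq}(n)$ split into two mutually exclusive and exhaustive families, those with $d = 1$ and those with $d \geq 2$.

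Next I would invoke Proposition \ref{poa1un} for the first family: when $d = 1$, that is $||u|_c - |v|_c| = 1$ for some (equivalently, every) $c \in \{a,b\}$, no such pair is mutually abelian-unbordered, so this family contributes $0$ to $\overline{\mathcal{M}}_{\neq}(n)$. For the second family I would invoke Proposition \ref{poa2un}: the MAU pairs with $||u|_c - |v|_c| \geq 2$ for all $c \in \{a,b\}$ number exactly
$$ 2 \sum_{r_2=0}^{n-2} \sum_{r_1=r_2+2}^{n} {}_{A}{{N}}_{{O}}^{B_{r_2}}, $$
with $A=(r_2-r_1,\,r_1-r_2)$, $O=(0,0)$, and $B_{r_2}=(r_2,\,n-r_2)$, and the range $2 \leq r_1 - r_2 \leq n$. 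Summing the contributions of the two families yields the asserted formula for $\overline{\mathcal{M}}_{\neq}(n)$.

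Since both propositions are already established, there is no real obstacle; the only point requiring care is the bookkeeping of the case split, i.e.\ checking that the condition ``$|u|_c \neq |v|_c$ for some $c$'' decomposes cleanly into the cases $d=1$ and $d\geq 2$, which is immediate from the identity $||u|_a - |v|_a| = ||u|_b - |v|_b|$ recorded above. Accordingly I would keep the proof to just a few lines, merely stitching the two propositions together through this dichotomy.
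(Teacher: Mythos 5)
Your proposal is correct and matches the paper's own (implicit) argument exactly: the paper gives no separate proof beyond citing Propositions \ref{poa1un} and \ref{poa2un}, and your dichotomy via the observation $||u|_a-|v|_a|=||u|_b-|v|_b|$ is precisely the bookkeeping the authors intend. Nothing further is needed.
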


    \section{Conclusion}\label{concl}
	
	
	
	For $u, v \in \Sigma^n$, it is clear that if $(u,v)$ is an external abelian-bordered pair, then $(v, u)$ is internal abelian-bordered pair, and vice versa. Since these are the pairs which are neither MAB pairs nor MAU pairs, the number of such pairs can be computed as consequence of this work as stated below:
	\begin{proposition}
		The number of external/internal abelian-bordered pairs $(u, v)$ with $|u| = |v| = n$ is 
		$\frac{2^{2n} - \mathcal{M}(n) -  \overline{\mathcal{M}}(n)}{2}$.
	\end{proposition}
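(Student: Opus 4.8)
The plan is to set up a four-way partition of the set of all $2^{2n}$ pairs of length-$n$ binary words and then exhibit a simple bijection between two of the blocks of this partition.

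First I would note that every pair $(u,v)$ with $u,v\in\Sigma^n$ lies in exactly one of four classes, according to whether it has an internal abelian-border and whether it has an external abelian-border: (i) it has both, i.e., it is an MAB pair; (ii) it has neither, i.e., it is an MAU pair; (iii) it has an internal but no external abelian-border, i.e., it is an internal abelian-bordered pair; (iv) it has an external but no internal abelian-border, i.e., it is an external abelian-bordered pair. These four descriptions are exactly the definitions recorded in Section~1, so the classes are pairwise disjoint and their union is all of $\Sigma^n\times\Sigma^n$. Counting the elements of each class then gives
\[
\#(\text{internal abelian-bordered}) + \#(\text{external abelian-bordered}) = 2^{2n} - \mathcal{M}(n) - \overline{\mathcal{M}}(n).
\]

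Next I would invoke the swap map $\sigma\colon(u,v)\mapsto(v,u)$ on $\Sigma^n\times\Sigma^n$. As observed just before the Remark in Section~1, $(x,y)$ is an internal abelian-border of $(u,v)$ if and only if $(y,x)$ is an external abelian-border of $(v,u)$; hence $(u,v)$ has an internal abelian-border if and only if $(v,u)$ has an external abelian-border, and symmetrically with the roles of the two borders interchanged. It follows that $(u,v)$ is an internal abelian-bordered pair if and only if $\sigma(u,v)=(v,u)$ is an external abelian-bordered pair. Since $\sigma$ is an involution, it restricts to a bijection between the set of internal abelian-bordered pairs and the set of external abelian-bordered pairs, so these two sets are equinumerous.

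Combining the displayed identity with this equality of cardinalities, each of the two counts equals $\bigl(2^{2n} - \mathcal{M}(n) - \overline{\mathcal{M}}(n)\bigr)/2$, which is the asserted value (the phrase ``external/internal abelian-bordered pairs'' refers to either one of these two equinumerous families; in particular the right-hand side is automatically an integer, being twice the number of internal abelian-bordered pairs). I do not expect a genuine obstacle here: the only points requiring care are verifying that the four classes really do partition $\Sigma^n\times\Sigma^n$ and that $\sigma$ matches up classes (iii) and (iv) precisely, both of which are immediate from the definitions and the already-noted correspondence between internal and external abelian-borders.
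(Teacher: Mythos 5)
Your proof is correct and follows exactly the paper's reasoning: the paper likewise partitions all $2^{2n}$ pairs into MAB, MAU, internal-only, and external-only classes, and uses the observation that $(u,v)$ is external abelian-bordered iff $(v,u)$ is internal abelian-bordered to conclude the two remaining classes are equinumerous. No differences worth noting.
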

	
	Further, the following questions arise naturally in this context:
	\begin{enumerate}
		\item Do the limits  $\displaystyle \lim _{n \rightarrow \infty} \frac{\mathcal{M}(n)}{2^{2n}}$ and $\displaystyle \lim _{n \rightarrow \infty} \frac{\overline{\mathcal{M}}(n)}{2^{2n}}$ exist?

		\item What is the number of mutually abelian-bordered pairs of binary words $(u, v)$ when $|u| \neq |v|$?
		
		\item What is the number of mutually abelian-bordered pairs of non-binary words? 
		
	\end{enumerate}

\bibliographystyle{abbrv}
\bibliography{ref.bib}


\end{document}